\pgfplotsset{compat=1.17}
\newcommand{\sgn}{\operatorname{sgn}}
\newcommand{\rank}{\operatorname{rank}}
\newcommand{\Isom}{\operatorname{Isom}}
\newcommand{\Full}{\operatorname{Full}}
\newcommand{\bR}{{\mathbb{R}}}
\title{Braced triangulations and rigidity}
\author{James Cruickshank}
\email{james.cruickshank@nuigalway.ie}
\address{Sch. of Math., Stats. \& Appl. Math.,  
NUI Galway, Ireland.}
\author[E. Kastis]{Eleftherios Kastis}
\email{l.kastis@lancaster.ac.uk}
\address{Dept.\ Math.\ Stats.\\ Lancaster University\\
Lancaster, LA1 4YF \\U.K. }
\author[D. Kitson]{Derek Kitson}
\email{derek.kitson@mic.ul.ie}
\address{Dept.\ Math.\ Comp. St.\\Mary Immaculate College, Thurles, Co.~Tipperary, Ireland.}
\author{Bernd Schulze}
\email{b.schulze@lancaster.ac.uk}
\address{Dept.\ Math.\ Stats.\\ Lancaster University\\
Lancaster, LA1 4YF \\U.K. }
\thanks{E.K. and D.K. supported by the Engineering and Physical Sciences Research Council [grant number EP/S00940X/1].}
\newtheorem{thm}{Theorem}
\newtheorem{lem}[thm]{Lemma}
\newtheorem{cor}[thm]{Corollary}
\newtheorem{ex}[thm]{Example}
\newtheorem{prop}[thm]{Proposition}
\newtheorem{rem}[thm]{Remark
}
\newtheorem*{lem*}{Lemma}
\subjclass{52C25, 05C10, 46B20}
\begin{document}

\maketitle

\begin{abstract}
    We consider the problem of finding an inductive construction,
    based on vertex splitting,
    of triangulated 
    spheres with a fixed number  of additional edges (braces). 
    We show that for any positive integer \( b \) there is such 
    an inductive construction of triangulations with \( b \) braces, 
    having finitely many base graphs.
    In particular we establish a bound for the maximum size of a base graph with $b$ braces that is linear in $b$.
    In the case that $b=1$ or $2$ we determine the list of base graphs explicitly. Using these results we show that doubly braced triangulations are (generically) minimally rigid in two distinct geometric contexts arising from a hypercylinder in $\mathbb{R}^4$ and a class of mixed norms on $\mathbb{R}^3$.
\end{abstract}

\section{Introduction}

A $d$-dimensional \emph{bar-joint framework} is a pair
$(G,q)$, where $G=(V,E)$ is a simple graph and $q\in (\mathbb{R}^d)^V$. 
We think of a framework as a collection of (fixed length) bars that are connected at their
ends by (universal) joints. Loosely speaking, such a  framework  is called \emph{rigid} if it cannot be deformed continuously into another non-congruent
framework while preserving the lengths of all bars. Otherwise, the framework is called \emph{flexible}. 

The rigidity and flexibility analysis of bar-joint frameworks and related constraint systems has a rich history which dates back to the work of Euler and Cauchy on the rigidity of polyhedra and to Maxwell's studies of mechanical linkages and trusses in the 19th century. Over the last few decades, the field of geometric rigidity theory has seen significant developments due to a plethora of new applications in pure mathematics and diverse areas of science, engineering and design. 
We refer the reader to \cite{WMatroid,SWHandbook}, for example, for summaries of key definitions and results.

Triangulations of the 2-sphere play an important role in the rigidity theory of bar-joint frameworks. Gluck has shown that generic realisations of these graphs as bar-joint frameworks in 3-dimensional Euclidean space are minimally rigid (\cite{Gluck}). Whiteley gave an independent proof of
Gluck's result by observing that certain vertex splitting moves preserve generic rigidity and are sufficient to construct all sphere triangulations from an easily understood base graph (\cite{WWsplitting}). 

In this paper we consider inductive constructions, based on vertex splitting, for triangulated 
    spheres with a fixed number  of additional edges (braces). 
Our first main result 
establishes a linear bound for the size of an irreducible (defined below) braced triangulation with $b$ braces (Theorem \ref{thm_main_bound}). An easy consequence is that for fixed $b$ there are only finitely many irreducible braced triangulations.
This is analogous to well known results on irreducible triangulations of surfaces by Barnette, Edelson, Boulch, Nakamoto, Colin de Verdi\'ere and others (\cites{BarnetteEdelson,boulch}). 

The case $b=1$ is quickly dealt with in Section \ref{s:unibrace} where we show that a triangular bipyramid with a brace connecting the two poles is the unique irreducible. 
In other words, every unibraced triangulation can be constructed from this single irreducible by a sequence of vertex splitting moves of a specific kind. 
Triangulated spheres with a single brace have previously been studied in \cite{Whi} in relation to redundant rigidity and more recently in \cite{CJ,JordanTanigawa} in relation to global rigidity. 
The case $b=2$ is more involved and in Section \ref{sec_b2}
we show that there are exactly five distinct irreducibles (see Figures \ref{fig_doubly_braced_oct}, \ref{fig_hex_disjoint}, \ref{fig_hex_adjacent}, \ref{fig_ireed_seven}, \ref{fig_ireed_seven_non}).

Two major new research strands in geometric rigidity are the rigidity analyses of bar-joint frameworks in Euclidean $3$-space whose joints are constrained to move on a surface (such as a cylinder or surface of revolution) \cite{jkn,jn,nop,nop1} and of bar-joint frameworks in non-Euclidean normed spaces \cite{dkn,kitson,kitlev,kit-pow,kit-sch}.

In Section \ref{sec:hypercylinder} 
we 
prove an analogue of Gluck's Theorem for bar-joint frameworks which are constrained to a hypercylinder in $\mathbb{R}^4$ (Theorem \ref{t:hypercylinder}). In this setting it is clear that
doubly braced triangulations have exactly the right number of edges to be minimally rigid and so our inductive construction from Section \ref{sec_b2} is a key ingredient in the proof. We introduce the appropriate rigidity matrix for frameworks on the hypercylinder, construct rigid placements for the base graphs and show that vertex splitting preserves rigidity on the hypercylinder.

In Section \ref{sec:mixednorms} we prove another analogue of Gluck's Theorem, this time for a class of {\em mixed norms} on $\mathbb{R}^3$ (Theorem \ref{t:norms}). In this setting we first need to establish some key geometric properties of the underlying normed spaces, in particular we characterise the isometries of the spaces. Our inductive construction for doubly braced triangulations is again key to the proof.

\section{Braced triangulations}

A sphere graph is a simple  graph with a fixed embedding in the $2$-sphere without edge crossings.
A {\em face} of a sphere graph is the topological closure of a connected component of the complement of the graph in the sphere. In particular a face contains its boundary.

A (sphere) triangulation, $P$, is a maximal sphere graph with at least 3 vertices.
We say that an edge $e \in E(P)$ is contractible in 
$P$ if it belongs to precisely two 3-cycles. In other words it does not belong to
any non-facial 3-cycle of $P$. In that case it follows that the simple graph $P/e$ obtained by contracting the edge $e$ is also
a triangulation (with the obvious embedding).

The following two lemmas are well known and will be useful for us. See, for example  \cite{DiwanKurhekar}.

\begin{lem}
    \label{lem_vertex_incident_good_edge}
    Suppose that \( P \) is a triangulation with at least \( 4 \) vertices
    and that \( F \) is a face of \( P \). 
    Each vertex of \( F \) is incident to a contractible edge of \( P \) that is not in \( F \). \qed
\end{lem}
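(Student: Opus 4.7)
The plan is to fix a vertex $v$ of $F$ and to analyse the cycle $C_v = w_1 w_2 \cdots w_d$ which forms the link of $v$ in $P$, where the face triangles incident to $v$ are precisely the $v w_i w_{i+1}$ (indices cyclic modulo $d \geq 3$). Relabel so that $F = v w_a w_{a+1}$. Since any $3$-cycle containing $v w_i$ has its third vertex among the neighbours of $v$, namely in $\{w_1,\ldots,w_d\}$, and since $w_i$ is automatically adjacent to $w_{i-1}$ and $w_{i+1}$ through the two face triangles at $v$, the edge $v w_i$ is contractible if and only if $w_i$ is not incident to any \emph{chord} of $C_v$, meaning an edge $w_i w_j$ of $P$ with $j\not\equiv i\pm 1$. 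The task is therefore to produce an index $i \notin\{a,a+1\}$ such that $w_i$ is incident to no chord.

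The key geometric input is that the closed disk $D$ of the sphere bounded by $C_v$ and not containing $v$ is itself a triangulated disk in $P$. Consequently the chords of $C_v$ in $P$ form a non-crossing family of diagonals of the $d$-gon $C_v$. When $d = 3$ no chords are possible and $w_{a+2}$ works, so one may assume $d \geq 4$ and extend this family, by adding further non-crossing diagonals as necessary, to a full triangulation $T$ of the $d$-gon.

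By the two-ears theorem for triangulated polygons, $T$ has at least two \emph{ears}, where an ear of $T$ is a vertex $w_i$ such that the diagonal $w_{i-1}w_{i+1}$ lies in $T$. An ear of $T$ is incident to no diagonal of $T$, and hence is incident to no chord of the original diagram (since the original chord family is a subset of the diagonals of $T$). The final observation is that no two cyclically consecutive vertices can be ears of $T$ simultaneously: if $w_i$ and $w_{i+1}$ were both ears, then $w_{i-1}w_{i+1}$ and $w_iw_{i+2}$ would both be diagonals of $T$ whose endpoints alternate cyclically, so they would cross. Hence at most one of $w_a, w_{a+1}$ is an ear of $T$, so some ear $w_i$ has $i \notin \{a, a+1\}$, and $v w_i$ is a contractible edge of $P$ not in $F$. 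Applying the argument in turn at each of the three vertices of $F$ completes the proof.

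The main conceptual point to get right is the translation between contractibility of $v w_i$ in $P$ and chord-freeness of $w_i$ in $C_v$; after that, the claim reduces to standard facts about non-crossing diagonals of a convex polygon together with the two-ears theorem, and I do not anticipate any substantial obstacle.
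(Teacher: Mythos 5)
Your proof is correct, but note that the paper does not actually prove this lemma: it is stated with a \( \square \) immediately after the statement and attributed to the literature (``The following two lemmas are well known\ldots\ See, for example \cite{DiwanKurhekar}.''), so there is no internal proof to compare against. Your argument supplies a valid self-contained one. The reduction is sound: a \(3\)-cycle through \(vw_i\) has the form \(vw_iw_j\) where \(w_j\) is a common neighbour of \(v\) and \(w_i\), and since the neighbours of \(v\) are exactly the \(w_j\), such a \(3\)-cycle beyond the two facial ones corresponds exactly to a chord \(w_iw_j\) with \(j\not\equiv i\pm 1\); thus \(vw_i\) is contractible in \(P\) iff \(w_i\) is chord-free, matching the paper's definition. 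The chords lie inside the disk \(D\) bounded by the link \(C_v\) away from \(v\) (the other disk contains only the edges incident to \(v\) and the sides of \(C_v\)) and are pairwise non-crossing by planarity. Extending them to a full triangulation \(T\) of the \(d\)-gon and invoking the two-ears theorem, combined with your observation that two cyclically consecutive ears of \(T\) would yield crossing diagonals \(w_{i-1}w_{i+1}\) and \(w_iw_{i+2}\), produces an ear \(w_i\) with \(i\notin\{a,a+1\}\), as needed. The \(d=3\) case is handled directly and is consistent with the paper's convention that a triangulation has at least \(3\) vertices (so even \(P=K_4\), where contraction yields \(K_3\), causes no trouble). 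This is a clean route; the standard treatments in the irreducible-triangulation literature tend to argue by a direct case analysis on the link, whereas your use of polygon triangulations and the two-ears theorem packages that case analysis more transparently.
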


\begin{lem} 
    \label{lem_good_edge_not_in_face} 
    Suppose that \( P \) is a triangulation with at least \( 4 \) vertices.
    Each vertex of \( P \) is incident to at least two contractible edges
    of \( P \). \qed
\end{lem}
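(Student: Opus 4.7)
The plan is to deduce Lemma \ref{lem_good_edge_not_in_face} directly from Lemma \ref{lem_vertex_incident_good_edge} by applying that lemma to two well-chosen faces incident to the given vertex. The key observation is that once we produce one contractible edge at $v$ via a face $F_1$, the resulting edge lies on the boundary of some \emph{other} face $F_2$, and reapplying Lemma \ref{lem_vertex_incident_good_edge} to $F_2$ forces a second contractible edge distinct from the first.

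More precisely, given a vertex $v$ of $P$, I would first pick any face $F_1$ of $P$ that has $v$ on its boundary (such a face exists because every vertex of a sphere triangulation lies on at least one face). Lemma \ref{lem_vertex_incident_good_edge} then supplies a contractible edge $e_1$ incident to $v$ with $e_1 \notin F_1$. Next, I would let $F_2$ be one of the two faces of $P$ whose boundary contains $e_1$; both such faces contain $v$ as a vertex, and necessarily $F_2 \neq F_1$ since $e_1 \notin F_1$. Applying Lemma \ref{lem_vertex_incident_good_edge} a second time, now to the face $F_2$, yields a contractible edge $e_2$ incident to $v$ with $e_2 \notin F_2$. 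Because $e_1$ lies on the boundary of $F_2$, we must have $e_2 \neq e_1$, giving the two distinct contractible edges at $v$ required by the statement.

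There is no real obstacle here: the argument is essentially a one-line deduction once Lemma \ref{lem_vertex_incident_good_edge} is in hand. The only point that needs a moment of care is verifying that the face $F_2$ chosen to contain $e_1$ is genuinely distinct from $F_1$ and still has $v$ on its boundary, both of which follow immediately from $e_1$ being an edge of $P$ incident to $v$ but not lying in $F_1$.
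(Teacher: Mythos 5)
Your argument is correct. The paper does not give its own proof of this lemma; both Lemma \ref{lem_vertex_incident_good_edge} and Lemma \ref{lem_good_edge_not_in_face} are recorded as well known, with a citation to the literature. Your two-step deduction from Lemma \ref{lem_vertex_incident_good_edge} is sound: the edge $e_1$ produced by the first application lies on the boundary of some face $F_2$ containing $v$, and since $e_1\notin F_1$ we have $F_2\neq F_1$; the second application to $F_2$ then yields a contractible edge $e_2$ not in $F_2$, hence distinct from $e_1$. This is exactly the kind of short argument one would expect for a statement the authors treat as folklore, and it settles the lemma cleanly.
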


A {\bf braced triangulation} is a simple graph $G$ obtained from a triangulation $P$ by adjoining additional edges to $P$. We refer to these additional edges as {\em braces}. 
We write \(G= (P,B) \) to denote the braced triangulation obtained from a triangulation \( P \) and by adjoining the set $B$ of braces. 
We denote by $V(B)$ the set of vertices in $G$ which are incident with a brace.
An edge \( e \) of \( P \) is said to be {\em contractible} in \( G \) if 
\( e \) is contractible in \( P \) and
\( e \) does not belong to any $3$-cycle that contains
a brace. So in that case \( G/e = (P/e,B) \) is also a 
braced triangulation.

A braced triangulation \( G = (P,B) \)
is said to be {\em irreducible} if there is no edge of \( P \) that is contractible
in \( G \). This is analogous to the notion of irreducible triangulation that is well studied in the literature on 
triangulations of surfaces. In that context, it is known that for a given surface 
there are finitely many isomorphism classes of irreducible triangulations - 
see \cite{BarnetteEdelson} and \cite{boulch}.
In this section, we will show that 
for a given number of braces there are  finitely many isomorphism classes of
irreducible braced triangulations.

For a vertex \( v \) of \( P \) we write \( N_P(v) \) for the 
set of neighbours of \( v \) in \( P \). That is to say 
\( N_P(v) = \{ u \in V(P): uv \in E(P)\} \).
For vertices \( u,v \) let \( X_{uv} = N_P(u)\cap N_P(v)  \)
and define $r_{uv} = |X_{uv}|$.

\begin{lem}
    \label{lem_vertices}
    Suppose that \( G=(P,B) \) is an irreducible braced triangulation. Then 
    \begin{equation} \label{eq_vertices} 
    V(P) = V(B) \cup \bigcup_{uv \in B} X_{uv}  \end{equation}
\end{lem}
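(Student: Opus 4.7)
The plan is to argue by contradiction. Suppose there is a vertex $w\in V(P)$ that is neither incident to a brace nor lies in $X_{uv}$ for any brace $uv\in B$. The interesting case is $B\neq\emptyset$ (otherwise the right-hand side of \eqref{eq_vertices} is vacuous). Since there is a brace at all, $P$ must contain at least $4$ vertices, so Lemma~\ref{lem_good_edge_not_in_face} supplies a contractible edge $e=wv$ of $P$ incident to $w$. The aim is to show that $e$ is in fact contractible in the braced triangulation $G$, which will contradict the hypothesis that $G$ is irreducible.

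Because $e$ is already contractible in $P$, the definition of contractibility in $G$ reduces to verifying that $e$ lies on no $3$-cycle of $G$ that contains a brace. Suppose for contradiction that $\{w,v,z\}$ is such a $3$-cycle. Since $wv=e\in E(P)$ is not a brace, at least one of the remaining edges $wz,vz$ must be a brace.

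The key observation is that $wz$ cannot be a brace, for otherwise $w\in V(B)$, contrary to our choice of $w$. Hence $vz\in B$, and the third edge $wz$ is then an edge of $G$ which (again because $w\notin V(B)$) is not a brace, so $wz\in E(P)$. But now $wv$ and $wz$ both lie in $E(P)$, which means $w$ is a common $P$-neighbour of $v$ and $z$, i.e.\ $w\in X_{vz}$. This contradicts the assumption that $w\notin\bigcup_{uv\in B}X_{uv}$.

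Consequently no such brace-containing $3$-cycle through $e$ exists, so $e$ is contractible in $G$, contradicting irreducibility. The whole proof is very short; I expect the only real care required to be in correctly unpacking what \emph{contractible in $G$} means and in the small dichotomy that forces any brace in a $3$-cycle through $e$ to be incident to $v$ rather than $w$. No substantive obstacle is anticipated.
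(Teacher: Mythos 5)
Your proof is correct and takes essentially the same route as the paper: pick a vertex $w$ outside the right-hand side, use the well-known lemma to find a contractible edge of $P$ at $w$, and then observe that irreducibility forces a braced $3$-cycle through that edge in which the brace must be incident to the other endpoint (since $w\notin V(B)$), putting $w$ into the relevant $X_{uv}$. The only cosmetic difference is that you invoke Lemma~\ref{lem_good_edge_not_in_face} where the paper cites Lemma~\ref{lem_vertex_incident_good_edge}; both yield a contractible edge of $P$ at $w$, which is all that is needed.
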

\begin{proof}
    Suppose that \( w \in V(P)\backslash V(B) \). Then by Lemma \ref{lem_vertex_incident_good_edge} 
    there is some edge \( xw \) in \( P \) such that \( xw \) is contractible in 
    \( P \). Since \( G \) is irreducible, it follows that there is some
    brace \( xy \in B \) such that \( yw \in E(P) \). Clearly 
    \( w \in X_{xy} \) as required.
\end{proof}

Now suppose that \( uv \in B \).
Let \( Q_{uv} \) be the sphere subgraph of \( P \) that is formed by the 
complete bipartite graph \( K(\{u,v\},X_{uv}) \). 
 We will use this graph frequently
in the sequel, so we label its various elements as follows (see Figure \ref{fig_of_Q}).
Suppose $|X_{uv}|\geq 2$. Let \( R_1,\ldots,
R_{r_{uv}}\) be the faces of \( Q_{uv} \) with the labels chosen so that \( R_{i} \) is 
adjacent to \( R_{i+1} \) for \( i = 1,\ldots,r_{uv} \). Here we adopt the convention that 
\( R_{r_{uv}+1} = R_1 \). 
We suppose that the boundary vertices of \( R_i \) are \( y_{i},u,y_{i+1},v \) for \( i = 1,\ldots,r_{uv} \). 
So \( X_{uv} = \{y_1,\ldots,y_{r_{uv}}\} \) and by convention \( y_{r_{uv}+1} = y_1 \). 
Note that if \( y  \) is a point in the sphere 
and \( y \neq u,v \) then \( y \) belongs to at most two of \( R_1,\ldots,R_{r_{uv}} \). 
Furthermore if \( y \in V(P) - \{u,v\} \) then \( y \) belongs to exactly two of 
\( R_1,\ldots,R_{r_{uv}} \) if and only if \( y \in X_{uv} \).

    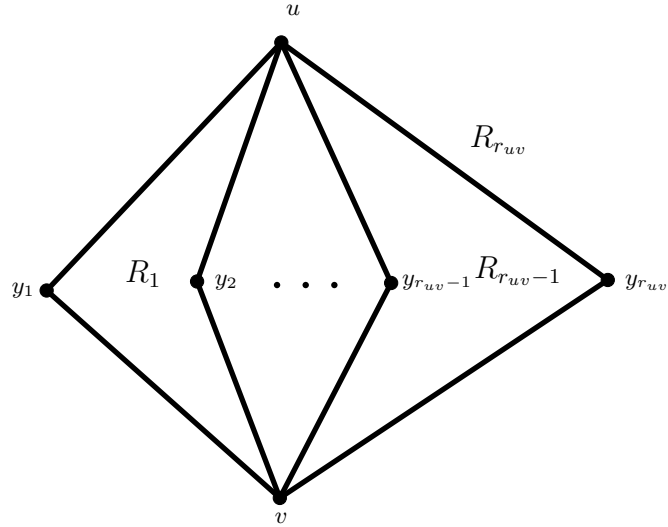
\begin{figure}[ht]
        \definecolor{ududff}{rgb}{0,0,0}
\begin{tikzpicture}[line cap=round,line join=round,>=triangle 45,x=1cm,y=1cm]
\draw [line width=2pt] (-7.87,6.64)-- (-10.99,3.34);
\draw [line width=2pt] (-7.87,6.64)-- (-8.99,3.46);
\draw [line width=2pt] (-7.87,6.64)-- (-6.41,3.44);
\draw [line width=2pt] (-7.87,6.64)-- (-3.53,3.48);
\draw [line width=2pt] (-7.89,0.58)-- (-3.53,3.48);
\draw [line width=2pt] (-7.89,0.58)-- (-6.41,3.44);
\draw [line width=2pt] (-7.89,0.58)-- (-8.99,3.46);
\draw [line width=2pt] (-7.89,0.58)-- (-10.99,3.34);
\draw (-8.19,3.6) node[anchor=north west] {\huge $\mathbf \dots$};
\draw (-10.09,3.88) node[anchor=north west] {$R_1$};
\draw (-5.45,3.94) node[anchor=north west] {$R_{r_{uv}-1}$};
\draw (-5.51,5.68) node[anchor=north west] {$R_{r_{uv}}$};
\begin{scriptsize}
\draw [fill=ududff] (-7.87,6.64) circle (2.5pt);
\draw[color=ududff] (-7.71,7.07) node {$u$};
\draw [fill=ududff] (-10.99,3.34) circle (2.5pt);
\draw[color=ududff] (-11.3,3.34) node {$y_1$};
\draw [fill=ududff] (-8.99,3.46) circle (2.5pt);
\draw[color=ududff] (-8.6,3.45) node {$y_2$};
\draw [fill=ududff] (-6.41,3.44) circle (2.5pt);
\draw[color=ududff] (-5.8,3.45) node {$y_{r_{uv}-1}$};
\draw [fill=ududff] (-3.53,3.48) circle (2.5pt);
\draw[color=ududff] (-3,3.45) node {$y_{r_{uv}}$};
\draw [fill=ududff] (-7.89,0.58) circle (2.5pt);
\draw[color=ududff] (-7.87,0.31) node {$v$};
\end{scriptsize}
\end{tikzpicture}

        \centering
        \caption{The faces of \( Q_{uv} \). Each \( R_i \) is a closed quadrilateral region in the sphere.}
        \label{fig_of_Q}
    \end{figure}

\begin{lem}
    \label{lem_face_with_no_braced_vertices}
    Suppose that \( G=(P,B) \) is irreducible, \( uv \in B \), 
    and that some face of \( Q_{uv} \) contains no 
    vertices in \( V(B) \) other than \( u,v \). Then 
    \( r_{uv} \leq 3 \).
\end{lem}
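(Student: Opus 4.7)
The plan is to argue by contradiction: suppose $r_{uv} \geq 4$, and let $R_i$ be a face of $Q_{uv}$ containing no vertices of $V(B) \setminus \{u,v\}$. I will produce an edge of $P$ that is contractible in $G$, contradicting irreducibility. The target edge will be $y_i y_{i+1}$, so the plan has three stages: first show $R_i$ has no interior vertices of $P$, then deduce $y_i y_{i+1} \in E(P)$, and finally verify that this edge is contractible in $G$.

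For the first stage, suppose $w$ is an interior vertex of $R_i$. Because $w$ lies in $R_i$, the hypothesis forces $w \notin V(B)$, so by Lemma \ref{lem_vertices} we have $w \in X_{bc}$ for some brace $bc \in B$. The point is that the edges $wb, wc$ of $P$ emanate from an interior point of $R_i$ and cannot cross any edge of $Q_{uv}$, so they lie entirely inside $R_i$; consequently $b, c$ are vertices of $R_i$. Since $b, c \in V(B)$, we get $\{b,c\} \subseteq \{u,v\}$, so $bc = uv$, which places $w$ in $X_{uv} = \{y_1,\dots,y_{r_{uv}}\}$ on the boundary of $R_i$, contradicting its being interior. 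Hence $R_i$ has no interior vertices, and the only way a quadrilateral face can be triangulated by $P$ without interior vertices is via one of the two diagonals $uv$ or $y_i y_{i+1}$; since $uv \in B$ is disjoint from $E(P)$, the diagonal is $y_i y_{i+1}$.

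The main obstacle is the next stage: showing $y_i y_{i+1}$ is contractible in $P$, i.e.\ that $u$ and $v$ are the only common neighbours of $y_i$ and $y_{i+1}$ in $P$. For any other common neighbour $z$, the edges $zy_i$ and $zy_{i+1}$ each lie in a single face of $Q_{uv}$ incident to the relevant endpoint, so $zy_i$ lies in $R_{i-1}$ or $R_i$ and $zy_{i+1}$ lies in $R_i$ or $R_{i+1}$. A short case analysis on the four possibilities forces $z$ to lie in one of the intersections $R_i \cap R_{i-1}$, $R_i \cap R_{i+1}$, or $R_{i-1} \cap R_{i+1}$ (or $R_i$ itself). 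The first two intersections contain only the vertices $\{u, v, y_i\}$ and $\{u, v, y_{i+1}\}$; the key use of the hypothesis $r_{uv} \geq 4$ is that $R_{i-1}$ and $R_{i+1}$ are non-adjacent and share only the vertices $u, v$; and the case $z \in R_i$ is excluded by the absence of interior vertices. In every case $z \in \{u,v\}$, contradicting the choice of $z$.

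For the final stage, I need $y_i y_{i+1}$ to avoid every $3$-cycle containing a brace. But $y_i, y_{i+1}$ lie in $R_i$ and $R_i \cap V(B) = \{u,v\}$, so neither endpoint is braced; hence no brace shares an endpoint with $y_i y_{i+1}$, and so no $3$-cycle containing this edge can contain a brace. Combining this with the previous stage shows $y_i y_{i+1}$ is contractible in $G$, contradicting irreducibility of $G$. Therefore $r_{uv} \leq 3$.
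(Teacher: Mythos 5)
Your proof is correct and follows essentially the same approach as the paper's. The paper first invokes irreducibility to conclude that the diagonal edge $y_iy_{i+1}$ must lie in a non-facial $3$-cycle of $P$ and then shows the third vertex would have to lie in $R_{i-1}\cap R_{i+1}=\{u,v\}$, whereas you run the same argument forward (a case analysis showing any common neighbour of $y_i,y_{i+1}$ is $u$ or $v$, hence the edge is contractible in $G$) and use Lemma~\ref{lem_vertices} in place of a direct appeal to Lemma~\ref{lem_vertex_incident_good_edge}; these are merely presentational differences.
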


\begin{proof}
    Suppose $r_{uv}\geq 4$ and suppose that \( R \) is a face of \( Q_{uv} \) satisfying the hypothesis. 
    Let \( a,u,b,v \) be the boundary vertices 
    of \( R \).
    First we claim that there are no vertices of \( P \)
    in the interior of \( R \). If \( w \)  was such a vertex then
    by Lemma \ref{lem_vertex_incident_good_edge} it is incident with an edge which is 		contractible in $P$. Since $w\notin V(B)$, it follows there is some
    brace \( xy \in B \) such that \( wx  \) and \( wy \)
    are edges in \( P \). Now since, 
    by assumption, \( w \not \in X_{uv} \), at least one of 
    \( x,y \), wlog say \( x \), is not in \( \{u,v\} \). 
    Then \( x \in R \) and \( x \in V(B) -\{u,v\} \) 
    contradicting our hypothesis.

    So, in view of this claim, and since \( P \) is 
    a triangulation that does not contain the edge \( uv \), 
    it follows that \( ab \) is an edge of \( P \) that is 
    contained within \( R \). Now since neither \( a \) 
    nor \( b \) 
    is in \( V(B) \), it follows that \( ab \) is in some 
    non-facial 3-cycle of \( P \). Let 
    \( c \) be the third vertex of this 3-cycle. Clearly 
    \( c \not\in\{u,v\} \).
    Now let \( S,T \) be the distinct faces of \( Q_{uv} \) that 
    are adjacent to \( R \). Clearly \( c \in S \cap T \). 
    On the other hand, since \( r_{uv} \geq 4 \) it follows that \( S \cap T =
    \{u,v\}\), a contradiction.
\end{proof}

\begin{thm}
\label{thm_quad_bound}
Suppose that $G= (P,B)$ is an irreducible braced triangulation and that $b = |B| \geq 2$. Then $|V(P)| \leq 4b^2- 2b$.
\end{thm}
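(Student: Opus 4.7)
The plan is to combine the structural identity from Lemma \ref{lem_vertices} with a face-counting argument that uses Lemma \ref{lem_face_with_no_braced_vertices} brace-by-brace. Starting from
\[ V(P) = V(B) \cup \bigcup_{uv \in B} X_{uv}, \]
a union bound immediately gives $|V(P)| \le |V(B)| + \sum_{uv \in B} r_{uv}$. The trivial estimate $|V(B)| \le 2b$ handles the first summand, so the real task is to bound each $r_{uv}$ by something linear in $b$.

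Fix a brace $uv \in B$. If $r_{uv} \le 3$ there is nothing to do, so assume $r_{uv} \ge 4$. By the contrapositive of Lemma \ref{lem_face_with_no_braced_vertices}, every face $R_i$ of $Q_{uv}$ must contain at least one vertex of $V(B) \setminus \{u,v\}$. I want to turn this into an upper bound on the number of faces by double counting incidences between faces of $Q_{uv}$ and vertices of $V(B) \setminus \{u,v\}$. The key geometric fact, which is precisely the observation recorded in the paragraph defining $Q_{uv}$, is that a point of the sphere distinct from $u$ and $v$ lies in at most two of the closed quadrilateral regions $R_1,\ldots,R_{r_{uv}}$. Hence each vertex of $V(B) \setminus \{u,v\}$ contributes at most two incidences, giving
\[ r_{uv} \le 2\,|V(B) \setminus \{u,v\}| \le 2(|V(B)| - 2) \le 2(2b - 2) = 4b - 4. \]
Since $b \ge 2$ we have $4b-4 \ge 4 > 3$, so in fact $r_{uv} \le 4b-4$ for every brace $uv$, without having to distinguish cases.

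Summing over the $b$ braces and inserting the bound on $|V(B)|$ yields
\[ |V(P)| \;\le\; |V(B)| + \sum_{uv \in B} r_{uv} \;\le\; 2b + b(4b - 4) \;=\; 4b^{2} - 2b, \]
which is the desired inequality. The routine union bound and the trivial count of endpoints of braces are the easy steps; the one non-trivial ingredient is the double-counting estimate for a single brace, and that reduces entirely to Lemma \ref{lem_face_with_no_braced_vertices} together with the fact (already noted when $Q_{uv}$ is introduced) that non-polar sphere points lie in at most two of its faces. No induction or further case analysis should be needed.
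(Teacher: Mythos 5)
Your argument is correct and is essentially the paper's proof: both start from Lemma \ref{lem_vertices} to get $|V(P)| \le 2b + \sum_{uv\in B} r_{uv}$, then apply the contrapositive of Lemma \ref{lem_face_with_no_braced_vertices} together with the at-most-two-faces observation for $Q_{uv}$ to deduce $r_{uv}\le 4b-4$, and sum. The only difference is that you explicitly note $4b-4\ge 4>3$ for $b\ge 2$ so the bound applies uniformly without a case split, a small gap that the paper's wording leaves implicit.
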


\begin{proof}
By Lemma \ref{lem_vertices} we know that $|V(P)| \leq 2b + \sum_{uv \in B} r_{uv}$. 
Now if $r_{uv} \geq 4$ then it follows from Lemma \ref{lem_face_with_no_braced_vertices} that every face of $Q_{uv}$ contains an element of $V(B) - \{u,v\}$. Since any such element belongs to at most two faces of $Q_{uv}$ we easily conclude that $r_{uv} \leq 4b-4$. Thus $\sum_{uv \in B} r_{uv} \leq b(4b-4)$ and the result follows.  
\end{proof}

We have the following immediate corollary of Theorem
\ref{thm_quad_bound}.
\begin{cor}
\label{cor:finite}
    For any positive integer \( b \),
    there are finitely many irreducible braced triangulations with \( b \)
    braces.
\end{cor}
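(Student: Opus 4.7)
The plan is to deduce the corollary from Theorem \ref{thm_quad_bound} by a straightforward counting argument on graphs of bounded size. The key observation is that once $|V(P)|$ is bounded, the full braced triangulation $G=(P,B)$ has only a bounded number of edges: Euler's formula forces $|E(P)| = 3|V(P)| - 6$ since $P$ is a sphere triangulation, and we have exactly $b$ additional braces. Hence the underlying simple graph of $G$ has bounded vertex and edge count depending only on $b$, and there are only finitely many isomorphism classes of simple graphs of any given bounded size.

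For $b \geq 2$, I would apply Theorem \ref{thm_quad_bound} directly to obtain $|V(P)| \leq 4b^2 - 2b$, and the counting argument above closes the case. For $b = 1$ the theorem as stated does not apply (its arithmetic requires $b\geq 2$), but this gap is closed at once using Lemma \ref{lem_face_with_no_braced_vertices}: for the unique brace $uv$ the set $V(B)\setminus\{u,v\}$ is empty, so every face of $Q_{uv}$ trivially satisfies the hypothesis of that lemma, forcing $r_{uv} \leq 3$; combined with Lemma \ref{lem_vertices} this yields $|V(P)| \leq 5$.

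There is no substantial obstacle here, since essentially all of the content sits in Theorem \ref{thm_quad_bound}. The corollary is a short piece of bookkeeping, with the only minor point being the separate treatment of the $b=1$ case outlined above.
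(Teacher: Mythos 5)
Your proposal is correct and follows the same route as the paper, which simply presents the corollary as an immediate consequence of Theorem~\ref{thm_quad_bound}. You are in fact more careful than the paper's one-line remark: you rightly observe that Theorem~\ref{thm_quad_bound} is stated only for $b\geq 2$ and you close the $b=1$ case by combining Lemmas~\ref{lem_vertices} and~\ref{lem_face_with_no_braced_vertices}, which is exactly the argument underlying Theorem~\ref{thm_unibraced} in Section~\ref{s:unibrace}.
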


It is natural to wonder if the bound in Theorem 
\ref{thm_quad_bound} can be sharpened. Indeed 
in the context of triangulations of surfaces of positive genus Boulch et al. in \cite{boulch} have established that 
if $f(g,c)$ is the maximum size of an irreducible triangulation of a surface with genus $g$ and $c$ boundary components, then $f(g,c)$ is
$\mathcal O(g+c)$. Motivated by this we devote the remainder of this section to establishing the following linear bound for the number of vertices of an irreducible braced sphere triangulation in terms of the number of braces.

\begin{thm}
    \label{thm_main_bound}
    Suppose that \( G = (P,B) \) is an irreducible braced triangulation.
    Then \( |V(P)| \leq 11b -4\).
\end{thm}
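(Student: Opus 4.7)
My plan is to sharpen the counting in the proof of Theorem~\ref{thm_quad_bound}, which gave the estimate $|V(P)| \le 2b + \sum_{uv \in B} r_{uv}$ (via Lemma~\ref{lem_vertices}) and then bounded each $r_{uv}$ separately by $4b-4$ using Lemma~\ref{lem_face_with_no_braced_vertices}. The bottleneck is precisely this per-brace estimate: to obtain a linear bound, one must amortize the face--witness assignments across all braces simultaneously, rather than treating each brace independently.

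Concretely, I would set up a global double count on triples $(uv, R, w)$, where $uv \in B$ is a brace with $r_{uv}\ge 4$, $R$ is a face of $Q_{uv}$, and $w \in V(B)\setminus\{u,v\}$ is a braced witness vertex in $R$ (whose existence is guaranteed by Lemma~\ref{lem_face_with_no_braced_vertices}). The face-side count equals $\sum_{uv:\, r_{uv}\ge 4} r_{uv}$. The key step is then a structural lemma showing that each braced vertex $w$ is a witness for only boundedly many (brace, face) pairs, with the bound a small absolute constant independent of $b$. Granting this, the witness-side count is $O(|V(B)|)=O(b)$, and combined with the trivial estimate $\sum_{uv:\, r_{uv}\le 3} r_{uv}\le 3b$ this yields $\sum_{uv\in B} r_{uv}\le 9b-4$, hence $|V(P)|\le 2b+(9b-4)=11b-4$.

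The main obstacle is the witness-multiplicity structural lemma. To bound how many faces of distinct $Q_{uv}$'s can contain a fixed braced $w$, one needs to exploit irreducibility locally at $w$: if $w$ is a boundary corner of some $Q_{uv}$ (i.e.~$w\in X_{uv}$) then it sits on exactly two adjacent faces of $Q_{uv}$, and the cyclic order in the link of $u$ (and of $v$) pins the local configuration; if $w$ is interior to a face, then the brace $wz\in B$ together with Lemma~\ref{lem_vertices} applied at $w$ constrains where $z$ can sit in the sphere embedding and which further faces $w$ can be interior to. Making these topological/combinatorial restrictions quantitative --- and in particular extracting the exact constant $11$ rather than merely \emph{some} linear bound --- is where the bulk of the technical work lies. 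The rest of the proof is the relatively standard global double count described above.
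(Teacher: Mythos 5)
Your high-level intuition is right: one must amortize across braces rather than bounding each $r_{uv}$ separately, and the target inequality $\sum_{uv\in B} r_{uv}\le 9b-4$ is exactly what the paper derives. But the specific amortization mechanism you propose --- a ``witness-multiplicity'' lemma asserting that each braced vertex $w$ lies in only $O(1)$ faces over all the graphs $Q_{uv}$ --- is false, and this is a genuine gap, not a technical detail to be filled in. The problem is nesting: if $Q_{u_1v_1}\succ Q_{u_2v_2}\succ\cdots\succ Q_{u_kv_k}$ is a chain (each contained in a bounded face of the previous one), then any braced vertex $w$ lying inside $Q_{u_kv_k}$ sits in a bounded face of \emph{every} $Q_{u_iv_i}$ in the chain, so it witnesses $k$ distinct brace--face pairs, and $k$ has no absolute bound independent of $b$. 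Irreducibility gives no a priori cap on chain length, and indeed the paper's argument is structured precisely to cope with long chains rather than to rule them out. So the ``bulk of the technical work'' you defer to is not merely delicate; the lemma you would need simply does not hold.

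The paper sidesteps this by never bounding how many $Q_{uv}$'s a single vertex can witness. Instead, for each $Q_i$ with $r_i\ge 6$, it constructs a witness set $Z_i\subset V(B)\setminus\{u_i,v_i\}$ by following maximal chains $Q_i\succ Q_{s_2}\succ\cdots\succ Q_{s_k}$ all the way to the bottom and only then invoking Lemma~\ref{lem_face_with_no_braced_vertices} in an innermost region. The crucial payoff is Lemma~\ref{lem_disjointZ}: the sets $Z_i$ are pairwise \emph{disjoint}. This disjointness, rather than a per-vertex multiplicity bound, is what makes the count $\sum_i |Z_i| = |\bigcup_i Z_i|\le |V(B)|-2\le 2b-2$ legitimate. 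Combined with $|Z_i|\ge (r_i-5)/2$ and the trivial bound $r_{uv}\le 5$ for braces outside $C_G$, this gives $\sum r_{uv}\le 5b+2(2b-2)=9b-4$. If you want to keep a double-counting flavor, the correct framing is: assign to each $Q_i$ a set of witnesses chosen so that the assignment is globally injective on vertices, and prove that injectivity; the chain/poset machinery is what makes that assignment well-defined and injective, and it cannot be replaced by a local multiplicity argument.
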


Before giving the proof of Theorem \ref{thm_main_bound} we need some lemmas.

\begin{lem}
    \label{lem_ruv_5}
    Suppose that $G=(P,B)$ is a braced triangulation
    such that $uv \neq xw$ and  $r_{uv},r_{xw}\geq 4$. Then either 
    \begin{enumerate}[(i)]
        \item  there exists a face $R$ of $Q_{uv}$ that contains $Q_{xw}$, or,
        \item $r_{uv} = r_{xw} = 4$ and $Q_{uv}\cup Q_{xw}$ is an octahedral graph.
    \end{enumerate}
\end{lem}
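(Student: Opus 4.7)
The plan is to analyse the positions of $x$ and $w$ relative to the embedded graph $Q_{uv}$. Each vertex of $P$ lies either on $V(Q_{uv}) = \{u,v\} \cup X_{uv}$ or in the interior of a unique closed face $R_i$. Two topological observations drive the argument. First, every $P$-edge incident to a vertex in the interior of $R_i$ stays in $R_i$, and every $P$-edge incident to $y_k$ stays in $R_{k-1} \cup R_k$, since edges of $P$ cannot cross the $1$-skeleton of $Q_{uv}$. Second, when $r_{uv} \geq 4$, any distinct faces $R_i, R_j$ satisfy $R_i \cap R_j \cap V(P) = \{u,v\}$ if the faces are non-adjacent, and $\{u,v,y_\ell\}$ for some unique $\ell$ if they are adjacent.

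I first handle $\{x,w\} \cap \{u,v\} \neq \emptyset$: say $x = u$. Since $uw$ is a brace, $w \notin N_P(u)$, hence $w$ lies in the interior of some $R_i$. Then $X_{uw} \subseteq N_P(w) \subseteq R_i$, and all edges of $Q_{uw}$ embed inside $R_i$, giving (i). Now assume $x, w \in V(P) \setminus \{u,v\}$. If both $x, w$ lie in face interiors, they either share a face $R_i$---yielding $Q_{xw} \subseteq R_i$ and hence (i)---or lie in distinct faces $R_i, R_j$, in which case $X_{xw} \subseteq R_i \cap R_j \cap V(P)$ has size at most $3$, contradicting $r_{xw} \geq 4$. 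If $x$ lies in the interior of $R_i$ and $w = y_k \in X_{uv}$, then $X_{xw} \subseteq R_i \cap (R_{k-1} \cup R_k)$; for $i \in \{k-1, k\}$ this gives $Q_{xw} \subseteq R_i$ and (i), and for other $i$ one checks that the right-hand side contains at most $3$ vertices of $P$ (using that $R_i$ can be adjacent to at most one of $R_{k-1}, R_k$ when $r_{uv} \geq 4$), again a contradiction.

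The final case is $x = y_a,\, w = y_b \in X_{uv}$, where $X_{xw} \subseteq (R_{a-1} \cup R_a) \cap (R_{b-1} \cup R_b)$. Let $\delta$ denote the cyclic distance between $a$ and $b$ in $\{1,\dots,r_{uv}\}$. For $\delta = 1$, the right-hand side reduces to $R_a$ (since $R_{a-1} \cap R_{a+1} \subseteq \{u,v\} \subseteq R_a$), giving $Q_{xw} \subseteq R_a$ and (i). For $\delta \geq 2$, enumerating the four pairwise intersections using the second observation shows $|X_{xw}| \leq 3$ in every situation except $r_{uv} = 4$ and $\delta = 2$, where the adjacent pairs $\{R_{a-1}, R_{a+2}\}$ and $\{R_a, R_{a+1}\}$ contribute the extra vertices $y_{a-1}$ and $y_{a+1}$ respectively. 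In that exceptional configuration $X_{xw} \subseteq \{u, v, y_{a-1}, y_{a+1}\}$, and $r_{xw} \geq 4$ forces equality. Then $V(Q_{uv}) = V(Q_{xw}) = \{u, v, y_1, y_2, y_3, y_4\}$, and a direct count shows $Q_{uv} \cup Q_{xw}$ has the $12$ edges of $K_{2,2,2}$, giving (ii). The main obstacle is isolating this unique octahedral configuration among the $\delta \geq 2$ subcases and verifying that the resulting graph is precisely the octahedron.
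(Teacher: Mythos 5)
Your proof is correct and follows essentially the same strategy as the paper: classify $x$ and $w$ by their position relative to $Q_{uv}$ (interior of a face vs.\ in $V(Q_{uv})$), then use the fact that edges of $P$ cannot cross the $1$-skeleton of $Q_{uv}$ to bound $X_{xw}$, isolating the $r_{uv}=r_{xw}=4$, cyclic-distance-$2$ configuration as the unique exception, which is the octahedron. The paper is terser: it observes in one stroke that if $x$ lies in a face interior $R$ then $N_P(x)\subset R$, and since $x\notin X_{uv}$ at most one of $u,v$ lies in $N_P(x)$, so $r_{xw}\geq 4$ forces $w\in R$ as well -- this disposes of your first three subcases (including $x\in\{u,v\}$, which reduces to this because $w$ cannot then lie in $V(Q_{uv})$) without a separate cyclic-distance enumeration; the remaining case $\{x,w\}\subset X_{uv}$ with no common face is handled by the same counting you carry out. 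Your version makes the final octahedron identification fully explicit, which the paper leaves to the reader, but no new idea is introduced.
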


\begin{proof}
Suppose that one of $x,w$, say $x$, is contained in the interior of some face $R$ of $Q_{uv}$.
Then $N_P(x) \subset R$ and since at least one of $u,v$ is not in $N_P(x)$ and $r_{xw} \geq 4$ it follows that $w \in R$ also. So $Q_{xw} \subset R$ in this case. 

So, using the fact that $xw \not\in E(P)$ we can assume that $\{x,w\} \subset X_{uv}$.
If $\{x,w\}$ is contained in a face $R$ of $Q_{uv}$ then since $r_{uv} \geq 4$ it follows that $Q_{xw} \subset R$ and we are done. On the other hand suppose that there is no face of $Q_{uv}$ containing both of $x,w$. Then it follows that $X_{xw} \subset V(Q_{uv})$ and using the assumption that $r_{uv},r_{xw} \geq 4 $ we see that the only possibility is that $Q_{uv} \cup Q_{xw}$ is an octahedral graph.
\end{proof}

Now suppose that $G = (P,B)$ is a braced sphere triangulation. For the remainder of this section it will be convenient to work in the context of plane graphs instead of sphere graphs. So we fix some point in the sphere that is not in $P$ and by removing that point we consider $P$ as a plane graph. In particular any subgraph of $P$ has a unique unbounded face. 

We will need the following elementary observations about certain collections of plane graphs.

Suppose that $C$ is a finite set of plane graphs such that 
\begin{equation}
    \label{eqn_planegraphs}
\text{for all $H,K \in C$ with $H \neq K$, there is some face of $H$ that contains $K$.}
\end{equation}
Observe that $C$ has a partial order defined by 
$H \preceq K$ if either $H=K$ or $H$ is contained in a bounded face of $K$. 
This partial order will be key in the remainder of this section.

\begin{lem}
\label{lem_ordering2}
Suppose that $H,K$ are distinct graphs in $C$ and that there is some $z$ in the plane that does not lie in the unbounded face of $H$ and does not lie in the unbounded face of $K$. Then either $H \prec K$ or $K \prec H$.
\end{lem}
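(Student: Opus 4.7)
The plan is to argue by contradiction. Suppose that neither $H \prec K$ nor $K \prec H$. Applying (\ref{eqn_planegraphs}) to the ordered pair $(H,K)$, there is some face of $H$ containing $K$; if this face were bounded, then $K \prec H$ would hold, contradicting our assumption. Hence $K$ is contained in the (closed) unbounded face $U_H$ of $H$. By the symmetric application to $(K,H)$, we also have $H \subseteq U_K$.

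Next, I would locate a common bounded face. Since $z \notin U_H$, the point $z$ lies in the open interior $b_H$ of some bounded face of $H$, and similarly $z \in b_K$ for an analogous open bounded component of $\bR^2 \setminus K$. I claim that $b_H = b_K$. The set $b_H$ is an open connected subset of $\bR^2$ which, as a face interior, is disjoint from $H$. Moreover, since $b_H$ and the open unbounded component $u_H$ are distinct connected components of $\bR^2 \setminus H$, we have $b_H \cap U_H = \emptyset$. Combined with $K \subseteq U_H$, this gives $b_H \cap K = \emptyset$. Thus $b_H$ is a connected subset of $\bR^2 \setminus K$ containing $z$, and so must lie in the connected component of that complement which contains $z$, namely $b_K$. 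Symmetrically $b_K \subseteq b_H$, proving the claim.

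The common region $W := b_H = b_K$ is then simultaneously a bounded face of $H$ and of $K$, and its boundary is a cycle $J$ lying in $H \cap K$ which bounds a face in each of the two graphs. To finish, one needs to derive a contradiction from this configuration: exploiting $H \subseteq U_K$ together with the fact that no portion of $H$ enters the interior of the closed face $\overline{W}$ of $K$ (and the symmetric statement), one argues via a Jordan curve analysis on $J$ and the connectedness of the graphs in $C$ that $H$ and $K$ must coincide with the cycle $J$, which forces $H = K$, the desired contradiction.

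I expect this final step to be the main technical obstacle. Everything up to the identification $b_H = b_K$ is fairly clean point-set topology, but converting this set-theoretic equality of faces into an identification of the graphs $H$ and $K$ relies on the topological containments $H \subseteq U_K$, $K \subseteq U_H$, and the planar structure of $C$, all of which must be carefully deployed to rule out extraneous edges of $H$ or $K$ living outside the common facial cycle.
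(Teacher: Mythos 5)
Your derivation of the shared face $b_H=b_K$ is correct and in fact goes further than the paper's very terse proof, which simply asserts---without further argument---that $z$ being in a bounded face of $H$ forces $z$ to lie in the unbounded face of $K$. Everything you write up to the identification $W:=b_H=b_K$ is sound point-set topology (note that $z\notin H$ because $H\subseteq U_K$ and $z\notin U_K$, which is what lets you place $z$ in the open interior of a bounded face rather than on $H$ itself).

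The gap you flag in the final step is genuine, and the statement you hope to establish---that $H$ and $K$ each coincide with the boundary cycle $J$ of $W$---does not follow from (\ref{eqn_planegraphs}) and connectedness alone. Take $H$ to be a $4$-cycle $ABCD$ together with a pendant edge $AE$ drawn in the exterior of the square, and $K$ the same $4$-cycle together with a pendant edge $BF$ drawn elsewhere in the exterior. Both are connected plane graphs, each lies in the unbounded face of the other (so (\ref{eqn_planegraphs}) holds), and any $z$ in the open square satisfies the hypotheses of the lemma; yet $H$ and $K$ are $\preceq$-incomparable since neither is contained in the closed square. So the lemma, read literally for an arbitrary $C$ satisfying (\ref{eqn_planegraphs}), is false. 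What rescues it in the paper is that it is only ever applied to $C_G=\{Q_{uv}:r_{uv}\ge 6\}$, whose members are $2$-connected copies of $K_{2,r}$, so every face boundary is a cycle. With $2$-connectedness your final step closes cleanly: let $J_K$ be the outer cycle of $K$ (the boundary of $U_K$) and $D_K$ the closed disc it bounds; from $J\subseteq H\subseteq U_K$ and $J\subseteq K\subseteq D_K$ we get $J\subseteq U_K\cap D_K=J_K$, hence $J=J_K$; but then the cycle $J$ bounds both the unbounded face and the bounded face $\overline{W}$ of $K$, which forces $K=J$, and symmetrically $H=J$, so $H=K$. Thus your approach is salvageable, but you must invoke $2$-connectedness of the graphs in $C$ (as the paper implicitly relies on via the structure of the $Q_{uv}$) rather than mere connectedness.
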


\begin{proof}
Suppose that $H,K$ are incomparable. Then, by (\ref{eqn_planegraphs}), $H$ is contained in the unbounded face of $K$ and vice versa. Since $z$ lies in a bounded face of $H$, it must lie in the unbounded face of $K$ which contradicts our assumption.
\end{proof}

We have the following immediate consequence of Lemma \ref{lem_ordering2}.

\begin{cor}
\label{cor_totalorder}
For any point $z $ in the plane let $$C^z = \{H \in C: \text{$z$ is not in the unbounded face of $H$}\}.$$ Then, with respect to $\preceq$, $C^z$ is a totally ordered subset of $C$. \qed
\end{cor}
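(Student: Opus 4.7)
The corollary should follow essentially immediately from Lemma \ref{lem_ordering2}, so my plan is simply to verify that the hypotheses of that lemma are met by any two distinct elements of $C^z$. To show that $C^z$ is totally ordered, I need to check two things: that $\preceq$ restricts to a partial order on $C^z$ (which is automatic since it is already a partial order on $C \supseteq C^z$), and that any two distinct elements of $C^z$ are comparable.

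For the comparability, I would fix distinct $H, K \in C^z$. By the very definition of $C^z$, the point $z$ lies neither in the unbounded face of $H$ nor in the unbounded face of $K$. This is exactly the hypothesis of Lemma \ref{lem_ordering2}, so I can invoke that lemma to conclude $H \prec K$ or $K \prec H$. Since this holds for every pair of distinct elements, $C^z$ is totally ordered under $\preceq$.

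There is essentially no obstacle here; the work has already been done in Lemma \ref{lem_ordering2}. The only subtlety worth a brief sentence is noting that $C^z$ inherits antisymmetry, reflexivity, and transitivity from $C$ without further argument, so the corollary really is just a packaging of the previous lemma in terms of the subset $C^z$. The proof will be one or two short sentences.
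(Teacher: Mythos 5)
Your proof is correct and is exactly the paper's intended argument: the paper states Corollary \ref{cor_totalorder} as an ``immediate consequence'' of Lemma \ref{lem_ordering2} and leaves the verification to the reader, which is precisely the routine check you carried out. Nothing further is needed.
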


From now on, let $C_G = \{Q_{uv}: uv \in B, r_{uv} \geq 6\}$ and suppose $|C_G| = c$. 
For convenience we will write $C_G= \{Q_1,\dots,Q_c\}$ where $Q_i = Q_{u_iv_i}$ and $r_i = r_{u_iv_i}$. By Lemma 
\ref{lem_ruv_5},  
$C_G$ satisfies (\ref{eqn_planegraphs})
and so Lemma \ref{lem_ordering2} 
and Corollary \ref{cor_totalorder}
apply to $C_G$.

Now for each $i$, let $R^i_1,\dots, R^i_{r_i}$
be the faces of $Q_i$, labelled so that $R^i_1$ 
is the unbounded face and so that $R^i_j$ is adjacent to $R^i_{j+1}$ for $j = 1,\dots, r_i-1$. 
We choose a set of vertices $Z_i \subset V(B)\backslash\{u_i,v_i\}$ as follows. Start with $Z_i = \emptyset$. Now suppose that 
$t$ is the smallest integer such that $5 \leq t \leq r_i -1$ and $R^i_t$ does not contain any vertex in $Z_i$. Let $Q_i = Q_{s_1} \succ Q_{s_2}  \succ \dots \succ Q_{s_k}$ be a chain in $C_G$ of maximal length such that $Q_{s_2}$ is contained in $R^i_t$, and for $m \geq 2$, $Q_{s_{m+1}}$ is contained in $R^{s_{m}}_3$. Moreover we choose so that, for $m \geq 1$, $Q_{s_{m+1}}$ is a maximal element (with respect to $\prec$) in the set $\{Q_l \in C_G: Q_l \prec Q_{s_m} \}$. So for $m \geq 1$ there is no element of $C_G$ strictly between $Q_{s_m}$ and $Q_{s_{m+1}}$.  

Since the chain above has maximal length, it follows that
\begin{equation}
    \label{eqn_noQinside}
    \text{$R^{s_{k}}_3$, respectively $R^i_t$, does not contain any $Q_j \in C_G$ if $k \geq 2$, respectively if $k=1$. }
\end{equation}

By Lemma \ref{lem_face_with_no_braced_vertices}, $R^{s_{k}}_3$, or $R^i_t$ if $k =1$, contains some vertex  $z \in V(B)\setminus \{u_{s_k},v_{s_k}\}$. In particular, since $z$ does not lie in the unbounded face of $Q_{s_{k}}$ it follows that $z \not\in \{u_i,v_i\}$. So $z \in V(B) \setminus \{u_i,v_i\}$ and $z$ lies in $R^i_t$. We add $z$ to the set $Z_i$. 

We continue choosing elements in this way until each of the faces $R^i_5,\dots,R^i_{r_i-1}$ contains at least one element of $Z_i$. Since no vertex in $V(P)\setminus \{u_i,v_i\}$ is contained in more than two of $R^i_5,\dots, R^1_{r_i-1}$ it follows that $|Z_i| \geq \frac{r_i-5}2$. Also since $Z_i \subset R^i_5 \cup \dots \cup R^i_{r_i-1}$ it follows that
\begin{equation}
\label{eqn_Zbounded}
\text{for $1 \leq i \leq c$ no element of $Z_i$ lies in $R^i_1$ or in $R^i_3$.}
\end{equation}

\begin{lem}
\label{lem_disjointZ}
$Z_i \cap Z_j = \emptyset$ for all $i \neq j$.
\end{lem}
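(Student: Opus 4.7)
Assume for contradiction that $z \in Z_i \cap Z_j$ with $i \neq j$. By \eqref{eqn_Zbounded}, $z$ lies in neither the unbounded face of $Q_i$ nor of $Q_j$, so both belong to $C_G^z$ and, by Corollary \ref{cor_totalorder}, are comparable under $\preceq$; without loss of generality $Q_j \prec Q_i$. Let $Q_i = Q_{s_1} \succ Q_{s_2} \succ \cdots \succ Q_{s_k}$ be the chain used when $z$ was added to $Z_i$, and set $F = R^{s_k}_3$ if $k \geq 2$, or $F = R^i_t$ if $k = 1$. Then $z \in F$, and by \eqref{eqn_noQinside} $F$ contains no element of $C_G$; the plan is to derive a contradiction by showing $Q_j \subseteq F$.

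I would first rule out $Q_j = Q_{s_m}$ for every $m$: the case $m = 1$ is excluded as $Q_j \neq Q_i$, and for $m \geq 2$ the chain construction (via $Q_{s_{l+1}} \subseteq R^{s_l}_3$ for $l \geq 2$) yields the nested containments $F = R^{s_k}_3 \subseteq R^{s_{k-1}}_3 \subseteq \cdots \subseteq R^{s_m}_3 = R^j_3$, so $z \in F \subseteq R^j_3$ would contradict \eqref{eqn_Zbounded} applied to $Z_j$. Since $F$ also lies in a bounded face of each $Q_{s_m}$ (namely $R^i_t$ for $m=1$, and $R^{s_m}_3$ for $m \geq 2$), each $Q_{s_m}$ belongs to $C_G^z$ and is therefore comparable with $Q_j$. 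Let $m^* = \max\{m : Q_j \prec Q_{s_m}\} \geq 1$. If $m^* < k$ then $Q_{s_{m^*+1}} \prec Q_j \prec Q_{s_{m^*}}$, which directly contradicts the property noted in the chain construction that no element of $C_G$ lies strictly between two consecutive chain members.

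Hence $m^* = k$, so $Q_j \prec Q_{s_k}$ and $Q_j$ sits in some bounded face $R^{s_k}_l$ of $Q_{s_k}$; since $z \in F = R^{s_k}_3$ and $z$ also lies in a bounded face of $Q_j \subseteq R^{s_k}_l$, we have $z \in R^{s_k}_3 \cap R^{s_k}_l$. Generically this forces $l = 3$, giving $Q_j \subseteq F$ and contradicting \eqref{eqn_noQinside} (the case $k = 1$ is identical with $R^i_t$ in place of $R^{s_k}_3$). The main obstacle I expect is the boundary case where $z$ is itself a vertex of $Q_{s_k}$, i.e.\ $z \in X_{u_{s_k} v_{s_k}}$, so that $z$ lies on the common boundary of two adjacent faces and $l \in \{2,3,4\}$ a priori. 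I would dispose of this by an additional short argument: in this case $z$ is simultaneously a vertex of both $Q_{s_k}$ and $Q_j$, so the bounded face of $Q_j$ containing $z$ is pinned to one side of the edges $zu_{s_k}, zv_{s_k}$, and combining this with $z \notin R^j_3$ from \eqref{eqn_Zbounded} should rule out $l \in \{2, 4\}$ and force $l = 3$.
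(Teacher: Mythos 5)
Your proof follows essentially the same strategy as the paper's: take $z$ in the intersection, use Corollary \ref{cor_totalorder} and (\ref{eqn_Zbounded}) to show $Q_i,Q_j$ are comparable, analyze the chain $Q_{s_1}\succ\cdots\succ Q_{s_k}$ built for $z\in Z_i$, and derive a contradiction. Your reorganization is slightly different but logically equivalent: the paper establishes $\{Q_{s_1},\dots,Q_{s_k}\}=\{Q_l\in C_G^z:Q_l\preceq Q_i\}$ (using (\ref{eqn_noQinside}) and the ``no element strictly between'' property) and then concludes $Q_j=Q_{s_m}$ for some $m\geq 2$, hence $z\in R_3^j$, contradicting (\ref{eqn_Zbounded}); you split this into ruling out $Q_j=Q_{s_m}$ (by the same $z\in R_3^j$ contradiction) and separately ruling out $Q_j\prec Q_{s_k}$ via (\ref{eqn_noQinside}). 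Both are valid.

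You have, however, put your finger on a real subtlety in the last step that the paper's proof does not explicitly address, namely ruling out $Q_j\prec Q_{s_k}$ with $Q_j$ lying in a face $R_l^{s_k}$, $l\neq 3$, while $z$ lies on the boundary between $R_3^{s_k}$ and $R_l^{s_k}$ (i.e.\ $z\in\{y_3,y_4\}\subset X_{u_{s_k}v_{s_k}}$). But your proposed resolution is not quite right: the invocation of $z\notin R_3^j$ does not in fact connect to ruling out $l\in\{2,4\}$, and the claim that ``the bounded face of $Q_j$ containing $z$ is pinned to one side of the edges $zu_{s_k}, zv_{s_k}$'' is, if anything, backwards. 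The correct observation is a local one at $z$. Since $z\in R_3^{s_k}\cap R_l^{s_k}$ with $l\neq 3$ and $z\not\in\{u_{s_k},v_{s_k}\}$, the vertex $z$ has degree two in $Q_{s_k}$, and the two edges $zu_{s_k},zv_{s_k}$ split a small disc about $z$ into one arc lying in the interior of $R_3^{s_k}$ and one in the interior of $R_l^{s_k}$. On the other hand, either $z$ is in the open interior of a bounded face of $Q_j$, in which case the whole disc lies in the open interior of $R_l^{s_k}$ (impossible, since part of it is in $R_3^{s_k}$), or $z$ is a vertex of $Q_j$, say $z=y_m^j$. In the latter case $z\notin R_1^j$ forces $m\geq 3$, so both faces $R_{m-1}^j,R_m^j$ of $Q_j$ incident to $z$ are bounded; hence the whole disc about $z$ lies in the bounded region of $Q_j$ and therefore in $R_l^{s_k}$ — again impossible. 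Thus $l=3$ after all, and $Q_j\subseteq R_3^{s_k}$ contradicts (\ref{eqn_noQinside}) as you intended.

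So: correct approach, same as the paper's; a legitimate and well-spotted concern about the boundary case; but the fix you sketched does not close the gap, and should be replaced by the local disc argument above.
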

\begin{proof}
Suppose that $z \in Z_i \cap Z_j$. 
By Lemma \ref{lem_ordering2} and (\ref{eqn_Zbounded}) it follows that $Q_i$ and $Q_j$ are comparable with respect to $\preceq$. Without loss of generality suppose that $Q_j \prec Q_i$. Now consider the sequence $Q_{s_1} \succ \dots \succ Q_{s_k}$ that is constructed during the selection of $z$ for $Z_i$. Since $z$ does not lie in the unbounded face of any $Q_{s_m}$, we see that for $m = 1,\dots,k$, $Q_{s_m} \in C^z_G$ (as defined in Corollary \ref{cor_totalorder}). Moreover, using (\ref{eqn_noQinside}), and since there is no element of $C_G$ strictly between $Q_{s_m} $ and $Q_{s_{m+1}}$ it follows that $\{ Q_{s_1},\ldots,Q_{s_k}\}  = \{Q_l \in C^z_G: Q_l \preceq Q_i\}$. In particular, since $Q_j \in C^z_G$ and $Q_j \prec Q_i$, we have  $Q_j = Q_{s_m}$ for some $ m \geq 2$. But this implies that $z \in R^j_3$ and, since $z \in Z_j$, this  contradicts (\ref{eqn_Zbounded}).
\end{proof}

\begin{proof}[Proof of Theorem \ref{thm_main_bound}.]
Suppose that $r_1,\dots,r_c$ and $Z_1,\dots,Z_c$ are as in the discussion above.

By Lemma \ref{lem_vertices} we have 
\begin{equation*}
    |V(P)| \leq 2b + \sum_{uv \in B}r_{uv}
\end{equation*}
Now, using $|Z_i| \geq (r_i-5)/2$, we have 
\begin{eqnarray*} 
\sum_{uv \in B} r_{uv} &=& \sum_{\{uv:r_{uv} \leq 5\}}r_{uv} +\sum_{i = 1}^c r_i  \\
&\leq & 5(b-c) +\sum_{i=1}^c (2|Z_i| +5) \\
&=& 5b + 2 \sum_{i=1}^c |Z_i|
\end{eqnarray*}
By Lemma \ref{lem_disjointZ}, $\sum_{i=1}^c |Z_i| = |\cup_{i=1}^c Z_i|$. 
Now suppose that $Q_l$ is maximal with respect to $\preceq$ in $C_G$ (there is at least one such $l$). Observe $u_l,v_l \not\in \cup_{i=1}^c Z_i$ since $u_l,v_l \in R^i_1$ for $i = 1,\dots,c$. 
Thus $|\cup_{i=1}^c Z_i| \leq |V(B)| - 2 \leq 2b-2$ 
and
$\sum_{uv \in B} r_{uv} \leq 9b-4$ as required.
\end{proof}

\section{Unibraced triangulations}
\label{s:unibrace}
A unibraced (i.e. \( b=1 \)) triangulation must have at least five vertices.
Up to homeomorphism of the sphere there is a unique triangulation
with five vertices. It follows immediately that 
up to homeomorphisms, there is exactly one unibraced triangulation
with five vertices. Observe that in this unibraced triangulation the 
three vertices that are not in the brace must span a nonfacial triangle
of \( P \). The following is implicit in \cite{JordanTanigawa}. Also see 
\cite{CJ} for related results.

\begin{thm}
    \label{thm_unibraced}
    Every unibraced triangulation with at least six vertices has 
    a contractible edge. Equivalently, the unibraced triangulation 
    with five vertices is the unique irreducible unibraced triangulation.
\end{thm}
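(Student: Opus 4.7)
The plan is to exploit the fact that both Lemma \ref{lem_vertices} and Lemma \ref{lem_face_with_no_braced_vertices} collapse to something immediate when $b=1$, thereby forcing $|V(P)| \leq 5$ for any irreducible unibraced triangulation.

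Let $G = (P, \{uv\})$ be irreducible. Because $V(B) = \{u,v\}$, the set $V(B) \setminus \{u,v\}$ is empty, so every face of $Q_{uv}$ vacuously contains no vertex of $V(B) \setminus \{u,v\}$. Applying Lemma \ref{lem_face_with_no_braced_vertices} I would then get $r_{uv} \leq 3$; the boundary cases $r_{uv} \leq 1$, where the faces of $Q_{uv}$ are not defined, are ruled out by Lemma \ref{lem_vertices} itself since they would force $|V(P)| \leq 3$. Lemma \ref{lem_vertices} next yields $V(P) = \{u,v\} \cup X_{uv}$, and hence $|V(P)| \leq 2 + r_{uv} \leq 5$. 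This already proves the first formulation of the theorem: any unibraced triangulation with at least six vertices fails to be irreducible and so has an edge of $P$ that is contractible in $G$.

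For the equivalent reformulation I would combine this bound with the trivial observation that every unibraced triangulation has at least five vertices (the triangulations on three or four vertices are the complete graphs $K_3$ and $K_4$, which admit no non-edge to brace). Therefore every irreducible unibraced triangulation has exactly five vertices, and the paragraph preceding the theorem has already noted that, up to homeomorphism of the sphere, there is a unique such configuration. To finish one only needs to verify that this 5-vertex bipyramid with a polar brace really is irreducible: its only edges that are contractible in $P$ are the six pole-to-equator edges, and each of these lies in a 3-cycle with the brace (namely $uv y$ for $y$ on the equator), so no edge of $P$ is contractible in $G$.

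There is no serious obstacle here; the statement is essentially a corollary of the two lemmas already established in the previous section. The only piece that calls for independent work is the short irreducibility check for the braced bipyramid, which reduces to a one-line inspection of its 3-cycles.
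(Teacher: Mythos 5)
Your proposal is correct and takes essentially the same route as the paper: both derive $r_{uv}\leq 3$ from Lemma~\ref{lem_face_with_no_braced_vertices} (which applies vacuously because $V(B)\setminus\{u,v\}=\emptyset$) and then invoke Lemma~\ref{lem_vertices} to bound $|V(P)|\leq 5$. The only cosmetic difference is that the paper sidesteps your $r_{uv}\leq 1$ boundary concern by first deriving $r_{uv}\geq 3$ directly from $|V(P)|\geq 5$ via Lemma~\ref{lem_vertices}, and it leaves the irreducibility of the braced bipyramid implicit rather than checking it as you do.
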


\begin{proof}
    Suppose that \( G \) is an irreducible unibraced triangulation
    with brace \( uv \). Since \( G \) has at least five vertices, it follows from 
		Lemma \ref{lem_vertices} that $r_{uv}\geq 3$.
    By Lemma \ref{lem_face_with_no_braced_vertices}, \( r_{uv} \leq 3 \). 
		Thus, $r_{uv}=3$.
    The conclusion now follows from Lemma \ref{lem_vertices}.
\end{proof}

With a little more effort 
we can strengthen Theorem \ref{thm_unibraced} as follows.

\begin{lem}
    \label{lem_cont_away_from_triangle}
    Suppose that \( G  = (P,B) \) is a unibraced triangulation with 
    at least six vertices. Let \( T \) be a face of \( P \). There is
    some edge of \( P \), not in \( T \), that is contractible 
    in \( G \).
\end{lem}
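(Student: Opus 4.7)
The plan is to argue by contradiction, assuming no contractible-in-$G$ edge of $P$ lies outside $T$. The driving observation is that, since the only brace is $uv$, a contractible-in-$P$ edge $e$ fails to be contractible in $G$ exactly when $e$ lies in a $3$-cycle containing the brace, i.e.\ when $e \in \{ux, vx : x \in X_{uv}\}$. In particular, both endpoints of any such $e$ must belong to $\{u,v\} \cup X_{uv}$.

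Step one is to show that $V(P) = \{u,v\} \cup X_{uv}$. For each $w \in V(T)$, Lemma~\ref{lem_vertex_incident_good_edge} supplies a contractible-in-$P$ edge $e_w$ at $w$ that is not contained in $T$; by hypothesis $e_w$ is not contractible in $G$, and the key observation forces $w \in \{u,v\} \cup X_{uv}$. For each $w \in V(P) \setminus V(T)$, any edge at $w$ is automatically not in $T$, and Lemma~\ref{lem_good_edge_not_in_face} gives a contractible-in-$P$ edge at $w$; the same reasoning yields $w \in \{u,v\} \cup X_{uv}$. Combined with the trivial reverse inclusion, this gives $V(P) = \{u,v\} \cup X_{uv}$, so $|V(P)| = r_{uv}+2$, and the hypothesis $|V(P)|\geq 6$ forces $r_{uv} \geq 4$.

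Next I would identify $P$ explicitly. Since there are no vertices interior to any face $R_i = u y_i v y_{i+1}$ of $Q_{uv}$, each $R_i$ must be triangulated by a single diagonal; that diagonal cannot be $uv$ (the brace, not in $E(P)$), so it must be $y_iy_{i+1}$. Thus $P$ is precisely the $r_{uv}$-bipyramid with apices $u,v$ and base cycle $y_1y_2\cdots y_{r_{uv}}$, and $T$ is one of its triangular faces.

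The contradiction now falls out. Because $r_{uv}\geq 4$, for each $i$ we have $N_P(y_i) \cap N_P(y_{i+1}) = \{u,v\}$, so every base edge $y_iy_{i+1}$ is contractible in $P$; as it avoids $\{u,v\}$, it is not in any $3$-cycle with the brace and is therefore contractible in $G$. Since $T$ is triangular, it contains exactly one base edge, so the remaining $r_{uv}-1 \geq 3$ base edges are contractible-in-$G$ edges lying outside $T$, contradicting our assumption. The main obstacle is the first step: seeing how the combined use of Lemmas~\ref{lem_vertex_incident_good_edge} and~\ref{lem_good_edge_not_in_face} pins $V(P)$ down to $\{u,v\} \cup X_{uv}$; once this is achieved, the bipyramid structure and the contradictory edges are immediate.
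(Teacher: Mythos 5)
Your proof is correct and takes a genuinely different route from the paper. The paper argues constructively: it invokes Lemma~\ref{lem_vertex_incident_good_edge} to get three contractible-in-$P$ edges $e_1,e_2,e_3$ at the corners of $T$, assumes all three are adjacent to the brace, and then performs a case analysis on their mutual incidence (two sharing a vertex versus all three sharing one), deriving planarity contradictions in most subcases and only in the last subcase recognising a bipyramid and exhibiting an equatorial edge. Your proof instead argues by contradiction globally: assuming every contractible-in-$G$ edge of $P$ lies in $T$, you use the observation that a contractible-in-$P$ edge not contractible in $G$ must have both endpoints in $\{u,v\}\cup X_{uv}$, then combine Lemma~\ref{lem_vertex_incident_good_edge} (for corners of $T$) with Lemma~\ref{lem_good_edge_not_in_face} (for everything else) to force $V(P)=\{u,v\}\cup X_{uv}$, hence $r_{uv}\ge 4$, hence $P$ is an $r_{uv}$-bipyramid, and finally exhibit base edges outside $T$ that are contractible in $G$. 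Your route bypasses the case analysis entirely and gets to the bipyramid structure in one step; the paper's route, while longer, is more hands-on and does not presuppose the contradiction hypothesis. Both end at the same bipyramid picture. One small point worth flagging: in your first step you implicitly use that for $w\notin V(T)$ every edge at $w$ automatically avoids $T$, which is true since $T$ is a face and its edge set joins only vertices of $T$ --- this is fine, but is the step where Lemma~\ref{lem_good_edge_not_in_face} rather than Lemma~\ref{lem_vertex_incident_good_edge} is doing the work, so it deserves the explicit mention you give it.
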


\begin{proof}
    Let \( u_i, i = 1,2,3 \) be the vertices of \( T \). By Lemma 
    \ref{lem_vertex_incident_good_edge} there are edges \( e_i, i = 1,2,3 \)
    such that \( e_i \) is incident with $u_i$, $e_i$ is not an edge of \( T \) 
		and \( e_i \) is contractible in \( P \). If any \( e_i \) is not adjacent 
		to the brace then we are done. So we may as well assume that each
    \( e_i \) is adjacent to the brace for \( i = 1,2,3 \).

    It follows, in particular, that  \( e_1,e_2,e_3 \) cannot be 
    pairwise non-incident, so there are two cases to consider. 
    Either (a) \( e_1 \) and 
    \( e_2 \) share a vertex and \( e_3 \) is non-adjacent to 
    \( e_1 \) and \( e_2 \), or, (b) \( e_1,e_2,e_3 \) have 
    a common vertex.

    In case (b), let \( p  \) be the common vertex of \( e_1,e_2,e_3 \). 
    Clearly \( p \) must be one vertex of the brace and it follows 
    from the planarity of \( P \)  
    that the other vertex of the brace must lie in the interior of 
    the face \( T \) (since it is also adjacent in \( P \)
    to \( u_1,u_2,u_3 \)). 
    This contradicts the fact that \( T \) is a face of \( P \) and 
    so has no vertices in its interior, by definition (see Figure~\ref{fig:unibracedallinp}).

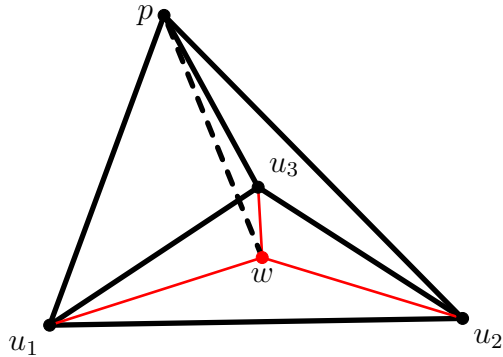
\begin{figure}[htp]
        \centering

        \definecolor{ududff}{rgb}{0,0,0}
    \begin{tikzpicture}[scale=0.9,line cap=round,line join=round,>=triangle 45,x=1cm,y=1cm]
    \draw [line width=2pt] (-10.31,3.44)-- (-13.39,1.4);
    \draw [line width=2pt] (-13.39,1.4)-- (-7.29,1.5);
    \draw [line width=2pt] (-7.29,1.5)-- (-10.31,3.44);
    \draw [line width=1pt,red] (-10.25,2.4)-- (-10.31,3.44);
		
    \draw [line width=1pt,red] (-10.25,2.4)-- (-7.29,1.5);
    \draw [line width=1pt,red] (-10.25,2.4)-- (-13.39,1.4);

    \draw [line width=2pt,dash pattern=on 5pt off 5pt] (-10.25,2.4)-- (-11.7,5.98);

    \draw [line width=2pt] (-10.31,3.44)-- (-11.7,5.98);
    \draw [line width=2pt] (-13.39,1.4)-- (-11.7,5.98);
    \draw [line width=2pt] (-7.29,1.5)-- (-11.7,5.98);

    \begin{scriptsize}
    \draw [fill=ududff] (-10.31,3.44) circle (2.5pt);
    \draw [fill=ududff] (-13.39,1.4) circle (2.5pt);
    \draw [fill=ududff] (-7.29,1.5) circle (2.5pt);
    \draw [red, fill] (-10.25,2.4) circle (2.5pt);
    \draw [fill=ududff] (-11.7,5.98) circle (2.5pt);
    \end{scriptsize}
		
		\node [below left] at (-13.39,1.4) {$u_1$};
    \node [below right] at (-7.29,1.5) {$u_2$};
    \node [above right] at (-10.31,3.44) {$u_3$};
    \node [below] at (-10.25,2.4) {$w$};
    \node [left] at (-11.7,5.98) {$p$};
    \end{tikzpicture}

    \caption{If $u_1p, u_2p,u_3p$ are uncontractible edges, then $V(B)=\{p,w\}$ where $w$ lies in the interior of the face $T$.}
        \label{fig:unibracedallinp}
\end{figure}

    Thus, only case (a) remains. Let \( p \) be the common vertex of 
    \( e_1 \) and \( e_2 \) and let \( q \) be the vertex of \( e_3 \) 
    that is different from \( u_3 \). It is clear that the brace 
    must be either \( pq \) or \( pu_3 \). If \( pq \) is the brace
    then \( u_1q, u_2q ,pu_3 \) must all be edges of \( P \). It is 
    not hard to see that this contradicts the planarity of \( P \).

    Finally suppose that \( pu_3 \) is the brace. 
    Note that \( u_1,u_2 \in X_{pu_3} \). 
    Let \( v \) 
    be a vertex of \( G \) that is not in \( \{u_1,u_2,u_3,p\} \). 
    By Lemma \ref{lem_good_edge_not_in_face} there are two 
    edges of \( P \) incident to \( v \) that are contractible in \( P \).
    If any such edge is not adjacent to the brace \( pu_3 \) then 
    we are done. Thus we may assume that \( V(G) = \{p,u_3\} \cup X_{pu_3} \)
    and that \( G \) is in fact a braced $n$-gonal bipyramid  
    where the brace joins the two poles. 
    Since \( G \) has at least 
    six vertices, all of the equatorial edges are contractible in 
    \( G \) and so
    the conclusion of the lemma is true in 
    this case also.
\end{proof}

\section{Doubly braced triangulations}
\label{sec_b2}

In this section we will show that in the case \( b=2 \)
there are  five irreducibles. These are shown in Figures 
\ref{fig_doubly_braced_oct}, \ref{fig_hex_disjoint}, 
\ref{fig_hex_adjacent}, \ref{fig_ireed_seven} and 
\ref{fig_ireed_seven_non}. The braces are indicated
with dotted lines.

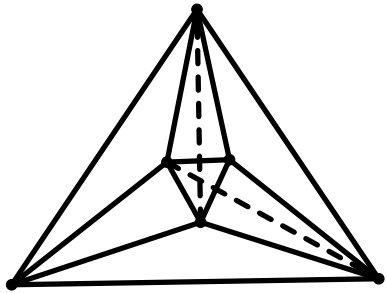
\begin{figure}[htp]
        \centering

        \definecolor{ududff}{rgb}{0,0,0}
    \begin{tikzpicture}[scale=0.8,line cap=round,line join=round,>=triangle 45,x=1cm,y=1cm]
    \draw [line width=2pt] (-10.31,5.98)-- (-13.39,1.4);
    \draw [line width=2pt] (-13.39,1.4)-- (-7.29,1.5);
    \draw [line width=2pt] (-7.29,1.5)-- (-10.31,5.98);
    \draw [line width=2pt] (-10.81,3.44)-- (-9.77,3.48);
    \draw [line width=2pt] (-9.77,3.48)-- (-10.25,2.44);
    \draw [line width=2pt] (-10.25,2.44)-- (-10.81,3.44);
    \draw [line width=2pt] (-10.81,3.44)-- (-10.31,5.98);
    \draw [line width=2pt] (-10.81,3.44)-- (-13.39,1.4);
    \draw [line width=2pt] (-9.77,3.48)-- (-10.31,5.98);
    \draw [line width=2pt] (-9.77,3.48)-- (-7.29,1.5);
    \draw [line width=2pt] (-10.25,2.44)-- (-7.29,1.5);
    \draw [line width=2pt] (-10.25,2.44)-- (-13.39,1.4);
    \draw [line width=2pt,dash pattern=on 5pt off 5pt] (-10.81,3.44)-- (-7.29,1.5);
    \draw [line width=2pt,dash pattern=on 5pt off 5pt] (-10.25,2.44)-- (-10.31,5.98);
    \begin{scriptsize}
    \draw [fill=ududff] (-10.31,5.98) circle (2.5pt);
    \draw [fill=ududff] (-13.39,1.4) circle (2.5pt);
    \draw [fill=ududff] (-7.29,1.5) circle (2.5pt);
    \draw [fill=ududff] (-10.81,3.44) circle (2.5pt);
    \draw [fill=ududff] (-9.77,3.48) circle (2.5pt);
    \draw [fill=ududff] (-10.25,2.44) circle (2.5pt);
    \end{scriptsize}
    \end{tikzpicture}

    \caption{The doubly braced octahedron.}
        \label{fig_doubly_braced_oct}
\end{figure}

\begin{figure}[htp]
    \definecolor{ududff}{rgb}{0,0,0}
    \begin{tikzpicture}[scale=0.8,line cap=round,line join=round,>=triangle 45,x=1cm,y=1cm]
    \draw [line width=2pt] (-3.04,0.08)-- (0.12,5.56);
    \draw [line width=2pt] (0.12,5.56)-- (3.54,0.24);
    \draw [line width=2pt] (3.54,0.24)-- (-3.04,0.08);
    \draw [line width=2pt] (-3.04,0.08)-- (0.06,0.94);
    \draw [line width=2pt] (0.06,0.94)-- (3.54,0.24);
    \draw [line width=2pt] (1.16,2.94)-- (-3.04,0.08);
    \draw [line width=2pt] (1.16,2.94)-- (0.06,0.94);
    \draw [line width=2pt] (1.16,2.94)-- (3.54,0.24);
    \draw [line width=2pt] (0.12,5.56)-- (1.16,2.94);
    \draw [line width=2pt,dash pattern=on 5pt off 5pt] (0.12,5.56)-- (0.06,0.94);
    \draw [line width=2pt] (0.06,0.94)-- (0.86,1.58);
    \draw [line width=2pt] (0.86,1.58)-- (1.16,2.94);
    \draw [line width=2pt] (0.86,1.58)-- (3.54,0.24);
    \draw [line width=2pt,dash pattern=on 5pt off 5pt] (0.86,1.58)-- (-3.04,0.08);
    \begin{scriptsize}
    \draw [fill=ududff] (-3.04,0.08) circle (2.5pt);
    \draw [fill=ududff] (0.12,5.56) circle (2.5pt);
    \draw [fill=ududff] (3.54,0.24) circle (2.5pt);
    \draw [fill=ududff] (0.06,0.94) circle (2.5pt);
    \draw [fill=ududff] (1.16,2.94) circle (2.5pt);
    \draw [fill=ududff] (0.86,1.58) circle (2.5pt);
    \end{scriptsize}
    \end{tikzpicture}
    \caption{Capped hexahedron with disjoint braces}
    \label{fig_hex_disjoint}
\end{figure}
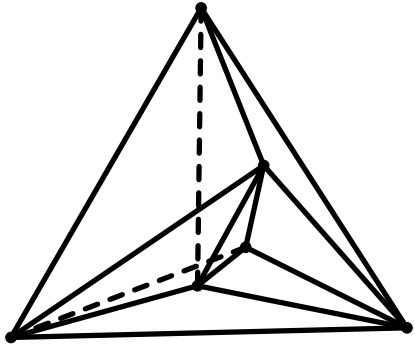

\begin{figure}[htp]
    \definecolor{ududff}{rgb}{0,0,0}
    \begin{tikzpicture}[scale=0.8,line cap=round,line join=round,>=triangle 45,x=1cm,y=1cm]
    \draw [line width=2pt] (-3.04,0.08)-- (0.12,5.56);
    \draw [line width=2pt] (0.12,5.56)-- (3.54,0.24);
    \draw [line width=2pt] (3.54,0.24)-- (-3.04,0.08);
    \draw [line width=2pt] (-3.04,0.08)-- (0.06,0.94);
    \draw [line width=2pt] (0.06,0.94)-- (3.54,0.24);
    \draw [line width=2pt] (1.16,2.94)-- (-3.04,0.08);
    \draw [line width=2pt] (1.16,2.94)-- (0.06,0.94);
    \draw [line width=2pt] (1.16,2.94)-- (3.54,0.24);
    \draw [line width=2pt] (0.12,5.56)-- (1.16,2.94);
    \draw [line width=2pt,dash pattern=on 5pt off 5pt] (0.12,5.56)-- (0.06,0.94);
    \draw [line width=2pt] (0.06,0.94)-- (0.86,1.58);
    \draw [line width=2pt] (0.86,1.58)-- (1.16,2.94);
    \draw [line width=2pt] (0.86,1.58)-- (3.54,0.24);
    \draw [line width=2pt,dash pattern=on 5pt off 5pt] (0.86,1.58)-- (0.12,5.56);
    \begin{scriptsize}
    \draw [fill=ududff] (-3.04,0.08) circle (2.5pt);
    \draw [fill=ududff] (0.12,5.56) circle (2.5pt);
    \draw [fill=ududff] (3.54,0.24) circle (2.5pt);
    \draw [fill=ududff] (0.06,0.94) circle (2.5pt);
    \draw [fill=ududff] (1.16,2.94) circle (2.5pt);
    \draw [fill=ududff] (0.86,1.58) circle (2.5pt);
    \end{scriptsize}
    \end{tikzpicture}
    \caption{Capped hexahedron with adjacent braces}
    \label{fig_hex_adjacent}
\end{figure}
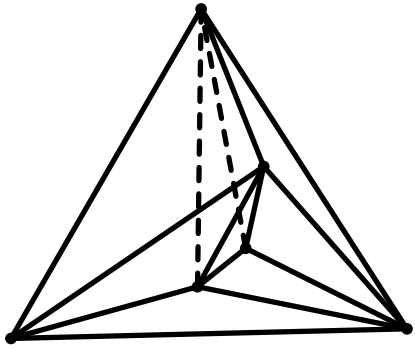

\begin{figure}[htp]
    \definecolor{ududff}{rgb}{0,0,0}
    \begin{tikzpicture}[scale=0.7, line cap=round,line join=round,>=triangle 45,x=1cm,y=1cm]
    \draw [line width=2pt] (-3.32,-0.6)-- (5.46,-0.88);
    \draw [line width=2pt] (5.46,-0.88)-- (1.92,5.98);
    \draw [line width=2pt] (1.92,5.98)-- (-3.32,-0.6);
    \draw [line width=2pt] (1.3,1.28)-- (1.12,2.6);
    \draw [line width=2pt] (1.12,2.6)-- (1.1,4.08);
    \draw [line width=2pt] (1.1,4.08)-- (1.92,5.98);
    \draw [line width=2pt] (1.3,1.28)-- (-3.32,-0.6);
    \draw [line width=2pt] (1.12,2.6)-- (-3.32,-0.6);
    \draw [line width=2pt] (1.1,4.08)-- (-3.32,-0.6);
    \draw [line width=2pt] (1.1,4.08)-- (5.46,-0.88);
    \draw [line width=2pt] (1.12,2.6)-- (5.46,-0.88);
    \draw [line width=2pt] (1.3,1.28)-- (5.46,-0.88);
    \draw [line width=2pt] (1.3,1.28)-- (2.5,-0.06);
    \draw [line width=2pt] (2.5,-0.06)-- (-3.32,-0.6);
    \draw [line width=2pt] (2.5,-0.06)-- (5.46,-0.88);
    \draw [line width=2pt,dash pattern=on 5pt off 5pt] (1.12,2.6)-- (1.92,5.98);
    \draw [line width=2pt,dash pattern=on 5pt off 5pt] (1.12,2.6)-- (2.5,-0.06);
    \begin{scriptsize}
    \draw [fill=ududff] (-3.32,-0.6) circle (2.5pt);
    \draw [fill=ududff] (5.46,-0.88) circle (2.5pt);
    \draw [fill=ududff] (1.92,5.98) circle (2.5pt);
    \draw [fill=ududff] (1.3,1.28) circle (2.5pt);
    \draw [fill=ududff] (1.12,2.6) circle (2.5pt);
    \draw [fill=ududff] (1.1,4.08) circle (2.5pt);
    \draw [fill=ududff] (2.5,-0.06) circle (2.5pt);
    \end{scriptsize}
    \end{tikzpicture}
    \caption{Irreducible with seven vertices and adjacent braces}
    \label{fig_ireed_seven}
\end{figure}

\begin{figure}[htp]
    \definecolor{ududff}{rgb}{0,0,0}
    \begin{tikzpicture}[scale=0.7,line cap=round,line join=round,>=triangle 45,x=1cm,y=1cm]
    \draw [line width=2pt] (-3.32,-0.6)-- (5.46,-0.88);
    \draw [line width=2pt] (5.46,-0.88)-- (1.92,5.98);
    \draw [line width=2pt] (1.92,5.98)-- (-3.32,-0.6);
    \draw [line width=2pt] (1.3,1.28)-- (1.12,2.6);
    \draw [line width=2pt] (1.12,2.6)-- (1.1,4.08);
    \draw [line width=2pt] (1.1,4.08)-- (1.92,5.98);
    \draw [line width=2pt] (1.3,1.28)-- (-3.32,-0.6);
    \draw [line width=2pt] (1.12,2.6)-- (-3.32,-0.6);
    \draw [line width=2pt] (1.1,4.08)-- (-3.32,-0.6);
    \draw [line width=2pt] (1.1,4.08)-- (5.46,-0.88);
    \draw [line width=2pt] (1.12,2.6)-- (5.46,-0.88);
    \draw [line width=2pt,dash pattern=on 5pt off 5pt] (1.3,1.28)-- (5.46,-0.88);
    \draw [line width=2pt] (1.3,1.28)-- (2.5,-0.06);
    \draw [line width=2pt] (2.5,-0.06)-- (-3.32,-0.6);
    \draw [line width=2pt] (2.5,-0.06)-- (5.46,-0.88);
    \draw [line width=2pt,dash pattern=on 5pt off 5pt] (1.12,2.6)-- (1.92,5.98);
    \draw [line width=2pt] (1.12,2.6)-- (2.5,-0.06);
    \begin{scriptsize}
    \draw [fill=ududff] (-3.32,-0.6) circle (2.5pt);
    \draw [fill=ududff] (5.46,-0.88) circle (2.5pt);
    \draw [fill=ududff] (1.92,5.98) circle (2.5pt);
    \draw [fill=ududff] (1.3,1.28) circle (2.5pt);
    \draw [fill=ududff] (1.12,2.6) circle (2.5pt);
    \draw [fill=ududff] (1.1,4.08) circle (2.5pt);
    \draw [fill=ududff] (2.5,-0.06) circle (2.5pt);
    \end{scriptsize}
    \end{tikzpicture}
    \caption{Irreducible with seven vertices and non-adjacent braces}
    \label{fig_ireed_seven_non}
\end{figure}

First we observe that there are precisely two distinct triangulations of the
sphere with six vertices. They are the octahedron and the capped hexahedron. 
It is not hard to deduce that there are three distinct braced triangulations with 
six vertices and that each of these is irreducible since any braced triangulation 
with two braces must have at least six vertices. 
Observe that each of these three irreducibles has underlying graph isomorphic 
to \( K_6 \) with one edge removed (which is, of course, the unique six vertex graph
with 14 edges).
The two seven vertex irreducibles also have isomorphic underlying graphs.
In those cases the graph is isomorphic to that obtained by gluing 
two copies of \( K_5 \) together along a \( K_3 \).

\begin{thm}
    \label{thm_irred_seven}
    Any irreducible braced triangulation with 
    two braces is isomorphic 
    to one of the examples shown in Figures \ref{fig_doubly_braced_oct},
    \ref{fig_hex_disjoint}, \ref{fig_hex_adjacent}, \ref{fig_ireed_seven}
    or \ref{fig_ireed_seven_non}.
\end{thm}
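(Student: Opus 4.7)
The plan is to exploit the observation (noted just before the theorem statement) that the 6-vertex irreducibles are exactly the three listed in Figures \ref{fig_doubly_braced_oct}, \ref{fig_hex_disjoint}, and \ref{fig_hex_adjacent}, and then to prove that the only larger irreducibles are the two seven-vertex examples in Figures \ref{fig_ireed_seven} and \ref{fig_ireed_seven_non}. By Lemma \ref{lem_vertices}, every vertex of $P$ lies in $V(B) \cup X_{e_1} \cup X_{e_2}$, i.e.\ in $V(Q_{e_1} \cup Q_{e_2})$, where $e_1, e_2$ are the two braces. A first application of Lemma \ref{lem_face_with_no_braced_vertices} bounds $r_i = |X_{e_i}|$: in the adjacent-braces case, $r_i \leq 3$ (otherwise every face of $Q_{e_i}$ would have to contain the unique vertex of $V(B) \setminus \{u_i, v_i\}$, but that vertex lies in at most two faces of $Q_{e_i}$), and in the disjoint-braces case the analogous count gives $r_i \leq 4$.

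I would then split the analysis into the two cases. In the \emph{adjacent-braces case}, with braces $uv$ and $uw$ sharing the vertex $u$, I would analyse how $Q_{uv}$ and $Q_{uw}$ meet in the sphere at $u$ and enumerate the possibilities for $(r_{uv}, r_{uw}, |X_{uv} \cap X_{uw}|)$, using that $v \notin X_{uw}$ and $w \notin X_{uv}$ since the braces do not lie in $P$. For each such triple, the combinatorial structure of $Q_{uv} \cup Q_{uw}$ in the sphere is essentially forced, and I would extract the resulting braced triangulation. In the \emph{disjoint-braces case}, the subcase $(r_{e_1}, r_{e_2}) = (4,4)$ is handled by a direct planarity argument (similar in spirit to Lemma \ref{lem_ruv_5}) showing that $Q_{e_1} \cup Q_{e_2}$ must be an octahedral graph, recovering the doubly-braced octahedron; the remaining subcases, with smaller $r_i$, would be enumerated analogously by tracking the positions of $\{u_2, v_2\}$ relative to the faces of $Q_{e_1}$.

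For each candidate configuration one must triangulate the faces of $Q_{e_1} \cup Q_{e_2}$ using $P$-edges without introducing any edge that is contractible in $G$. This is the main obstacle, and where most of the work lies: in most subcases an interior vertex or extra diagonal would be required to triangulate some face, and Lemmas \ref{lem_vertex_incident_good_edge} and \ref{lem_good_edge_not_in_face} then supply a $P$-contractible edge; since only the two braces $e_1, e_2$ are available to block contractions in $G$, such an edge is typically unblocked, which contradicts irreducibility. The configurations that survive this check turn out to be precisely the three 6-vertex examples and the two 7-vertex examples in the figures. Finally, irreducibility of each of the five candidates is a direct finite check, verifying that every $P$-contractible edge lies in a 3-cycle containing a brace.
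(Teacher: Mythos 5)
Your high-level bounds are correct: by Lemma~\ref{lem_face_with_no_braced_vertices} and the fact that any vertex of $V(B)\setminus\{u,v\}$ lies in at most two faces of $Q_{uv}$, you get $r_{uv}\leq 3$ if the braces share a vertex and $r_{uv}\leq 4$ otherwise, and Lemma~\ref{lem_vertices} then confines everything to $V(B)\cup X_{e_1}\cup X_{e_2}$. That much matches the spirit of the paper's bookkeeping. But the core of your argument is underspecified in exactly the places that carry the difficulty.

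Two concrete gaps. First, the phrase ``the combinatorial structure of $Q_{e_1}\cup Q_{e_2}$ in the sphere is essentially forced'' is not established; in fact this is where the real work lies, and the paper does not proceed by enumerating the triples $(r_{e_1}, r_{e_2}, |X_{e_1}\cap X_{e_2}|)$. Instead it proves a sequence of structural lemmas that pin down the configuration: $\max\{r_{uv},r_{wx}\}\geq 4$ forces the octahedron (Lemma~\ref{lem_octa}); at most one of $w,x$ lies in $\{u,v\}\cup X_{uv}$ (Lemma~\ref{lem_interior}); $\max\{r_{uv},r_{wx}\}\geq 3$ always (Lemma~\ref{lem_max_at_least3}, which your proposal lacks entirely and which is nontrivial, requiring a case analysis on where $w$ and $x$ sit relative to $R_1$ and what forces $y_1y_2\in E(P)$); and when $r_{uv}=3$ the set $X_{uv}$ spans a $3$-cycle in $P$ (Lemma~\ref{lem_three_cycle}). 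Second, and more importantly, your last paragraph hinges on ``such an edge is \emph{typically} unblocked.'' The cases where it is \emph{not} immediately unblocked are precisely the seven-vertex irreducibles, and the paper handles them by a device you have not reproduced: after the earlier lemmas have established the configuration of Figure~\ref{fig_max3}, when $x$ lies in the triangular region $T$ one deletes $v$ and $y_3$ to obtain a \emph{unibraced} triangulation $H$ with a distinguished facial triangle, and then invokes the bespoke Lemma~\ref{lem_cont_away_from_triangle} (proved in Section~\ref{s:unibrace} specifically for this purpose) to conclude $H$ has only five vertices. Without that reduction, or a genuine substitute, you cannot rule out additional interior vertices inside $T$, and your enumeration does not terminate cleanly.

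In short, your plan is a reasonable sketch of a brute-force route but is not a proof; the paper takes a different and more economical path through targeted structural lemmas and a reduction to the unibraced case, and these are exactly what is missing from your argument.
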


Clearly it suffices to show that any irreducible with at least 
seven vertices is isomorphic to one of the examples shown in 
Figures \ref{fig_ireed_seven} or \ref{fig_ireed_seven_non}.
The rest of this section is devoted to proving that.
We observe that by Theorem 
\ref{thm_quad_bound}, any irreducible doubly braced triangulation 
has at most 12 vertices. Thus, in principle at least, we have a finite list
of candidates among which we can search for irreducibles. However 
since there are a large number of doubly braced triangulations with
at most 12 vertices we find it desirable instead to 
narrow the search space by improving the general bound of Theorem 
\ref{thm_quad_bound} in the case \( b=2 \).

Suppose that \( G = (P,B) \) is an irreducible braced 
triangulation with \( B = \{uv,wx\} \). 
As above, let \( Q_{uv} \) be the bipartite sphere graph induced by 
\( K_{\{u,v\},X_{uv}} \) and, when $r_{uv}\geq 2$, let \( R_1,\cdots,R_{r_{uv}} \) be the faces of \( Q_{uv} \). Furthermore \( X_{uv} = \{y_1,y_2,\cdots,y_{r_{uv}}\} \)
where \( u,y_{i},v,y_{i+1} \) are the boundary vertices 
of \( R_{i} \) for \( i = 1,\cdots,r_{uv} \) (adopting the convention 
that \( y_{r_{uv}+1} = y_1 \)).

\begin{lem}
    \label{lem_octa}
		Let \( G = (P,B) \) be an irreducible braced 
triangulation with \( B = \{uv,wx\} \).
    Suppose that \( \max\{r_{uv},r_{wx}\} \geq 4 \). Then \( G \) is the 
    doubly braced octahedron (Figure \ref{fig_doubly_braced_oct}).
\end{lem}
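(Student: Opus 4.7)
The plan is to assume without loss of generality that $r_{uv} \geq 4$ and deduce that $G$ must be the doubly braced octahedron. The main engine is the contrapositive of Lemma \ref{lem_face_with_no_braced_vertices} together with a face-coverage count on $Q_{uv}$.

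First I would show $\{u,v\} \cap \{w,x\} = \emptyset$. If, say, $w=u$, then $V(B)\setminus\{u,v\} = \{x\}$ is a single vertex lying in at most two faces of $Q_{uv}$, contradicting the fact that each of the $r_{uv} \geq 4$ faces must contain some vertex of $V(B)\setminus\{u,v\}$. Hence $V(B)\setminus\{u,v\} = \{w,x\}$ has two elements. Since any vertex of $P$ lies in at most two faces of $Q_{uv}$, with equality precisely when it lies in $X_{uv}$, covering all $r_{uv}$ faces forces $r_{uv}\leq 4$, and equality forces both $w,x\in X_{uv}$ occupying non-adjacent positions in the cyclic order $y_1,\dots,y_4$ (otherwise one face is missed). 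Thus $r_{uv}=4$ and we may relabel so that $w = y_1$ and $x = y_3$.

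The next and most delicate step is ruling out any additional vertices of $P$. By Lemma \ref{lem_vertices} any $z \in V(P)\setminus\{u,v,w,x,y_2,y_4\}$ would have to lie in $X_{wx}$, so $z$ is adjacent in $P$ to both $w$ and $x$. Since $z\notin X_{uv}$, $z$ lies in the interior of some face $R_i$ of $Q_{uv}$, and planarity of $P$ forces every neighbor of $z$ to lie in $R_i$. But $w = y_1$ lies on $\partial R_i$ only for $i\in\{1,4\}$, while $x = y_3$ lies on $\partial R_i$ only for $i\in\{2,3\}$: no single face accommodates both, a contradiction. Hence $V(P) = \{u,v,w,x,y_2,y_4\}$.

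To finish, each face $R_i$ of $Q_{uv}$ is a quadrilateral of $P$ with no interior vertex, and since $uv \notin E(P)$, the triangulation $P$ is forced to insert the other diagonal of each $R_i$, giving edges $wy_2, y_2 x, xy_4, y_4 w$. Together with the eight edges of $Q_{uv}$ this yields exactly twelve edges on six vertices, namely the octahedron $K_{2,2,2}$; with $B = \{uv,wx\}$, this is precisely the graph of Figure \ref{fig_doubly_braced_oct}. I expect the no-interior-vertex argument to be the principal obstacle, since it requires combining the planarity constraint that neighbors of an interior vertex of $R_i$ all lie in $R_i$ with the observation that $w$ and $x$ sit on the boundaries of disjoint pairs of faces of $Q_{uv}$.
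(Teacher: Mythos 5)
Your proof is correct and follows the same basic strategy as the paper: use the face-coverage argument from Lemma \ref{lem_face_with_no_braced_vertices} to force $r_{uv}=4$ and $w,x\in X_{uv}$ in opposite positions, then use Lemma \ref{lem_vertices} to cap the vertex count at six. The main difference is in the finishing step. The paper closes by appealing to the prior enumeration of six-vertex irreducibles (there are exactly three, and only the doubly braced octahedron has $\max\{r_{uv},r_{wx}\}\geq 4$), whereas you carry the argument through by hand: you show each quadrilateral face $R_i$ has empty interior and, since $uv\notin E(P)$, must be triangulated by the diagonal $y_iy_{i+1}$, yielding exactly the octahedron. Your version is more self-contained in that it does not lean on the separate classification of six-vertex cases; the paper's version is shorter but defers part of the work to that earlier discussion. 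One small point worth noting: your preliminary step showing $\{u,v\}\cap\{w,x\}=\emptyset$ is implicit in the paper's assertion that $V(B)\setminus\{u,v\}=\{w,x\}$, and it is good that you made it explicit, since without it the coverage count would give $r_{uv}\leq 2$ rather than $r_{uv}\leq 4$.
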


\begin{proof}
    Without loss of generality, suppose that \( r_{uv} \geq 4 \).
    By Lemma \ref{lem_face_with_no_braced_vertices},
    we see that each \( R_i \) contains some element of 
    \( V(B) - \{u,v\} = \{w,x\} \). Note that \( w \) (and \( x \)) 
    can belong to at most two faces of $Q_{uv}$. It follows that \(  w,x \in X_{uv} \) and $r_{uv}=4$.
    Now since \( w,x \) do not belong to any common face of \( Q_{uv} \),
    it is clear that \( X_{wx} \subset \{u,v\} \cup X_{uv} \).
    Using Lemma \ref{lem_vertices} we see that \( |V(G)| = 6\) and the 
    conclusion follows easily since the doubly braced octahedron
    is the only six vertex irreducible that satisfies \( \max \{r_{uv},
    r_{wx}\} \geq 4\).
\end{proof}

Thus we may assume from now on that \( \max\{r_{wx},r_{uv}\} \leq 3 \).

\begin{lem}
    \label{lem_interior}
		Let \( G = (P,B) \) be an irreducible braced 
triangulation with \( B = \{uv,wx\} \).
    Suppose that \( \max\{r_{wx},r_{uv}\} \leq 3 \). 
    Then at most one of \( w,x \) is in \( \{u,v\} \cup X_{uv} \).
\end{lem}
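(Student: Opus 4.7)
The plan is to show that if both $w$ and $x$ lie in $\{u,v\} \cup X_{uv}$, a contradiction arises. I would first rule out the possibility that one of $w, x$ equals $u$ or $v$, and then focus on the remaining case $\{w,x\} \subset X_{uv}$ by a close analysis of the faces of $Q_{uv}$ whose boundary contains $w$ and $x$.

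For the first step, suppose $w = u$ (the other cases being symmetric). If $x = v$ then the brace $wx$ repeats $uv$, while if $x \in X_{uv}$ then $ux = wx$ would already be an edge of $P$, contradicting the assumption that $wx$ is a brace (and that $G$ is simple). So one may assume $\{w,x\} \subset X_{uv}$, which forces $r_{uv} \geq 2$; together with the hypothesis $r_{uv} \leq 3$ this leaves only the cases $r_{uv} = 2$ and $r_{uv} = 3$.

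The key observation I would exploit is this: a face $R_i$ of $Q_{uv}$ whose four boundary vertices $u, y_i, v, y_{i+1}$ all lie in $V(B)$ admits no diagonal in $P$, since the only possible diagonals $uv$ and $y_i y_{i+1}$ are both braces. Being a quadrilateral region inside a triangulation, such an $R_i$ must therefore contain at least one interior vertex $z$ of $P$. This $z$ lies neither in $V(B)$ (whose four elements are on the boundary) nor in $X_{uv}$ (whose elements are also on boundaries of the $R_j$), so by Lemma~\ref{lem_vertices}, $z \in X_{wx}$. Since $u, v \in X_{wx}$ already (as $w, x \in X_{uv}$), each such interior vertex enlarges $X_{wx}$ beyond $\{u, v\}$.

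In the case $r_{uv} = 2$, both faces $R_1, R_2$ have boundary $u, w, v, x$ and each contributes at least one distinct interior vertex to $X_{wx}$, giving $r_{wx} \geq 4$ and contradicting $r_{wx} \leq 3$. In the case $r_{uv} = 3$, cyclic symmetry lets me take $w = y_1$ and $x = y_2$, so only $R_1$ triggers the observation and yields an interior $z \in X_{wx}$; combined with $u, v \in X_{wx}$ and $r_{wx} \leq 3$ this forces $X_{wx} = \{u, v, z\}$. Consequently $R_2$ and $R_3$ can have no interior vertices of $P$ (any such would lie in $X_{wx} \setminus \{u,v,z\} = \emptyset$), so each must be split by its remaining possible diagonal, namely $y_2 y_3$ and $y_1 y_3$ respectively; both are then edges of $P$, which places $y_3 \in X_{wx}$. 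But $y_3 \neq u, v, z$ since $y_3$ is not interior to $R_1$, the desired contradiction. The main obstacle is threading together the interior-vertex count and the forced diagonals in this last case.
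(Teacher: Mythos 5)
Your proof is correct, and it reaches the contradiction by a somewhat different route than the paper. Both arguments open identically: since $wx$ is not an edge of $P$ and $wx\neq uv$, one quickly reduces to $\{w,x\}\subset X_{uv}$, and the key shared observation is that any quadrilateral region of $Q_{uv}$ whose two possible diagonals are both braces must contain an interior vertex of $P$, which by Lemma~\ref{lem_vertices} lands in $X_{wx}$. From this point the paper gives a single case-free argument using only $R_1$ (with $\{w,x\}=\{y_1,y_2\}$, cyclic symmetry covering both $r_{uv}=2$ and $r_{uv}=3$): the bound $r_{wx}\leq 3$ pins down a unique interior vertex $z$ of $R_1$, and since $z$ must be joined to all four boundary vertices to triangulate $R_1$ without the forbidden diagonals, $z\in X_{uv}$, contradicting that $z$ is interior to a face of $Q_{uv}$. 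You instead continue counting $X_{wx}$: when $r_{uv}=2$ the two faces $R_1,R_2$ each contribute a distinct interior vertex, giving $r_{wx}\geq 4$; when $r_{uv}=3$ you use the bound to get $X_{wx}=\{u,v,z\}$, conclude that $R_2,R_3$ have no interior vertices, force the diagonals $y_2y_3$ and $y_1y_3$, and so find $y_3\in X_{wx}$, again overshooting $r_{wx}\leq 3$. Your route is a bit longer because of the case split on $r_{uv}$ and the extra work with $R_2,R_3$, but it avoids the step of showing $z$ is joined to all four corners and is arguably more transparent as a pure counting argument on $X_{wx}$. One small presentational point: in the $r_{uv}=3$ case you should say explicitly that $R_1$ has \emph{exactly} one interior vertex (any second one would already force $r_{wx}\geq 4$), which is what justifies writing $X_{wx}=\{u,v,z\}$.
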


\begin{proof}
    Suppose that \( w,x \in \{u,v\}\cup X_{uv} \). Since 
    \( wx \) is not an edge of \( P \) and \( wx \neq uv \), it follows 
    that \( \{w,x\} \subset X_{uv} \).
    Since \( r_{uv} \leq 3 \), we may assume, 
    without loss of generality, that \( \{w,x\} = \{y_1,y_2\} \).
    Now since \( P \) is a triangulation and since 
    neither \( uv \) nor \( wx \) can be edges of \( P \) it follows 
    that there are some vertices in \( \mathring R_1 \) 
    (\( \mathring R_1 \) denotes the interior of \( R_1 \)). By Lemma
    \ref{lem_vertices}, all such vertices must be in \( X_{wx} \).
    However, in this situation \( u,v \in X_{wx} \) and since 
    \( r_{wx} \leq 3 \), it follows that there is exactly one vertex, 
    \( z \),
    in \( \mathring R_1 \). Since \( P \) is a triangulation
    not containing \( uv, wx \) it must be that 
    \( zx,zw,zu,zv \) are all edges of \( P \).
    Thus
    \( z \in X_{uv} \)
    which contradicts the fact that \( z \) is in \( \mathring R_1 \).
\end{proof}

\begin{lem}
    \label{lem_max_at_least3}
    If \( G \) is an irreducible doubly braced triangulation with braces
    \( uv \) and \( wx \) then
    \( \max\{r_{uv},r_{wx}\} \geq 3 \).
\end{lem}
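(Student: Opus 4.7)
The plan is a proof by contradiction: suppose $r_{uv},r_{wx}\leq 2$. I split according to whether the two braces share a vertex or are disjoint.

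For the shared-vertex case, say $u=w$, Lemma \ref{lem_vertices} restricts $V(P)\setminus\{v\}$ to $\{u,x\}\cup X_{uv}\cup X_{ux}$, and any common neighbour of $v$ and $x$ is automatically a common neighbour of $u$ and $v$, so $N_P(v)\subseteq\{x\}\cup X_{uv}$ and $\deg_P(v)\leq 1+r_{uv}\leq 3$. Minimum-degree-three in a sphere triangulation forces $\deg_P(v)=3$ and $r_{uv}=2$, and the link of $v$---a triangle on $\{x,y_1,y_2\}$---implies $\{y_1,y_2\}\subseteq X_{ux}$. The symmetric argument at $x$ yields $X_{ux}=X_{uv}$ and $V(P)=\{u,v,x,y_1,y_2\}$. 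But a five-vertex sphere triangulation has only one non-edge, contradicting the presence of two distinct braces.

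For the disjoint case, Lemma \ref{lem_interior} (applicable because $r_{uv},r_{wx}\leq 3$) gives that at most one of $u,v$ lies in $X_{wx}$ and at most one of $w,x$ lies in $X_{uv}$. Relabelling so $v\notin X_{wx}$, the only $3$-cycles in $G$ through $v$ that contain a brace are the triangles $\{u,v,y\}$ with $y\in X_{uv}$; therefore $v$ has exactly $r_{uv}$ ``bad'' edges. Since every contractible-in-$P$ edge at $v$ must be bad (by irreducibility) and there are at least two such edges (Lemma \ref{lem_good_edge_not_in_face}), we conclude $r_{uv}=2$, and symmetrically $r_{wx}=2$. Writing $X_{uv}=\{y_1,y_2\}$ and $X_{wx}=\{z_1,z_2\}$, Lemma \ref{lem_vertices} gives $|V(P)|\leq 8$.

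The main obstacle is the last step, where one must rule out the remaining configurations; my focus would be the generic subcase $V(Q_{uv})\cap V(Q_{wx})=\emptyset$, so $|V(P)|=8$. Here $Q_{uv}$ separates the sphere into two disks, and the connected subgraph $Q_{wx}$, being vertex-disjoint from $Q_{uv}$, lies entirely in one of them; the other disk is vertex-free and must be triangulated by the diagonal $y_1y_2$, since the alternative $uv$ is a brace. Symmetrically $z_1z_2\in E(P)$. For each $p\in V(Q_{uv})\cup V(Q_{wx})$ let $a_p$ be its number of neighbours in the opposite $4$-cycle. A direct calculation, using that in this subcase none of $u,v,y_1,y_2$ are common neighbours of $w$ and $x$ (and symmetrically), gives $|N_P(p)\cap N_P(q)|=a_p+a_q-2$ for every annular edge $pq$. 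Since such edges are not bad, irreducibility forces $a_p+a_q\geq 5$. The same constraints (each top vertex has at most one neighbour among $\{w,x\}$, and symmetrically) yield $a_p\leq 3$. The total number of annular edges is $8$, so $a_u+a_v+a_{y_1}+a_{y_2}=8$; each summand is at least $2$ because if $a_p=1$ for some $p$ the unique annular edge at $p$ would force its partner to have $a_q\geq 4$, contradicting $a_q\leq 3$. Hence every $a_p=2$ and $a_p+a_q=4<5$, a contradiction. The residual subcases in which $V(Q_{uv})$ and $V(Q_{wx})$ share a vertex or $X_{uv}\cap X_{wx}\neq\emptyset$ are handled by an adaptation of the same annular counting after identifying the shared vertices; this case analysis is the main technical work of the proof.
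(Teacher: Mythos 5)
Your proposal takes a genuinely different route from the paper. The paper makes no shared-/disjoint-brace split; it instead fixes $R_1, R_2$, locates the vertex $z$ splitting $R_1$ into $R_1^{\pm}$, argues $w=z$, and eliminates the remaining positions for $x$ by explicit face analysis. Your argument is more combinatorial: you use degree counts and Lemma~\ref{lem_good_edge_not_in_face} to force $r_{uv}=r_{wx}=2$, and then an annular edge-count to finish. The shared-vertex case is correct (your phrase ``common neighbour of $v$ and $x$'' should read ``element of $X_{ux}$ that is a neighbour of $v$'', but the conclusion $N_P(v)\subseteq\{x\}\cup X_{uv}$ and hence $|V(P)|=5$ is right), and the reduction to $r_{uv}=r_{wx}=2$ in the disjoint case is clean and correct, using Lemma~\ref{lem_interior} exactly where needed.

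The generic-subcase annular argument is also essentially sound, but the identity $r_{pq}=a_p+a_q-2$ is asserted without proof and is not automatic: it relies on the fact that for an inner vertex $q$, $q\notin X_{uv}=\{y_1,y_2\}$, so the consecutive arc of $q$'s outer neighbours can contain at most one of the opposite pair $\{u,v\}$, and therefore every outer neighbour of $q$ other than $p$ is in fact adjacent to $p$ (and symmetrically on the inner side). Without spelling out this point (and the consecutivity of annular-neighbour arcs in a triangulated annulus) the displayed formula is not justified. The same observation is what gives $a_p\le 3$, so the two uses of ``the same constraints'' should be made explicit rather than parenthetical.

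The real gap is the last sentence. You treat only the subcase $V(Q_{uv})\cap V(Q_{wx})=\emptyset$, $|V(P)|=8$, and you acknowledge that the cases where the two $4$-cycles overlap (one of $w,x\in X_{uv}$, one of $u,v\in X_{wx}$, or $X_{uv}\cap X_{wx}\neq\emptyset$, giving $|V(P)|\le 7$) are ``handled by an adaptation.'' This is not a routine adaptation: once the two $Q$'s share vertices, there is no vertex-free quadrilateral on both sides, the annulus degenerates, and the formula $r_{pq}=a_p+a_q-2$ and the arc structure both need to be re-derived or replaced case by case. These residual subcases are precisely where the paper's proof does its real work (the analysis of $R_1^{\pm}$, the dichotomy $w=z$ versus $w\neq z$, and the placement of $x$), so the omission is a substantive gap rather than a cosmetic one. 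As written, the proposal proves the lemma only under the extra hypothesis that $Q_{uv}$ and $Q_{wx}$ are vertex-disjoint.
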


\begin{figure}[ht]
    \centering

    \definecolor{ududff}{rgb}{0,0,0}
\begin{tikzpicture}[scale=0.7,line cap=round,line join=round,>=triangle 45,x=1cm,y=1cm]
\draw [line width=2pt] (-7.58,4.33)-- (-9.92,0.67);
\draw [line width=2pt] (-7.58,4.33)-- (-5.98,0.67);
\draw [line width=2pt] (-9.92,0.67)-- (-8.08,-2.51);
\draw [line width=2pt] (-8.08,-2.51)-- (-5.98,0.67);
\draw [line width=2pt] (-9.92,0.67)-- (-7.9,0.25);
\draw [line width=2pt] (-7.9,0.25)-- (-5.98,0.67);
\draw [shift={(-7.95,2.6056310679611654)},line width=2pt]  plot[domain=-0.7765985496250778:3.918191203214871,variable=\t]({1*2.7618051399866146*cos(\t r)+0*2.7618051399866146*sin(\t r)},{0*2.7618051399866146*cos(\t r)+1*2.7618051399866146*sin(\t r)});
\begin{scriptsize}
\draw [fill=ududff] (-7.58,4.33) circle (2.5pt);
\draw[color=ududff] (-6.42,4.66) node {$u$};
\draw [fill=ududff] (-9.92,0.67) circle (2.5pt);
\draw[color=ududff] (-10.3,0.74) node {$y_1$};
\draw [fill=ududff] (-5.98,0.67) circle (2.5pt);
\draw[color=ududff] (-5.62,0.64) node {$y_2$};
\draw [fill=ududff] (-8.08,-2.51) circle (2.5pt);
\draw[color=ududff] (-7.92,-2.76) node {$v$};
\draw [fill=ududff] (-7.9,0.25) circle (2.5pt);
\draw[color=ududff] (-7.79,0.58) node {$z$};
\draw (-7.79,2.28) node {$R_1^+$};
\draw (-7.99,-1.08) node {$R_1^-$};
\end{scriptsize}
\end{tikzpicture}
    \caption{A sketch for the proof of Lemma \ref{lem_max_at_least3}.}
    \label{fig_at_least_three}
\end{figure}
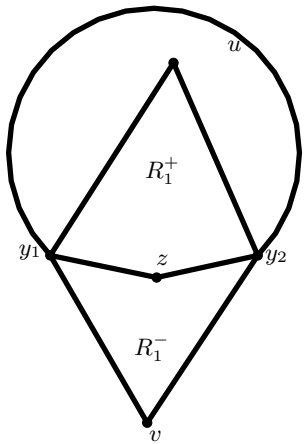

\begin{proof}
    For a contradiction 
    assume that \( r_{wx} \leq r_{uv} \leq 2 \). 
    
    Suppose that \( r_{wx} + r_{uv}  \leq 2 \). By Lemma \ref{lem_vertices}
    we see that \(|V(G)| \leq 6  \). However it is easy to see that there are only
    three irreducible doubly braced triangulations with at most six vertices
    (these are shown in Figures \ref{fig_doubly_braced_oct},  
    \ref{fig_hex_disjoint} and \ref{fig_hex_adjacent}) and none of these 
    satisfy \( r_{wx} +r_{uv} \leq 2 \).

    Now suppose that \( r_{wx} \leq 2 \) and \( r_{uv} = 2 \). By Lemma
    \ref{lem_interior} we may
    suppose that \( w \in \mathring R_1 \). If \( x \not \in R_1 \) then,
		by planarity we have $X_{wx}\subseteq\{u,v,y_1,y_2\}$.
		Thus, by Lemma \ref{lem_vertices}, we have \(|V(G)| \leq 6  \).
		As in the previous paragraph, this is not possible since 
		in each of the Figures \ref{fig_doubly_braced_oct},  
    \ref{fig_hex_disjoint} and \ref{fig_hex_adjacent} we have
		\( \max\{r_{uv},r_{wx}\} \geq 3 \).
		So we can assume $x\in R_1$. 
		Now since $w\in R_1^\circ$, it follows that $X_{wx}\subseteq R_1$.
        
    By Lemma \ref{lem_vertices}, there are no vertices in \( \mathring R_2 \). It follows 
    that \( y_1y_2 \) is an edge of \( P \) that is contained in \( R_2 \).
    If this edge is part of a braced triangle then wlog \( y_1 \in 
    \{w,x\}\) and \( y_2 \in X_{wx} \). Thus, in this case, 
    \( |V(G)| = |V(B) \cup X_{uv} \cup X_{wx}| \leq 6 \) and  we know that 
    no six vertex irreducible satisfies 
    \( \max\{r_{uv},r_{wx}\} \leq 2 \).
    
    So \( y_1y_2 \) is not part of any braced triangle. It must 
    therefore be part of a non-facial triangle. Thus there is a vertex 
    \( z \in \mathring R_1 \) and edges \( y_1z \) and \( zy_2 \). See 
    Figure \ref{fig_at_least_three} for an illustration. Note that 
    \( R_1 \) splits into two closed regions
    \( R_1^+ \) and \( R_1^- \) whose intersection is the path
    \( y_1,z,y_2  \) as shown in Figure \ref{fig_at_least_three}.
    
    Now, we claim that in fact \( w = z \). If not, then since \( w \in 
    \mathring R_1\) it follows that \( w \) is contained 
    in the interior of either \( R_1^+ \) or \( R_1^- \)
    (see Figure \ref{fig_at_least_three}), wlog say \( R_1^+ \). 
		Suppose $x$ lies in the interior of $R_1^-$. In this case, by Lemma \ref{lem_vertices}, the interior of $R_1^-$ contains no other vertices. There are now exactly two ways to	triangulate the region $R_1^-$.
		For each of these triangulations note that $xv$ is an edge of $P$ which is contractible in $G$, a contradiction. 
		Thus we may assume that $x$ does not lie in the interior of $R_1^-$.
    It follows that \( zv \in E(P) \). Note that
    the edge $zv$ cannot lie in a nonfacial triangle of \( P \).
		Also, since $z\notin X_{uv}$, the edge $zv$ cannot lie in a $3$-cycle containing the brace $uv$. Thus $zv$ must lie in a $3$-cycle containing the brace $wx$.
		It follows that \( x =v \) and that \( wz \in E(P) \). Since there
    can be no other vertices in \( R_1^+ \) it follows that 
    \( wy_1 \) and \( wy_2 \) are edges in \( P \), whence 
    \( r_{wx} \geq 3 \). Thus our assumption that \( w \neq z \)
    leads to a contradiction. 

    On the other hand, if \( x \in \mathring R_1 \), then we may assume without loss of generality that $x$ lies in the interior of $R_1^-$.
		In this case, there are no vertices in the interior of $R_1^+$. Moreover, the edge $uw$ lies in $P$ and is contractible in $G$, a contradiction. 
		Thus we may assume that  
    \( x \in \{ u,v\} \): wlog \( x = u \). 
    Now it is clear that \( X_{wx}=X_{uv} =\{y_1,y_2\} \). Thus, by Lemma \ref{lem_vertices} we have $|V(G)| = 5$, a contradiction. 
\end{proof}

\begin{lem}
    \label{lem_not_in_face_of_Q}
    Suppose that 
    \( r_{wx} \leq r_{uv}=3 \) and that 
    there is no face of \( Q_{uv} \) that contains both 
    \( w \) and \( x \). Then \( G \) is a doubly braced 
    capped hexahedron with disjoint braces (see Figure 
    \ref{fig_hex_disjoint}).
\end{lem}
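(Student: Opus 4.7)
My plan is to use Lemma~\ref{lem_interior} to restrict where $w$ and $x$ can sit relative to $Q_{uv}$, and then to exploit the triangulation constraints on the three quadrilateral faces $R_1,R_2,R_3$. First I will observe that neither $w$ nor $x$ can equal $u$ or $v$, since those vertices lie on the boundary of every face of $Q_{uv}$ and so the hypothesis would be violated. Combined with Lemma~\ref{lem_interior}, this leaves two cases: (A) both $w$ and $x$ lie in the interiors of distinct faces of $Q_{uv}$, WLOG $w\in\mathring R_1$ and $x\in\mathring R_2$; or (B) exactly one of $w,x$, say $w$, equals some $y_i$, WLOG $w=y_1$, and then $x$ must lie in $\mathring R_2$, the interior of the unique face whose boundary misses $y_1$.

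In Case~A, I will apply Lemma~\ref{lem_vertices} together with the planar confinement that neighbours of $w$ lie in the closure of $R_1$ and neighbours of $x$ lie in the closure of $R_2$. This forces $X_{wx}\subseteq R_1\cap R_2=\{u,y_2,v\}$, so in particular $\mathring R_3$ contains no vertex of $P$. Because $w\notin X_{uv}$ in this case, it cannot be adjacent to both $u$ and $v$, so the triangulation of $R_1$ must use the diagonal $y_1y_2$ and join $w$ to $y_1,y_2$ and exactly one of $\{u,v\}$. The key observation is then that $wy_1$ is contractible in $G$: it lies in exactly two triangles of $P$, and the only candidate brace-containing $3$-cycle through it would be $wy_1x$ using the brace $wx$, but $y_1x$ is neither a brace nor an edge of $P$ (since every $P$-neighbour of $x$ lies on the boundary of $R_2$, which does not contain $y_1$). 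This contradicts irreducibility and eliminates Case~A.

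In Case~B, the same planar analysis gives $V(P)=\{u,v,y_1,y_2,y_3,x\}$, so $|V(P)|=6$. The faces $R_1$ and $R_3$ contain no interior vertex and so must be triangulated by the diagonals $y_1y_2$ and $y_1y_3$ respectively, the alternative diagonal $uv$ being a brace. In $R_2$, the hypothesis $r_{uv}=3$ forbids $x$ from being adjacent to both $u$ and $v$, so $R_2$ uses the diagonal $y_2y_3$ with $x$ joined to $y_2,y_3$ and exactly one of $\{u,v\}$. The two resulting configurations are interchanged by $u\leftrightarrow v$ and so give isomorphic braced triangulations, and direct inspection (comparing edges and verifying irreducibility) shows that this graph coincides with the capped hexahedron with disjoint braces of Figure~\ref{fig_hex_disjoint}.

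The main obstacle is the planar bookkeeping that pins down $V(P)$ and the admissible triangulations of each $R_i$; once this is in hand, Case~A is killed by the explicit contractible edge $wy_1$, and Case~B is forced into the desired configuration by the vertex count together with the constraint $r_{uv}=3$.
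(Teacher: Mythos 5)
Your proof is correct, and it rests on the same two technical pillars as the paper's: Lemma~\ref{lem_interior} together with Lemma~\ref{lem_vertices} to localise the braced vertices, and the observation that a vertex sitting alone in the interior of a face of $Q_{uv}$ has degree exactly $3$ in $P$ (adjacent to $y_i,y_{i+1}$ and one pole), so all its incident edges are contractible in $P$ and, by irreducibility, must lie in $3$-cycles through the brace $wx$. The organisational difference is that the paper applies this directly to $w\in\mathring R_1$ without a case split: the three braced $3$-cycles force $x$ to be a common $P$-neighbour of $w$'s three neighbours lying outside $R_1$, and planarity with $r_{uv}=3$ then pins $x=y_3$ and $|V(P)|=6$ in one stroke. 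You instead dichotomise on whether $x\notin X_{uv}$ (your Case~A) or $x\in X_{uv}$ (your Case~B, after the WLOG swap $w\leftrightarrow x$), kill Case~A by exhibiting the concrete contractible edge $wy_1$ (correctly noting $y_1x\notin E(G)$ because $N_P(x)\subseteq R_2\not\ni y_1$), and in Case~B reconstruct the capped hexahedron by hand from the vertex count. Both routes are sound; the paper's avoids the case split and is marginally shorter, while yours makes the contractible edge explicit and more visibly parallels the structure of the proof of Lemma~\ref{lem_max_at_least3}.
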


\begin{proof}
    By Lemma \ref{lem_interior}, we 
    can assume that \( w \in \mathring{R}_1 \).
    So 
    \( x \not \in R_1 \) and since, by Lemma
    \ref{lem_vertices}, \( V(G) = \{u,v,w,x\}\cup X_{uv}\cup X_{wx} \)
    we see that \( w \) is the only vertex in \( \mathring R_1 \). 
    Thus \( N_P(w) \subset \{y_1,u,y_2,v\} \). Also \( |N_P(w)| \geq 3 \)
    since the min degree of any triangulation is at least three.
    Furthermore
    \( w \not\in X_{uv} \), so it follows that \( w \)
    is adjacent to both of \( y_1,y_2 \) and exactly one of \( u,v \), say 
    \( u \) wlog. 
    In a triangulation an edge incident to a vertex of degree 3
    cannot belong to a nonfacial triangle. Thus
    none of \( wy_1,  wy_2, wu \) are 
    in nonfacial triangles. 
		Also, since \( wv \) is not an edge of 
    \( G \), the edge \( wu \) is not in a triangle
    that contains the brace \( uv \). It follows that 
    the  edges \( wy_1,wy_2, wu\) must all belong to triangles 
    containing the brace \( wx \). Thus \( x \) is a common neighbour in 
    \( P \) of \( y_1,y_2 \) and \( u \) that is not in \( R_1 \).
		Since $r_{uv}=3$ we must have by planarity that 
		$X_{uv}=\{x,y_1,y_2\}$. 
    It now follows easily that \( G \) is the doubly braced 
    capped hexahedron with disjoint braces as claimed.
\end{proof}

\begin{lem}
    \label{lem_three_cycle}
    Suppose that \( r_{wx} \leq r_{uv} = 3 \). Then \( X_{uv} \) spans 
    a 3-cycle in \( P \).
\end{lem}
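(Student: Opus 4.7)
The plan is to locate a face $R_i$ of $Q_{uv}$ that contains no vertex of $V(B)\setminus\{u,v\}$ and then replay the argument from the proof of Lemma \ref{lem_face_with_no_braced_vertices} on $R_i$. The key observation is that for $r_{uv}=3$ the two faces adjacent to $R_i$ meet in $\{u,v,y_{i+2}\}$ rather than just $\{u,v\}$, so the place where Lemma \ref{lem_face_with_no_braced_vertices} derived a contradiction will here instead produce the third vertex $y_{i+2}$ needed to close up the triangle.

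First I would invoke Lemma \ref{lem_not_in_face_of_Q}: if no face of $Q_{uv}$ contains both $w$ and $x$, then $G$ is the capped hexahedron with disjoint braces in Figure \ref{fig_hex_disjoint}, and direct inspection confirms that $X_{uv}$ is a 3-cycle for each of the two braces. So I may assume some face, say $R_1$, contains both $w$ and $x$. By Lemma \ref{lem_interior}, at most one of $w,x$ lies in $\{u,v\}\cup X_{uv}$, so at least one of them sits in $\mathring R_1$ and hence in no other face. A short case analysis on the status of the remaining endpoint (lying in $\{u,v\}$, equal to $y_1$, equal to $y_2$, or lying in $\mathring R_1$) shows that in each instance at least one face in $\{R_2,R_3\}$ contains no vertex of $V(B)\setminus\{u,v\}$.

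Fix such a face $R_i$. The first half of the proof of Lemma \ref{lem_face_with_no_braced_vertices} does not use $r_{uv}\geq 4$, so it still yields that $R_i$ contains no interior vertex of $P$. Since $uv\notin E(P)$, the diagonal $y_iy_{i+1}$ must be an edge of $P$. Now $y_i,y_{i+1}\notin V(B)$, so $y_iy_{i+1}$ lies in no braced triangle; irreducibility therefore forces it into a non-facial 3-cycle $y_iy_{i+1}c$ of $P$ with $c$ outside $R_i$. Adjacency of $c$ to both $y_i$ and $y_{i+1}$ places $c\in R_{i-1}\cap R_{i+1}=\{u,v,y_{i+2}\}$, and since $y_iy_{i+1}u$ and $y_iy_{i+1}v$ are already the facial triangles inside $R_i$, we must have $c=y_{i+2}$. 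Thus $y_{i+2}y_i$ and $y_{i+2}y_{i+1}$ are edges of $P$, and so $X_{uv}$ spans a 3-cycle.

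The main obstacle is the case analysis in the second paragraph. Once the capped hexahedron case has been peeled off by Lemma \ref{lem_not_in_face_of_Q}, the remaining work is essentially bookkeeping about which faces a given vertex of $V(B)$ can lie in, which is governed entirely by Lemma \ref{lem_interior}.
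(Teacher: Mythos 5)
Your proof is correct and follows essentially the same route as the paper's, with two minor variations. Where you reuse the first-half argument of Lemma~\ref{lem_face_with_no_braced_vertices} to show the chosen face $R_i$ has no interior vertices, the paper instead appeals to Lemma~\ref{lem_vertices}: any vertex not in $\{u,v,w,x\}\cup X_{uv}$ must lie in $X_{wx}$, and $X_{wx}\subset R_1$ once $w\in\mathring R_1$. Both give the same conclusion. Where you perform an explicit case analysis on the location of $x$ to pick a single "clean" face $R_i\in\{R_2,R_3\}$ and then extract the third vertex from the non-facial $3$-cycle on the diagonal $y_iy_{i+1}$, the paper instead observes that $w\in\mathring R_1$ already implies both $\mathring R_2$ and $\mathring R_3$ are empty, so $y_2y_3$ and $y_3y_1$ are edges, whence $N_P(y_3)=\{u,v,y_1,y_2\}$ and the third edge $y_1y_2$ is forced. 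Your observation that when $r_{uv}=3$ the two neighbouring faces meet in $\{u,v,y_{i+2}\}$ rather than just $\{u,v\}$ — so the step that previously gave a contradiction now produces the missing vertex — is exactly the right way to see why the proof closes. The paper's version is a little more economical (no case split on $x$, one appeal to Lemma~\ref{lem_vertices}), but the underlying mechanism is the same.
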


\begin{proof}
    If no face of \( Q_{uv} \) contains both \( w,x \) then by Lemma
    \ref{lem_not_in_face_of_Q}, \( G \) is isomorphic to the capped hexahedron  
    shown in Figure \ref{fig_hex_disjoint} and the conclusion is true.
    Using this and Lemma \ref{lem_interior} we may assume that 
    \( w \in \mathring R_1 \) and \( x
    \in R_1\). Now it is clear that there are no vertices in 
    \( \mathring R_2 \) or in \( \mathring R_3 \) since 
    such vertices would have to be in \( X_{wx} \), by Lemma 
    \ref{lem_vertices} and this contradicts \( w \in \mathring R_1 \). 
    Now, since \( uv \) is not an edge of \( P \), it follows that 
    \( y_2y_3 \) and \( y_3y_1 \) are both edges of \( P \). Since 
    \( y_3 \not\in X_{wx} \) we also conclude that both of these 
    edges must lie in nonfacial triangles of \( P \). Furthermore, 
    it follows that 
    \( N_{P}(y_3) = \{u,v,y_1,y_2\}\). Therefore \( y_1y_2 \) 
    must also be an edge of \( P \) as required.
\end{proof}

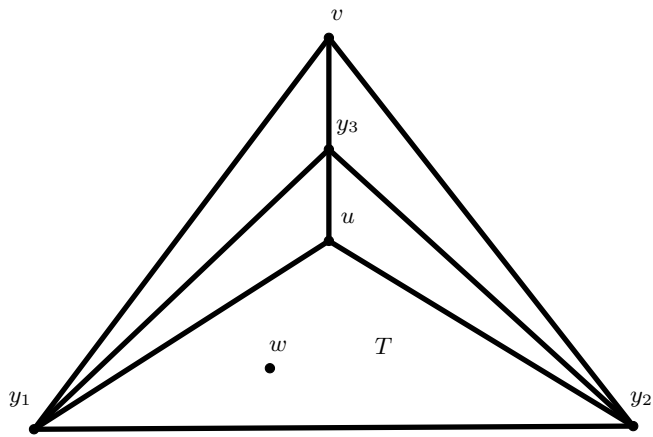
\begin{figure}[ht]
    \centering
    \definecolor{ududff}{rgb}{0,0,0}
\begin{tikzpicture}[scale=0.7,line cap=round,line join=round,>=triangle 45,x=1cm,y=1cm]
\draw [line width=2pt] (-8.39,5.72)-- (-13.99,-1.72);
\draw [line width=2pt] (-13.99,-1.72)-- (-2.61,-1.66);
\draw [line width=2pt] (-2.61,-1.66)-- (-8.39,5.72);
\draw [line width=2pt] (-8.39,1.86)-- (-13.99,-1.72);
\draw [line width=2pt] (-8.39,1.86)-- (-2.61,-1.66);
\draw [line width=2pt] (-8.39,3.6)-- (-13.99,-1.72);
\draw [line width=2pt] (-8.39,3.6)-- (-2.61,-1.66);
\draw [line width=2pt] (-8.39,3.6)-- (-8.39,1.86);
\draw [line width=2pt] (-8.39,3.6)-- (-8.39,5.72);
\begin{scriptsize}
\draw [fill=ududff] (-8.39,5.72) circle (2.5pt);
\draw[color=ududff] (-8.23,6.15) node {$v$};
\draw [fill=ududff] (-13.99,-1.72) circle (2.5pt);
\draw[color=ududff] (-14.25,-1.15) node {$y_1$};
\draw [fill=ududff] (-2.61,-1.66) circle (2.5pt);
\draw[color=ududff] (-2.45,-1.15) node {$y_2$};
\draw [fill=ududff] (-8.39,1.86) circle (2.5pt);
\draw[color=ududff] (-8.03,2.29) node {$u$};
\draw [fill=ududff] (-8.39,3.6) circle (2.5pt);
\draw[color=ududff] (-8.03,4.03) node {$y_3$};
\draw [fill=ududff] (-9.51,-0.56) circle (2.5pt);
\draw[color=ududff] (-9.35,-0.13) node {$w$};
\draw[color=ududff] (-7.35,-0.13) node {$T$};
\end{scriptsize}
\end{tikzpicture}
\caption{The case \( r_{wx} \leq r_{uv} = 3 \). Here \( R_1 \) is split into two triangular regions by the edge \( y_1y_2 \). One is the unbounded region (in this plane embedding) and one is the region containing \( w \).}
    \label{fig_max3}
\end{figure}

\begin{proof}[Proof of Theorem \ref{thm_irred_seven}]
    If \( \max\{r_{uv},r_{wx}\} \geq 4 \) then, by Lemma \ref{lem_octa}, \( G \) is isomorphic to
    the example of Figure \ref{fig_doubly_braced_oct}. 
    So we assume that \( r_{wx} \leq r_{uv} \leq 3 \). 
    By Lemma \ref{lem_max_at_least3} we have \( r_{uv} = 3 \).

Now using Lemmas \ref{lem_interior}
and \ref{lem_three_cycle} we have the situation illustrated in Figure
\ref{fig_max3}. Note that any vertices that are not in \( X_{uv}\cup \{u,v,w,x\} \) 
must  lie in the interior of the triangular region labelled \( T \) in 
Figure \ref{fig_max3}, since any such vertex must be in \( X_{wx} \).

Now suppose that \( x \) does not lie in the closed region \( T \).
If $x\notin R_1$ then, by Lemma \ref{lem_not_in_face_of_Q}, $G$ is isomorphic to the doubly braced capped hexahedron in Figure \ref{fig_hex_disjoint}.
If $x\in R_1$ then, using the observation in the 
paragraph above, we see that there are no other vertices in the interior of the region $R_1\backslash T$.
If $x\not=v$ then the triangulation $P$ contains the edges $xv$, $xy_1$ and $xy_2$. 
Now note that the edge $xv$ is contractible in $G$, a contradiction. Thus, we conclude that $x=v$ and so \( V(G) = \{u,v,w\} \cup X_{uv} \). In other words \( G \) has six vertices and adjacent braces and so it is isomorphic to the example shown in Figure \ref{fig_hex_adjacent}.

Finally suppose that \( x \) also lies in the closed triangular region \( T \). 
We can construct a unibraced triangulation \( H \) by deleting the vertices 
\( v,y_3 \) and all their incident edges. Now \( H \) is a unibraced triangulation 
with a triangular face bounded by edges \( uy_1,y_1y_2,y_2u \). 
If \( H \) has six or more vertices, then by Lemma \ref{lem_cont_away_from_triangle}
there is some edge of \( H \) that is contractible that is not one 
of \( uy_1,y_1y_2,y_2u \). Such an edge would also be contractible in \( G \), 
contradicting our assumption that \( G \) is irreducible. Therefore 
\( H \) has only five vertices and it follows easily that \( G \) is isomorphic
to one of the examples shown in Figures \ref{fig_ireed_seven} or 
\ref{fig_ireed_seven_non}. This completes the proof of Theorem \ref{thm_irred_seven}.
\end{proof}

If \( G' = (P,B) \) is a braced triangulation and 
 \( e \) is an edge of \( P \) that is contractible in \( G' \), then
 \( G = (P/e,B) \) 
is a braced triangulation and we say that \( G' \) is obtained 
from \( G \) by a {\em topological vertex splitting move}.
 
Combining the above results we obtain the following theorem: 

\begin{thm}
    \label{thm_splitting_version}
    Let \( G \) be a doubly braced triangulation. Then \( G \) 
    can be constructed from one of the examples in
    Figures \ref{fig_doubly_braced_oct}, \ref{fig_hex_disjoint},
    \ref{fig_hex_adjacent}, \ref{fig_ireed_seven} or 
    \ref{fig_ireed_seven_non} by a sequence of 
    topological vertex splitting moves.
\end{thm}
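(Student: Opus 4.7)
The plan is a straightforward induction on \( |V(G)| \), using the classification of irreducibles in Theorem \ref{thm_irred_seven} as the base case and the definition of the topological vertex splitting move to drive the inductive step.

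First, observe that any doubly braced triangulation \( G = (P,B) \) has \( |V(P)| \geq 6 \), and that the five graphs listed in Figures \ref{fig_doubly_braced_oct}, \ref{fig_hex_disjoint}, \ref{fig_hex_adjacent}, \ref{fig_ireed_seven}, \ref{fig_ireed_seven_non} are exactly the irreducible doubly braced triangulations by Theorem \ref{thm_irred_seven}. These serve as the base cases: if \( G \) is itself irreducible, then \( G \) is isomorphic to one of them and the empty sequence of splitting moves suffices.

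For the inductive step, suppose \( G = (P,B) \) is a doubly braced triangulation that is not irreducible. Then by definition there exists an edge \( e \in E(P) \) that is contractible in \( G \), so that \( G/e = (P/e, B) \) is again a doubly braced triangulation, and \( |V(P/e)| = |V(P)| - 1 \). By the inductive hypothesis applied to \( G/e \), there is a sequence of topological vertex splitting moves producing \( G/e \) from one of the five irreducibles. Since \( G \) is obtained from \( G/e \) by the inverse of contracting \( e \), i.e.\ precisely by one topological vertex splitting move (this is immediate from the definition given just before the statement), appending this single move to the sequence for \( G/e \) yields a sequence producing \( G \) from the same irreducible. This closes the induction.

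There is really no obstacle here beyond verifying that the definitions line up; the content of the theorem is entirely carried by Theorem \ref{thm_irred_seven}, which bounds the irreducibles, and by the fact that the brace set \( B \) is unchanged under edge contraction of a contractible edge (so that \( G/e \) has exactly two braces, and the induction stays within the class of doubly braced triangulations). No new geometric or combinatorial input is needed.
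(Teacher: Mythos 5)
Your proof is correct and is exactly the argument the paper has in mind: the paper states Theorem \ref{thm_splitting_version} as an immediate consequence of Theorem \ref{thm_irred_seven} together with the definitions, and your induction on \( |V(G)| \) simply spells out that reduction in detail.
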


In general, a {\em $d$-dimensional vertex splitting move} on a graph is defined as follows (see \cite{WWsplitting}, for example).
Let $G = (V, E)$ be a graph, and let $v_1\in V$ and $v_1v_i\in E$ for $i=2,\ldots, d$.
If a graph $G'$ is obtained from $G$ by 
 \begin{itemize}
\item adding a new vertex $v_0$  and edges $v_0v_1, v_0v_2, \ldots,  v_0v_{d}$ to $G$, and
\item  for every edge $v_1x\in E$ with $x\notin \{v_2, \ldots,  v_{d}\}$, either leaving the edge unchanged or replacing it with the edge $v_0x$,
\end{itemize}
then $G'$ is said to be obtained from $G$ by a \emph{$d$-dimensional vertex split} at $v_1$
 (on the edges $v_1v_2, \ldots , v_1v_{d}$). 

Of course, topological vertex splitting is a special case of ($3$-dimensional) vertex splitting for graphs, and it is known that vertex splitting for graphs preserves rigidity for generic frameworks in a wide variety of settings \cite{WWsplitting,dkn,CJ,jkn}. Therefore it is natural to look for geometric rigidity applications of Theorem \ref{thm_splitting_version}.
We provide two such applications in Sections \ref{sec:hypercylinder} and \ref{sec:mixednorms}.

\section{Application: rigidity in the hypercylinder}
\label{sec:hypercylinder}

There is a sizable literature on the rigidity of bar-joint frameworks in $3$-dimensional Euclidean space whose points
are constrained to lie on a surface (see, for example, \cite{jkn,jn,nop,nop1}.
 In this section, we will consider the
rigidity of bar-joint frameworks in $4$-dimensional Euclidean space where the 
points are constrained to lie on a hypercylinder.

\subsection{The hypercylinder in \( \mathbb R^4 \)}

Let \( \Sigma = \{(x,y,z,w) \in \mathbb R^4: x^2+y^2+z^2 = 1\}\) be
the hypercylinder in \( \mathbb R^4 \). Observe that 
\( \Sigma \) is a smooth three-dimensional manifold that 
inherits a natural metric  as a subspace of the Euclidean 
space \( \mathbb R^4 \). 
The group of isometries of \( \Sigma \) with respect to this 
metric is a Lie group of real dimension 4. 
Indeed this group is canonically 
isomorphic to \( O(3) \times E(1)  \) where \( O(3) \) is the 
group of \( 3\times 3 \) orthogonal matrices and \( E(1) \) is the 
group of Euclidean isometries of \( \mathbb R \).

Let $\mathcal{T}(\mathbb{R}^4)$ denote the real linear space of infinitesimal rigid motions of the Euclidean space $\mathbb{R}^4$. Recall that each infinitesimal rigid motion $\eta\in\mathcal{T}(\mathbb{R}^4)$ is an affine map $\eta:\mathbb{R}^4\to \mathbb{R}^4$ of the form $\eta(x) = B(x)+c$ where the linear part $B$ is a $4\times4$ skew-symmetric matrix and the translational part $c$ is a vector in $\mathbb{R}^4$.

Let \( \pi:\mathbb R^4 \rightarrow \mathbb R^3 \) be the projection 
\( (x,y,z,w) \mapsto (x,y,z) \) 
and let $\mathcal{T}(\Sigma)$ denote the following subspace of $\mathcal{T}(\mathbb{R}^4)$,
\[\mathcal{T}(\Sigma) = \{\eta\in\mathcal{T}(\mathbb{R}^4): \pi(\eta(x)) \cdot\pi(x)=0,\quad\forall\,x\in \Sigma\}.\]
We refer to the elements of $\mathcal{T}(\Sigma)$ as infinitesimal rigid motions of $\Sigma$.

\begin{lem}
\label{l:rigidmotions}
Let $\eta\in\mathcal{T}(\mathbb{R}^4)$ take the form $\eta(x) = B(x)+c$ where $B$ is a $4\times4$ skew-symmetric matrix and $c\in \mathbb{R}^4$.
Then $\eta\in \mathcal{T}(\Sigma)$ if and only if 
 $$B=\begin{bmatrix} \tilde{B} & 0 \\ 0 & 0 \end{bmatrix},$$
 where $\tilde{B}$ is a $3\times 3$ skew-symmetric matrix, and $\pi(c)=0$.
\end{lem}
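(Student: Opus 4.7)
The plan is a direct calculation: decompose $B$ and $c$ according to the orthogonal splitting $\mathbb{R}^4=\mathbb{R}^3\oplus\mathbb{R}$ induced by $\pi$, substitute into the defining identity $\pi(\eta(x))\cdot\pi(x)=0$, and then use the skew-symmetry of the $3\times 3$ block together with the fact that the fourth coordinate of $x\in\Sigma$ is unconstrained to read off the vanishing conditions.

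First I would parametrise a point $x\in\Sigma$ as $x=(\tilde x, x_4)$ with $\tilde x\in S^2\subset\mathbb{R}^3$ and $x_4\in\mathbb{R}$, and write the data in the block form
\[ B=\begin{bmatrix}\tilde B & u\\ -u^T & 0\end{bmatrix},\qquad c=(\tilde c,c_4), \]
where $\tilde B$ is a $3\times 3$ skew-symmetric matrix, $u,\tilde c\in\mathbb{R}^3$ and $c_4\in\mathbb{R}$. This is the most general form compatible with the skew-symmetry of $B$. A direct computation yields $\pi(\eta(x))=\tilde B\tilde x+x_4\,u+\tilde c$, and since $\tilde B$ is skew-symmetric we have $\tilde B\tilde x\cdot\tilde x=0$, so the defining identity reduces to
\[ x_4\,(u\cdot\tilde x)+(\tilde c\cdot\tilde x)=0 \quad\text{for all } \tilde x\in S^2 \text{ and all } x_4\in\mathbb{R}. \]

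Since this equation is affine in the free parameter $x_4$, its coefficient and constant term must each vanish for every unit vector $\tilde x$, and as unit vectors span $\mathbb{R}^3$ this forces $u=0$ and $\tilde c=0$, i.e.\ $\pi(c)=0$. The converse direction is immediate: with $u=0$ and $\pi(c)=0$ we have $\pi(\eta(x))=\tilde B\tilde x$, so skew-symmetry of $\tilde B$ again gives $\pi(\eta(x))\cdot\pi(x)=0$, placing $\eta$ in $\mathcal T(\Sigma)$. I do not expect a real obstacle here; the only point deserving care is writing down the correct block decomposition of a general $4\times 4$ skew-symmetric matrix aligned with the splitting induced by $\pi$, after which the argument is a one-line verification.
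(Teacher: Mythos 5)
Your proof is correct. It is a genuinely different route from the paper's: you write the most general skew-symmetric $B$ in block form $\begin{bmatrix}\tilde B & u\\ -u^T & 0\end{bmatrix}$ and reduce the defining condition to the identity $x_4(u\cdot\tilde x)+\tilde c\cdot\tilde x=0$ for all $\tilde x\in S^2$ and all $x_4\in\mathbb{R}$, then extract $u=0$ and $\tilde c=0$ from the fact that an affine function of $x_4$ vanishing identically has both coefficients zero and that unit vectors span $\mathbb{R}^3$. The paper instead avoids writing the block decomposition explicitly and proceeds by probing with specific elements of $\Sigma$: it evaluates the defining condition at $e_i$ (for $i=1,2,3$) to get $\pi(c)=0$ and at $e_i+e_4$ to get $B(e_4)\cdot e_i=0$, which together yield the block form. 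Your approach is slightly more systematic and makes the ``where the constraint lives'' structure transparent (the coefficient of $x_4$ versus the constant term), while the paper's is a bit more compact since it sidesteps the full block notation; both rely on the same two facts (skew-symmetry kills the $\tilde B$ term, and the $x_4$ coordinate is unconstrained on $\Sigma$) and are comparably elementary.
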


\proof
Let $e_1,e_2,e_3,e_4$ denote the standard basis vectors in $\mathbb{R}^4$. 

If $\eta\in \mathcal{T}(\Sigma)$ then
$$\pi(c) = \sum_{i=1}^3 \left(\pi(\eta(e_i))\cdot \pi(e_i)\right)\pi(e_i)=0.$$
Also, note that $e_i+e_4\in\Sigma$ for $i=1,2,3$ and so,
$$B(e_4)\cdot e_i
=\pi(\eta(e_i+e_4))\cdot \pi(e_i+e_4) 
= 0.$$
Thus $B$ has the form,
 $$B=\begin{bmatrix} \tilde{B} & 0 \\ 0 & 0 \end{bmatrix},$$
 where $\tilde{B}$ is a $3\times 3$ skew-symmetric matrix.

For the converse, suppose the linear part of $\eta$ has the form
 $$B=\begin{bmatrix} \tilde{B} & 0 \\ 0 & 0 \end{bmatrix},$$
 where $\tilde{B}$ is a $3\times 3$ skew-symmetric matrix, and the translational part $c$ satisfies $\pi(c)=0$.
Since $\tilde{B}$ is skew-symmetric, if $x\in \Sigma$ then
$$\pi(\eta(x))\cdot\pi(x)
=\tilde{B}(\pi(x))\cdot\pi(x)=0.$$
Thus, $\eta\in \mathcal{T}(\Sigma)$.
\endproof

\subsection{Frameworks in the hypercylinder} For a graph \( G  = (V,E) \), a \emph{placement} of \( G  \) in 
\( \Sigma \) is a vector \( q=(q_v)_{v\in V}\in \Sigma^V \). A pair $(G,q)$ consisting of a graph $G$ and 
a placement $q$ is called
a \emph{(bar-joint) framework} in $\Sigma$. 
A {\em subframework} of $(G,q)$ is a framework $(H,q^H)$ where $H$ is a  subgraph of $G$ and $q^H_v=q_v$ for all $v\in V(H)$.

We say that $(G,q)$ is {\em full} in $\Sigma$ if the restriction map, $$\rho:\mathcal{T}(\Sigma)\to (\mathbb{R}^4)^V,\quad
\eta\mapsto (\eta(q_v))_{v\in V},$$ is injective.
In this case we refer to $q$ as a {\em full placement} of $G$ in $\Sigma$.
We say that $(G,q)$ is {\em completely full} in $\Sigma$ if every subframework of $(G,q)$ with at least $6$ vertices is full in $\Sigma$.

\begin{lem}
Let $(G,q)$ be a framework in $\Sigma$ and let $S=\{q_v:v\in V\}$.
Then $(G,q)$ is full in $\Sigma$ if and only if 
$\pi(S)$ contains at least two linearly independent vectors.
\end{lem}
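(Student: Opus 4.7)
The plan is to use Lemma \ref{l:rigidmotions} to make the restriction map $\rho$ completely explicit and then reduce fullness to an elementary statement about $3\times 3$ skew-symmetric matrices. By that lemma, any $\eta \in \mathcal{T}(\Sigma)$ has $\eta(x) = B(x) + c$ where the linear part is block-diagonal with a $3\times 3$ skew-symmetric block $\tilde{B}$ in the top-left and zeroes elsewhere, and $c = (0,0,0,t)$ for some $t \in \mathbb{R}$ (since $\pi(c)=0$ forces $c$ to lie on the $w$-axis). Writing each joint as $q_v = (\pi(q_v), w_v)$, a direct computation yields $\eta(q_v) = (\tilde{B}\,\pi(q_v),\, t)$.

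Since $\rho$ is linear, injectivity is equivalent to triviality of its kernel. From the formula above, $\eta \in \ker \rho$ if and only if $t = 0$ and $\tilde{B}\,\pi(q_v) = 0$ for every $v \in V$. The translational parameter is therefore automatically pinned to zero by the fourth coordinate, so the question reduces to asking whether the only skew-symmetric $3\times 3$ matrix $\tilde{B}$ annihilating every vector in $\pi(S)$ is the zero matrix.

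At this point I would invoke the standard identification of skew-symmetric $3\times 3$ matrices with $\mathbb{R}^3$ via the cross product: every such $\tilde{B}$ acts as $x \mapsto \omega \times x$ for a unique $\omega \in \mathbb{R}^3$, and $\tilde{B} = 0$ if and only if $\omega = 0$. The condition $\omega \times \pi(q_v) = 0$ for every $v$ holds exactly when each element of $\pi(S)$ is a scalar multiple of $\omega$, i.e.\ when $\pi(S)$ is contained in a one-dimensional subspace of $\mathbb{R}^3$. A nonzero such $\tilde{B}$ therefore exists precisely when $\pi(S)$ lies on a single line through the origin, which is the negation of the hypothesis that $\pi(S)$ contains two linearly independent vectors. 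This gives the claimed equivalence.

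There is no real obstacle here: everything is a short linear-algebra exercise once Lemma \ref{l:rigidmotions} has fixed the shape of $\mathcal{T}(\Sigma)$. The only point worth verifying carefully is the cross-product identification, so that the statement ``every $\pi(q_v)$ is annihilated by $\tilde{B}$'' translates faithfully into the collinearity of the projected joints.
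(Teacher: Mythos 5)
Your proof is correct. Both you and the paper begin by invoking Lemma \ref{l:rigidmotions} to reduce fullness of $(G,q)$ to a question about which $3\times 3$ skew-symmetric matrices $\tilde{B}$ annihilate all of $\pi(S)$; the translational coordinate $t$ is killed immediately by the fourth component in both arguments. Where you diverge is in the linear algebra that closes the reduction. The paper handles the two directions asymmetrically: for the forward implication it constructs an explicit one-parameter family of rotations $A(\theta)$ about an axis containing $\pi(S)$ and differentiates at $\theta=0$ to manufacture a nontrivial element of the kernel of $\rho$; for the converse it observes that a nonzero skew-symmetric $3\times 3$ matrix has even rank, hence rank $2$ and a one-dimensional kernel, so $\pi(S)$ cannot span a plane. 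You instead use the axial-vector (cross-product) identification $\tilde{B}x = \omega \times x$, which immediately gives $\ker\tilde{B} = \operatorname{span}(\omega)$ for $\omega\ne 0$ and handles both directions at once: a nonzero annihilating $\tilde{B}$ exists precisely when $\pi(S)$ is collinear. The net content is the same, but your route is more symmetric and avoids both the rotation construction and the rank-parity appeal in favor of one elementary identification that is special to dimension $3$ (which is exactly the dimension needed here).
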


\proof
Suppose $\pi(S)$ does not contain two linearly independent vectors. Let $A(\theta)\in O(3)$ be the rotation matrix with rotation axis spanned by $\pi(S)$, where $\theta$ denotes the angle of rotation. 
Let $B$ denote the skew-symmetric matrix  $$B=\begin{bmatrix} A'(0) & 0 \\ 0 & 0 \end{bmatrix}$$
Let $\eta\in\mathcal{T}(\mathbb{R}^4)$ be the infinitesimal rigid motion with $\eta(x)=B(x)$. Then, by Lemma \ref{l:rigidmotions},  $\eta\in \mathcal{T}(\Sigma)$. Also, the rotation axis for $A(\theta)$ lies in the kernel of $A'(0)$ and so $\eta(S)=(A'(0)(\pi(S)),0)=0$. 
Thus $(G,q)$ is not full.

For the converse, suppose there exists a non-zero $\eta\in \mathcal{T}(\Sigma)$ such that $\eta(S)=0$.
Note that, by Lemma \ref{l:rigidmotions},  $\eta(x)=\tilde{B}(\pi(x))+c$ for some non-zero $3\times 3$ skew-symmetric matrix $\tilde{B}$ and some $c=(0,0,0,w)\in \mathbb{R}^4$.
Now $\tilde{B}(\pi(S))=\pi(\eta(S))=0$. The rank of a skew-symmetric matrix is always even
and so the kernel of $\tilde{B}$ must have dimension $1$.
We conclude that $\pi(S)$ does not contain two linearly independent vectors.
\endproof

We denote by $\Full(G;\Sigma)$ the set of all full placements of $G$ in $\Sigma$. Note that by the above lemma, $\Full(G;\Sigma)$ is an open and dense subset of $\Sigma^V$.

\subsection{Rigidity in the hypercylinder}
Let $(G,q)$ be a framework in $\Sigma$.
An \emph{infinitesimal flex} of \( (G,q ) \) is a vector
\( m=(m_v)_{v\in V} \in (\mathbb R^4)^V \) that satisfies,
\begin{equation}
    (m_u-m_v)\cdot(q_u-q_v) = 0 \qquad \textrm{ for every } uv \in E, 
    \label{eq_edge}
\end{equation}
and,
\begin{equation}
    \pi(m_v)\cdot\pi(q_v) = 0 \qquad \textrm{ for every } v \in V.
    \label{eq_tang}
\end{equation} 
The constraints in (\ref{eq_edge}) are the standard Euclidean first-order length constraints 
for the edges of $G$, and the constraints in (\ref{eq_tang}) ensure that the velocity vectors of an infinitesimal flex 
lie in the tangent hyperplanes of $\Sigma$ at the corresponding points.

\begin{lem}
\label{lem:hcyltriv}
Let $(G,q)$ be a framework in $\Sigma$ and let $\eta\in\mathcal{T}(\Sigma)$.
Then the vector $m\in (\mathbb{R}^4)^V$ where,
$$m_v = \eta(q_v), \quad \forall \,v\in V,$$
is an infinitesimal flex of $(G,q)$.
\end{lem}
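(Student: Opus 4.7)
The proof plan is a direct verification of the two defining conditions of an infinitesimal flex, using the structural description of $\mathcal{T}(\Sigma)$ from Lemma \ref{l:rigidmotions} together with the definition of $\mathcal{T}(\Sigma)$ itself. There is no obstacle; the lemma records a routine compatibility between the ambient rigid motions and the two kinds of constraints.

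First I would write $\eta(x) = B(x) + c$ with $B$ a $4\times 4$ skew-symmetric matrix and $c \in \mathbb{R}^4$, as guaranteed because $\mathcal{T}(\Sigma) \subseteq \mathcal{T}(\mathbb{R}^4)$. For the Euclidean edge condition \eqref{eq_edge}, fix $uv \in E$ and compute
\[
m_u - m_v = \eta(q_u) - \eta(q_v) = B(q_u - q_v),
\]
so that
\[
(m_u-m_v)\cdot(q_u-q_v) = B(q_u-q_v)\cdot(q_u-q_v) = 0,
\]
the last equality holding because $B$ is skew-symmetric (equivalently, $y \cdot By = 0$ for every $y \in \mathbb{R}^4$).

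For the tangency condition \eqref{eq_tang}, fix $v \in V$. Since $q_v \in \Sigma$ and $\eta \in \mathcal{T}(\Sigma)$, the defining property of $\mathcal{T}(\Sigma)$ gives immediately
\[
\pi(m_v)\cdot\pi(q_v) = \pi(\eta(q_v))\cdot\pi(q_v) = 0.
\]
Combining the two calculations shows that $m$ satisfies both \eqref{eq_edge} and \eqref{eq_tang}, so $m$ is an infinitesimal flex of $(G,q)$, completing the proof.
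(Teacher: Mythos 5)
Your proof is correct and fills in exactly the routine verification that the paper's one-line proof (``readily verified using Lemma \ref{l:rigidmotions}'') leaves implicit: the edge condition follows from skew-symmetry of the linear part of $\eta$ after the translation cancels in $m_u-m_v$, and the tangency condition is immediate from the defining property of $\mathcal{T}(\Sigma)$. This is the same approach as the paper; no issues.
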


\proof
The conditions (\ref{eq_edge}) and (\ref{eq_tang}) are readily verified using Lemma \ref{l:rigidmotions}.
\endproof

We refer to the infinitesimal flexes described in Lemma $\ref{lem:hcyltriv}$ as {\em trivial} infinitesimal flexes of $(G,q)$. The set of all trivial infinitesimal flexes of $(G,q)$ is a linear subspace of $(\mathbb{R}^4)^V$, which we denote by $\mathcal{T}(q)$. 
The orthogonal projection of $(\mathbb{R}^{4})^V$ onto $\mathcal{T}(q)$ will be denoted $P_q$.
We say that \( (G ,q ) \) is \emph{infinitesimally rigid} if 
every infinitesimal flex of \( (G ,q ) \) is trivial.
 Otherwise $(G,q)$ is
\emph{infinitesimally flexible}.

\begin{lem}
\label{l:proj}
Let $G=(V,E)$ be a graph and let $x\in (\mathbb{R}^{4})^V$.
Then the map $$\phi_x:\Full(G;\Sigma)\to \mathcal{T}(q),\quad q\mapsto P_q(x),$$ is continuous.
\end{lem}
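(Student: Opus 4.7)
The plan is to express $P_q$ in terms of an orthonormal basis of $\mathcal{T}(q)$ that depends continuously on $q$, and then read off continuity of $\phi_x$ directly.

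First I would fix, once and for all, a basis $\eta_1,\eta_2,\eta_3,\eta_4$ of the four-dimensional space $\mathcal{T}(\Sigma)$ (its dimension is noted at the start of the section). For each $q\in\Full(G;\Sigma)$, the restriction map $\rho\colon\mathcal{T}(\Sigma)\to(\mathbb{R}^4)^V$, $\eta\mapsto(\eta(q_v))_{v\in V}$, is by definition injective, so the vectors
\[
v_i(q) \;:=\; (\eta_i(q_v))_{v\in V} \in (\mathbb{R}^4)^V, \qquad i=1,\ldots,4,
\]
form a basis of $\mathcal{T}(q)=\rho(\mathcal{T}(\Sigma))$. Since each $\eta_i$ is an affine map of $\mathbb{R}^4$, each coordinate of $v_i(q)$ depends polynomially, hence continuously, on $q$.

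Next I would apply the Gram--Schmidt process to $v_1(q),\ldots,v_4(q)$ to obtain an orthonormal basis $b_1(q),\ldots,b_4(q)$ of $\mathcal{T}(q)$. Gram--Schmidt uses only linear combinations, inner products, square roots of positive quantities, and divisions by nonzero norms; because the $v_i(q)$ are linearly independent throughout $\Full(G;\Sigma)$, none of the normalisations encounters a zero divisor, so the $b_i$ are continuous $(\mathbb{R}^4)^V$-valued functions on $\Full(G;\Sigma)$. The projection then admits the closed form
\[
P_q(x) \;=\; \sum_{i=1}^{4}\langle x,b_i(q)\rangle\, b_i(q),
\]
which is a continuous function of $q$ since each inner product and each $b_i(q)$ is continuous in $q$. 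This yields continuity of $\phi_x$.

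The only real obstacle is justifying that Gram--Schmidt proceeds without breakdown as $q$ varies, i.e.\ that the intermediate vectors never vanish. This reduces exactly to the linear independence of $v_1(q),\ldots,v_4(q)$, which is the defining property of a full placement and is therefore automatic on the domain $\Full(G;\Sigma)$. Once that is noted, the argument is standard linear algebra and requires no further analysis.
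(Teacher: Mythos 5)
Your proof is correct and follows essentially the same approach as the paper: the paper also exhibits a continuously varying basis of $\mathcal{T}(q)$ (explicit vectors $m_1(q),\dots,m_4(q)$) and then invokes continuous dependence of $P_q(x)$ on that basis. You simply make the final step explicit via Gram--Schmidt, which is a fine elaboration of what the paper leaves as an assertion.
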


\proof
    If $(G,q)$ is full in $\Sigma$ then a basis for $\mathcal{T}(q)$ is given by
    the vectors $m_1(q),\ldots,m_4(q)$ where for each $v\in V$, we have
    $m_1(q)_v = (q_v^3,0,-q_v^1,0),\,
    m_2(q)_v = (q_v^2,-q_v^1,0,0),\,
    m_3(q)_v = (0,q_v^3,-q_v^2,0), \,
    m_4(q)_v = (0,0,0,1)$.
    The result follows since  $P_q(x)$ depends continuously on the basis vectors $m_1(q),\ldots,m_4(q)$.
    \endproof

\subsection{The rigidity matrix}
Let $(G,q)$ be a framework in $\Sigma$.
The \emph{rigidity matrix} \( R(G ,q ) \) for $(G,q)$ in $\Sigma$ is the matrix
corresponding to the linear system in  (\ref{eq_edge}) and (\ref{eq_tang}). This matrix is
a  \( (|E| + |V|) \times
4|V|\)  matrix of the following form.  The rows are indexed by the 
set \( E \cup V \) and the columns are indexed in collections of 
four by the set \( V \).  For an edge \( uv \in E \) the
corresponding row has entries \( q_u-q_v  \) in the collection
of columns corresponding to \( u \) and \( q_v -q_u \)
in the collection of columns corresponding to \( v \) and zeroes in all 
other columns. For a vertex 
\( v \in V\) the corresponding row has entries 
\( (\pi(q_v),0) \) in the collection of columns indexed by \( v \) and
zeroes in all other columns.

\begin{lem}
\label{lem:hcylrank}
Let \( (G,q)  \) be a full framework in $\Sigma$.
Then \( (G ,q ) \) is infinitesimally rigid if and only if 
\( \rank R(G ,q ) = 4|V| - 4 \).
\end{lem}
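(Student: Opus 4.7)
The plan is to apply the rank--nullity theorem to $R(G,q)$ and to compute $\dim \mathcal{T}(q)$ from the fullness hypothesis. By construction, a vector $m \in (\mathbb{R}^4)^V$ lies in $\ker R(G,q)$ if and only if $m$ satisfies the edge equations (\ref{eq_edge}) and the tangency equations (\ref{eq_tang}), so $\ker R(G,q)$ is precisely the space of infinitesimal flexes of $(G,q)$. Lemma \ref{lem:hcyltriv} further guarantees the inclusion $\mathcal{T}(q) \subseteq \ker R(G,q)$, so infinitesimal rigidity of $(G,q)$ is equivalent to the equality $\ker R(G,q) = \mathcal{T}(q)$.

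Next I would pin down the dimension of $\mathcal{T}(q)$. By Lemma \ref{l:rigidmotions}, every element of $\mathcal{T}(\Sigma)$ is determined by a $3\times 3$ skew-symmetric matrix $\tilde B$ together with a translation vector of the form $(0,0,0,w)$; this gives $\dim \mathcal{T}(\Sigma) = 3 + 1 = 4$. Since $(G,q)$ is full, the restriction map $\rho : \mathcal{T}(\Sigma) \to (\mathbb{R}^4)^V$ is injective, and its image is exactly $\mathcal{T}(q)$. Therefore $\dim \mathcal{T}(q) = 4$.

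Combining these observations, $(G,q)$ is infinitesimally rigid if and only if $\dim \ker R(G,q) = \dim \mathcal{T}(q) = 4$ (using the inclusion $\mathcal{T}(q) \subseteq \ker R(G,q)$). By rank--nullity applied to the linear map $R(G,q) : \mathbb{R}^{4|V|} \to \mathbb{R}^{|E|+|V|}$, this is equivalent to $\rank R(G,q) = 4|V| - 4$, as claimed.

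There is no substantive obstacle here; the argument is a straightforward packaging of the preceding lemmas. The only subtle point, and the one worth stating carefully, is the computation that $\dim \mathcal{T}(q) = 4$ under the fullness assumption, which uses both the structure theorem for $\mathcal{T}(\Sigma)$ supplied by Lemma \ref{l:rigidmotions} and the definition of full placement as injectivity of $\rho$.
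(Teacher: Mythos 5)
Your proof is correct and follows essentially the same route as the paper's: identify $\ker R(G,q)$ with the space of infinitesimal flexes, observe $\dim\mathcal{T}(q)=\dim\mathcal{T}(\Sigma)=4$ from fullness, and apply rank--nullity. The only difference is that you spell out the count $3+1=4$ for $\dim\mathcal{T}(\Sigma)$ via Lemma~\ref{l:rigidmotions}, which the paper leaves implicit.
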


\proof
Note that the kernel of $R(G,q)$ is the linear space of infinitesimal flexes of $(G,q)$. Thus,  \( (G ,q ) \) is infinitesimally rigid if and only if 
$\ker R(G,q)=\mathcal{T}(q)$. Also, note that $\rank R(G,q)=4|V|-\dim \ker R(G,q)$.
Since $(G,q)$ is full in $\Sigma$, $\dim \mathcal{T}(q)=\dim \mathcal{T}(\Sigma)=4$. The result now follows.
\endproof

All of the above discussion is by way
of context for the following result, which provides necessary conditions
for a full framework in $\Sigma$ to be {\em minimally} infinitesimally rigid. We say that 
a graph $G=(V,E)$ is \emph{ \( (3,4) \)-tight} if $|E|=3|V|-4$ and  $|E'|\leq 3|V'|-4$ for every subgraph 
$G'=(V',E')$ containing at least one edge.

\begin{thm}
    \label{thm_hypercyclinder_necessary}
    Suppose that \( G=(V,E)  \) has at least six vertices
    and that  \( (G ,q ) \) is completely full and  infinitesimally
    rigid in $\Sigma$. Furthermore suppose that for any \( e \in E \),
    \( (G -e,q ) \) is not infinitesimally rigid. Then 
    \( G  \) is \( (3,4) \)-tight.
\end{thm}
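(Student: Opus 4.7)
The plan is to translate both conclusions—the edge count $|E| = 3|V|-4$ and the hereditary sparsity inequality—into rank statements about $R(G,q)$ and its restrictions, then exploit the minimal-rigidity hypothesis together with the completely-full hypothesis.

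First I would establish that the full collection of rows of $R(G,q)$ is linearly independent. The minimality hypothesis guarantees that deletion of any edge row strictly decreases $\rank R(G,q)$, so no edge row lies in the span of the remaining rows. Consequently, in any linear dependence $\sum_{e\in E} a_e R_e + \sum_{v\in V} b_v R_v = 0$, each $a_e$ must vanish; the remaining dependence among vertex rows is then impossible, since the vertex rows have pairwise disjoint supports and each is nonzero (indeed $\|\pi(q_v)\|=1$ as $q_v\in\Sigma$). Therefore $\rank R(G,q) = |E| + |V|$. Combining this with Lemma \ref{lem:hcylrank}, which gives $\rank R(G,q) = 4|V|-4$ for the full framework $(G,q)$, yields $|E| = 3|V|-4$.

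For the sparsity count, let $G' = (V',E')$ be a subgraph containing at least one edge. The key observation is that each edge row and each vertex row of $R(G,q)$ indexed by elements of $G'$ has nonzero entries only in the block of columns indexed by $V'$, so the rows of $R(G', q^{G'})$ are literally restrictions of rows of $R(G,q)$ with no nonzero entry discarded. Linear independence therefore passes to the subgraph matrix, giving $\rank R(G',q^{G'}) = |E'|+|V'|$. When $|V'|\geq 6$, the completely-full hypothesis yields $\dim\mathcal{T}(q^{G'})=4$, and by Lemma \ref{lem:hcyltriv} we have $\mathcal{T}(q^{G'})\subseteq \ker R(G',q^{G'})$, so $\rank R(G',q^{G'})\leq 4|V'|-4$; combining gives $|E'|\leq 3|V'|-4$. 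For $2\leq |V'|\leq 5$, the same inequality follows from $|E'|\leq \binom{|V'|}{2}\leq 3|V'|-4$ by simplicity of $G'$.

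The only delicate step is the first one, deducing global linear independence of the entire row set from the per-edge minimality assumption; after that, the subgraph argument is essentially bookkeeping, with the completely-full hypothesis invoked precisely to supply the rank bound $4|V'|-4$ for subframeworks of size at least six.
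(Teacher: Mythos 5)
Your proof is correct and proceeds along essentially the same lines as the paper's: both arguments hinge on the structure of the rows of $R(G,q)$ (the vertex rows having disjoint nonzero supports, edge rows and vertex rows indexed by a subgraph touching only the columns of that subgraph) combined with Lemma \ref{lem:hcylrank} and the minimality hypothesis. The one presentational difference is that you establish once that the entire row set of $R(G,q)$ is linearly independent and then read off the exact rank $|E|+|V|=4|V|-4$, whereas the paper argues the two inequalities $|E|\le 3|V|-4$ and $|E|\ge 3|V|-4$ by separate contradictions (flexibility in one direction, a redundant edge in the other); the content is the same and the paper's subgraph step is recovered exactly by your observation that row independence restricts to the subframework matrix.
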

\begin{proof} 
Since $(G,q)$ is full in $\Sigma$ we have $\dim \mathcal{T}(q)=4$.
If $|E|<3|V|-4$, then the rigidity matrix
$R(G,q)$ has rank less than $4|V|-4$. It follows that $\mathcal{T}(q)$ is a proper subspace of $\ker R(G,q)$ and so $(G,q)$ is
infinitesimally flexible, a contradiction. If $|E|>3|V|-4$, then 
$R(G,q)$ has a non-trivial row dependence $\omega\in \mathbb{R}^{E\,\cup\, V}$.
By the structure of $R(G,q)$, the rows of $R(G,q)$ indexed by $V$ 
are linearly independent, and hence $\omega_e\neq 0$ for some edge $e\in E$.
It follows that the removal of the edge $e$ does not decrease the rank of the
rigidity matrix and so $(G-e,q)$ is still infinitesimally rigid, a contradiction.

Similarly, if there is a non-trivial subgraph $G'=(V',E')$ with $|E'|> 3|V'|-4$, then,
by the simplicity of $G$,
$|V'|\geq 6$. 
Since $(G,q)$ is completely full in $\Sigma$, the subframework $(G',q_{G'})$ is full in $\Sigma$.
The $(|E'|+|V'|)\times 4|V'|$ submatrix of $R(G,q)$
corresponding to $G'$ has a non-trivial row dependence with a non-zero support on one of the edges of $G'$. Thus, as above, it follows that the removal 
of this edge from $G$ leaves the framework infinitesimally rigid, a contradiction.
 This gives the result.
\end{proof}

On the other hand we may ask if, given a \( (3,4) \)-tight
graph \( G  \), there is a placement \( q  \) of 
\( G  \) in \( \Sigma \) such that \( (G ,q ) \) is minimally infinitesimally 
rigid. In general this is open. However, in the following section we show this to be true whenever \( G  \) is the underlying graph of a doubly braced
triangulation. 

\subsection{Minimal rigidity of doubly braced triangulations}
We say that a placement \( q\in\Sigma^V\) of a graph \( G=(V,E)  \)
in \( \Sigma \) is \emph{regular} if 
the function,
\[r_G:\Sigma^V\to\mathbb{N},\quad x\mapsto \rank R(G,x)\]
achieves its maximum value at $q$.
Note that the set of regular placements of $G$ in $\Sigma$ is an open and dense subset of $\Sigma^V$. Moreover, if $(G,q)$ is infinitesimally rigid (respectively, flexible) in $\Sigma$ for some regular placement $q$ then every regular placement of $G$ in $\Sigma$ is infinitesimally rigid (respectively, flexible).
In this case, we say that the graph $G$ is rigid (respectively, flexible) in $\Sigma$.

\begin{lem}
    \label{lem_seven_rigid}
    The graph \( K_5\cup_{K_3}K_5 \)  is (minimally) rigid in the hypercylinder $\Sigma$.
\end{lem}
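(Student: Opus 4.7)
The plan is to prove infinitesimal rigidity at a single explicit full placement of $G = K_5\cup_{K_3}K_5$ and conclude by the edge count. Since $|V|=7$ and $|E|=17=3|V|-4$, Lemma~\ref{lem:hcylrank} reduces the task to producing a full placement $q \in \Sigma^V$ with $\dim \ker R(G,q) = 4$.

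Label the three shared vertices $v_1,v_2,v_3$ and the remaining vertices $u_1,u_2$ (in the first $K_5$) and $w_1,w_2$ (in the second). I would choose $q$ with the $v_i$ placed on the equator of the spherical factor of $\Sigma$, say
\[
q_{v_i} = (\cos(2\pi i/3),\,\sin(2\pi i/3),\,0,\,0),
\]
and with $u_1,u_2,w_1,w_2$ placed at generic points of $\Sigma$ having distinct, nonzero $w$-coordinates and projections $\pi(q_{u_j}),\pi(q_{w_j})$ in general position on $S^2$. The preceding lemma then ensures $(G,q)$ is full, so $\dim\mathcal{T}(q) = 4$.

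Writing any flex as $m_v = (\mu_v,\alpha_v) \in \bR^3 \oplus \bR$, the tangency equation at $v$ becomes $\mu_v \cdot \pi(q_v) = 0$ and the edge equation at $e=xy$ becomes
\[
(\mu_x - \mu_y) \cdot (\pi(q_x)-\pi(q_y)) + (\alpha_x-\alpha_y)(q_x^w-q_y^w) = 0.
\]
The equatorial triangle $v_1v_2v_3$ is rigid on $S^2$, and $\mathcal{T}(q)$ supplies precisely three dimensions of spherical rotation plus one dimension of $w$-translation; so after subtracting an appropriate trivial flex I may assume $\mu_{v_1}=\mu_{v_2}=\mu_{v_3}=0$ and $\alpha_{v_1}=0$. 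The remaining problem is a square $14 \times 14$ linear system: the $14$ unknowns are $\alpha_{v_2},\alpha_{v_3},\alpha_{u_1},\alpha_{u_2},\alpha_{w_1},\alpha_{w_2}$ together with the two in-tangent-plane degrees of freedom of each of $\mu_{u_1},\mu_{u_2},\mu_{w_1},\mu_{w_2}$ after imposing tangency at these four vertices, and the $14$ equations are the edge constraints for $u_kv_i,\,u_1u_2,\,w_kv_i,\,w_1w_2$.

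The main obstacle will be verifying non-singularity of this $14\times 14$ system. My plan is to exploit symmetry: the chosen placement admits a $D_3$ action permuting $v_1,v_2,v_3$ together with a $\bZ/2$ action reflecting $w\mapsto -w$ and swapping each $u_j$ with $w_j$, which decomposes the system into small isotypic blocks on which non-singularity can be checked by direct, short computation. As a fallback, since the entries of $R(G,q)$ are explicit one may just verify full row rank of $R(G,q)$ numerically for a single well-chosen placement; rigidity at the regular placement then follows, and the edge count $|E|=3|V|-4$ gives minimality.
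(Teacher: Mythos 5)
Your overall strategy --- exhibit a single full placement of $G=K_5\cup_{K_3}K_5$, invoke Lemma~\ref{lem:hcylrank}, and confirm $\rank R(G,q)=4|V|-4=24$, with minimality free from the edge count $|E|=17=3|V|-4$ --- is exactly the paper's strategy; the paper simply picks a concrete pseudo-random placement and verifies the rank directly. Your dimension bookkeeping leading to the residual $14\times14$ system is correct: the three edges $v_iv_j$ have $q_{v_i}^w-q_{v_j}^w=0$ so they constrain only the spherical components $\mu_{v_i}$, and modding out the $3+1$ trivial degrees of freedom plus using infinitesimal rigidity of $K_3$ on $S^2$ legitimately normalizes $\mu_{v_1}=\mu_{v_2}=\mu_{v_3}=0$ and $\alpha_{v_1}=0$.

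However, the route you propose for finishing the argument has a genuine inconsistency. You first place $u_1,u_2,w_1,w_2$ so that the projections $\pi(q_{u_j}),\pi(q_{w_j})$ are ``in general position on $S^2$,'' but then claim the placement admits a $D_3$ action permuting $v_1,v_2,v_3$ while fixing the other four vertices. A rotation of $\mathbb{R}^3$ by $2\pi/3$ about the $z$-axis fixes $\pi(q_{u_j})$ only if $\pi(q_{u_j})$ lies on the $z$-axis, i.e.\ at a pole of $S^2$ --- which is the opposite of general position. So the placement as described cannot carry the $D_3$ symmetry, and the proposed isotypic block decomposition does not exist for it. If you instead force the symmetry by sending $u_1,u_2,w_1,w_2$ to the poles (with $\mathbb{Z}/2$-matched $w$-coordinates), you lose the ``genericity'' hedge and must still prove the resulting $14\times14$ system is nonsingular; that is not done, and polar placements introduce degeneracies (e.g.\ the edge $u_1u_2$ then only constrains $\alpha_{u_1}-\alpha_{u_2}$) that make this nontrivial. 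In the end, your stated fallback --- choose a concrete placement and check $\rank R(G,q)=24$ directly --- is precisely what the paper does, so the only part of your proposal that is actually complete coincides with the paper's proof, while the symmetry route as written would not go through.
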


\begin{proof}
    By Lemma \ref{lem:hcylrank}, it suffices to find a particular placement of the graph 
    whose associated rigidity matrix has rank 24. Since a randomly chosen 
    matrix will, with probability 1, yield a rigidity matrix with maximum rank
    it is easy to find such a placement. For example we have verified that the 
    following placement yields the required rank:
    $$q_1 = \frac1{\sqrt3}(1,1,1,1) \quad\quad
    q_2 = \frac1{\sqrt{17}}(3,2,2,1) \quad\quad
    q_3 = \frac1{\sqrt{17}}(2,3,2,3)$$ $$
    q_4 = \frac1{\sqrt{11}}(1,3,1,1)
    \quad\quad\quad
    q_5 = \frac1{\sqrt{17}}(3,2,2,2)$$
    $$ q_6 = \frac1{\sqrt{14}}(2,3,1,1) 
    \quad\quad\quad
    q_7 = \frac1{\sqrt{14}}(2,1,3,2) $$
        where one $K_5$ is induced by $q_1,\dots,q_5$ and the other is induced by 
    $q_3,\dots,q_7$.
\end{proof}

\begin{lem}
    \label{lem_K6minusedge}
   The graph  \( K_6-e \)  is (minimally) rigid in the hypercylinder $\Sigma$.
\end{lem}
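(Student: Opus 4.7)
The plan is to mimic the proof of Lemma \ref{lem_seven_rigid}. First, note that $K_6-e$ has $|V|=6$ and $|E|=14=3\cdot 6-4$, so its rigidity matrix $R(K_6-e,q)$ is a $(|E|+|V|)\times 4|V| = 20\times 24$ matrix. By Lemma \ref{lem:hcylrank}, for a full placement $q$ the framework $(K_6-e,q)$ is infinitesimally rigid precisely when $\rank R(K_6-e,q) = 4|V|-4 = 20$, i.e.\ all $20$ rows are linearly independent. So the task reduces to producing a single full placement $q\in \Sigma^V$ at which this matrix has full row rank.

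Since the entries of $R(G,x)$ depend polynomially on $x\in\Sigma^V$, the locus where the rank is maximal is Zariski open, and hence open and dense in $\Sigma^V$ provided it is nonempty. Thus essentially any ``random-looking'' full placement on the hypercylinder will suffice, and the verification is a finite computation: pick six explicit points $q_1,\dots,q_6$ on $\Sigma$ (chosen so that their projections to $\mathbb{R}^3$ span and no accidental alignments occur), assemble the $20\times 24$ rigidity matrix, and evaluate a single $20\times 20$ minor. For concreteness I would use a placement of the same flavour as in Lemma \ref{lem_seven_rigid}, e.g.\ normalised vectors with small integer entries in the first three coordinates together with varying fourth coordinates, removing the edge $e$ from $K_6$ at the end.

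The only real ``obstacle'' is the computational step of checking the minor is nonzero, which is routine (and can be discharged by a computer algebra system or symbolic substitution). Conceptually there is nothing beyond Lemma \ref{lem:hcylrank} and semicontinuity of rank, so the lemma follows once such a certificate placement is exhibited.
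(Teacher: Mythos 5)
Your proposal takes essentially the same approach as the paper: reduce via Lemma~\ref{lem:hcylrank} to exhibiting a single full placement whose $20\times 24$ rigidity matrix has rank $20$, and rely on density of regular placements to justify that a generic certificate works. The one thing missing is the certificate itself—the paper actually records an explicit placement (six normalized integer-entry points on $\Sigma$, with the deleted edge $q_5q_6$) for which the rank condition has been verified, and the lemma is not complete until some such concrete placement is exhibited and checked.
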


\begin{proof}
As for Lemma \ref{lem_seven_rigid}, it suffices to find one placement 
that yields a rigidity matrix of rank 20. In this case the placement 
$$q_1 = \frac1{\sqrt{11}}(1,3,1,1) 
\quad\quad
 q_2 = \frac1{3}(2,2,1,1) 
 \quad\quad
 q_3 = \frac1{\sqrt{22}}(3,2,3,3) $$
$$ q_4 = \frac1{\sqrt{22}}(3,3,2,2) 
\quad\quad
 q_5 = \frac1{\sqrt{27}}(3,3,3,1) 
 \quad\quad
 q_6 = \frac1{\sqrt{14}}(1,2,3,2) $$
where the missing edge is between $q_5$ and $q_6$, 
yields the required rank.
\end{proof}

For each $k\in\mathbb{N}$ define,
$$H_k = \left\{x=(x_1,x_2,x_3,x_4)\in\mathbb{R}^4:\, x_1^2+x_2^2+x_3^2<\frac{1}{k^4}\, \mbox{ and }\,\frac{1}{2k}<x_4<\frac{1}{k}\right\}.$$
Note that $H_k$ is the interior of a truncated hypercylinder with radius $\frac{1}{k^2}$ and height $\frac{1}{2k}$. 

\begin{lem}
\label{lem_Hk}
Let $k\in\mathbb{N}$, let $x\in H_k$ and let $e_4=(0,0,0,1)\in\mathbb{R}^4$.
Then 
$$\left\Vert\frac{x}{\|x\|} -e_4\right\Vert<\frac{2\sqrt{2}}{k}.$$
\end{lem}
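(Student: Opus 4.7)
The plan is to reduce the claim to a single inequality about the fourth coordinate of the normalized vector $y := x/\|x\|$, and then exploit the pinched truncated region $H_k$ to bound that coordinate.

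First I would compute $\|y - e_4\|^2$ directly. Since $\|y\| = 1$, expanding the inner product gives
\begin{equation*}
\left\|\tfrac{x}{\|x\|} - e_4\right\|^2 = 2 - 2\,\tfrac{x_4}{\|x\|}.
\end{equation*}
So the desired bound $\|y-e_4\| < 2\sqrt{2}/k$ is equivalent to the single scalar inequality $y_4 = x_4/\|x\| > 1 - 4/k^2$.

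Next I would bound $1 - y_4^2$ rather than $1 - y_4$ itself, because the former decouples cleanly into the two ingredients supplied by the definition of $H_k$. Writing $r^2 = x_1^2+x_2^2+x_3^2$, so that $\|x\|^2 = x_4^2 + r^2$, one obtains
\begin{equation*}
1 - y_4^2 \;=\; \frac{r^2}{x_4^2 + r^2} \;\leq\; \frac{r^2}{x_4^2}.
\end{equation*}
The two defining constraints of $H_k$ then yield $r^2 < 1/k^4$ and $x_4^2 > 1/(4k^2)$, so $r^2/x_4^2 < 4/k^2$, and hence $1 - y_4^2 < 4/k^2$.

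Finally I would upgrade this to a bound on $1 - y_4$. Since $x_4 > 0$, also $y_4 > 0$, so $1 + y_4 \geq 1$ and therefore
\begin{equation*}
1 - y_4 \;=\; \frac{1 - y_4^2}{1 + y_4} \;\leq\; 1 - y_4^2 \;<\; \frac{4}{k^2}.
\end{equation*}
Substituting back into the identity from the first step gives $\|y - e_4\|^2 = 2(1-y_4) < 8/k^2$, and taking square roots completes the proof. No step is really an obstacle here; the only mild subtlety is the choice to pass through $y_4^2$ (rather than $y_4$), which is what converts the two $H_k$ bounds into the factor $4/k^2$ with no wasted slack.
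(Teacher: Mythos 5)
Your proof is correct. It reaches the same bound $\|y-e_4\|^2 < 8/k^2$ as the paper, but by a somewhat different algebraic route. The paper expands $\|x/\|x\|-e_4\|^2$ coordinate-wise as $(x_1^2+x_2^2+x_3^2+(x_4-\|x\|)^2)/\|x\|^2$, bounds $1/\|x\|^2 < 4k^2$, and then uses $x_4\le\|x\|$ to reduce $(x_4-\|x\|)^2$ to $\|x\|^2-x_4^2=x_1^2+x_2^2+x_3^2$, picking up the two factors of $1/k^4$ from the radial constraint. You instead exploit that both $y=x/\|x\|$ and $e_4$ are unit vectors, giving the clean identity $\|y-e_4\|^2=2(1-y_4)$, and then pass through $1-y_4^2 = r^2/(x_4^2+r^2)$ so that the two $H_k$ constraints decouple directly into the factor $4/k^2$; the final step $1-y_4\le 1-y_4^2$ (valid because $y_4>0$) closes the argument. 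Your version is arguably a little more structured — the geometric content (why the normalized point is close to the pole) is visible in the single scalar $y_4$ — but the two proofs use exactly the same inputs and arrive at the identical constant.
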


\proof
Note that $0<x_4\leq \|x\|$ and $\frac{1}{\|x\|}<2k$. We have,
\begin{eqnarray*}
\left\Vert\frac{x}{\|x\|} -e_4\right\Vert^2
&=& \left\Vert\frac{(x_1,x_2,x_3,x_4-\|x\|)}{\|x\|}\right\Vert^2 \\
&<& \left(\frac{1}{k^4}+(x_4-\|x\|)^2\right)4k^2 \\
&=& \left(\frac{1}{k^4}+\|x\|^2+x_4^2-2x_4\|x\|\right)4k^2 \\
&\leq& \left(\frac{1}{k^4}+\|x\|^2-x_4^2\right)4k^2 \\
&=& \left(\frac{1}{k^4}+x_1^2+x_2^2+x_3^2\right)4k^2 \\
&<& \frac{8}{k^2}.
\end{eqnarray*}

\endproof

\begin{prop}
    \label{prop_split_rigid}
    Suppose that \( G  \) is (minimally) rigid in the hypercylinder $\Sigma$ and that 
    \( G ' \) is obtained from \( G  \) by a $3$-dimensional vertex splitting 
    move. Then  \( G '\)  is also (minimally) rigid in $\Sigma$ 
\end{prop}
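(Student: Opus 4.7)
The plan is to adapt Whiteley's classical vertex-splitting argument (cf.\ \cite{WWsplitting}) to the hypercylinder via a limit argument, using the $H_k$-regions from Lemma \ref{lem_Hk} to control the placement of the new vertex. Fix a regular placement $q$ of $G$ in $\Sigma$, so that $(G,q)$ is infinitesimally rigid. Applying an isometry of $\Sigma$ if necessary, and using the freedom of a generic choice of $q$, construct placements $q'_k$ of $G'$ by setting $q'_{k,v}=q_v$ for $v\neq v_0$ and $q'_{k,v_0}=q_{v_1}+y_k$, where $y_k\in H_k$ is chosen so that $q'_{k,v_0}\in\Sigma$. By Lemma \ref{lem_Hk}, the unit vectors $y_k/\|y_k\|$ converge to $e_4=(0,0,0,1)$, which is tangent to $\Sigma$ at every point, and for large $k$ the placements $q'_k$ are full in $\Sigma$.

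Suppose for contradiction that each $(G',q'_k)$ is infinitesimally flexible, and choose non-trivial infinitesimal flexes $m^k\in\mathcal{T}(q'_k)^\perp$ with $\|m^k\|=1$. Extract a subsequential limit $m^\ast$ with $\|m^\ast\|=1$. The limit placement $q^\ast$ (with $q^\ast_{v_0}=q_{v_1}$) is still full in $\Sigma$, so Lemma \ref{l:proj} yields $m^\ast\in\mathcal{T}(q^\ast)^\perp$. Passing to the limit in the edge and tangent constraints of $(G',q'_k)$---and dividing the $v_0v_1$-edge constraint by $\|y_k\|$ before taking the limit---the vector $\delta=m^\ast_{v_0}-m^\ast_{v_1}\in\mathbb{R}^4$ is orthogonal to each of the four vectors $e_4$, $q_{v_1}-q_{v_2}$, $q_{v_1}-q_{v_3}$, and $(\pi(q_{v_1}),0)$. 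Specifically, $\delta\cdot e_4=0$ from the $v_0v_1$-constraint, $\delta\cdot(q_{v_1}-q_{v_i})=0$ from subtracting the limit $v_0v_i$ and $v_1v_i$ constraints for $i=2,3$, and $\delta\cdot(\pi(q_{v_1}),0)=0$ from the pair of tangent conditions at $v_0$ and $v_1$ in the limit. By genericity of $q$, these four vectors are linearly independent, so $\delta=0$.

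With $m^\ast_{v_0}=m^\ast_{v_1}$, the vector $m$ on $V(G)$ obtained by restricting $m^\ast$ (identifying $v_0$ with $v_1$) is an infinitesimal flex of $(G,q)$, because transferred edges $v_0x$ in $G'$ correspond to original edges $v_1x$ in $G$ with matching constraints at $q^\ast$ (thanks to $\delta=0$). By infinitesimal rigidity of $(G,q)$, $m$ is trivial, so $m=(\eta(q_v))_v$ for some $\eta\in\mathcal{T}(\Sigma)$; this $\eta$ also gives a trivial flex of $(G',q^\ast)$ that agrees with $m^\ast$, forcing $m^\ast\in\mathcal{T}(q^\ast)\cap\mathcal{T}(q^\ast)^\perp=\{0\}$ and contradicting $\|m^\ast\|=1$. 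Minimality of $G'$ follows from the fact that a $3$-dimensional vertex split preserves the count $|E|=3|V|-4$. The main technical hurdle is arranging the linear independence of the four vectors that force $\delta=0$; the $H_k$ construction is precisely what pins the limiting splitting direction to $e_4$ (automatically tangent to $\Sigma$), while the generic choice of $q$ ensures $q_{v_1}-q_{v_2}$ and $q_{v_1}-q_{v_3}$ are in sufficiently general position relative to $e_4$ and to the normal direction $(\pi(q_{v_1}),0)$.
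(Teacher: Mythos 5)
Your proposal is correct and follows essentially the same strategy as the paper's proof of Lemma 5.1-style vertex splitting: constructing a sequence of placements of $G'$ collapsing onto the ``doubled'' placement $q'$ of $G$, with the new vertex $v_0$ tethered to $q_{v_1}$ through the truncated regions $H_k$ of Lemma \ref{lem_Hk} so that the limiting splitting direction is $e_4$, extracting a unit-norm subsequential limit of projection-free non-trivial flexes via Lemma \ref{l:proj}, and forcing $m^*_{v_0}=m^*_{v_1}$ by orthogonality to the same four vectors $q_{v_1}-q_{v_2}$, $q_{v_1}-q_{v_3}$, $n_1=(\pi(q_{v_1}),0)$, and $e_4$. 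The only cosmetic differences are that you argue directly by contradiction (assuming all $(G',q'_k)$ flexible) rather than via the contrapositive, and you hold the other vertices fixed at $q_v$ rather than choosing regular placements in shrinking neighborhoods; both variants are sound since a full, infinitesimally rigid placement of $G'$ is automatically regular.
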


\begin{proof}
We adapt the proof of Lemma 5.1 in \cite{nop}.

Suppose $G=(V,E)$ has $n$ vertices $v_1, v_2,\ldots, v_n$. Let $G'=(V',E')$ be obtained from $G$ by a 3-dimensional vertex splitting move at the vertex $v_1$ on the edges $v_1v_2$ and $v_1v_3$. Let $V'=V\cup \{v_0\}$.
We will show that if $G'$ is flexible in $\Sigma$ then $G$ is also  flexible in $\Sigma$.

Suppose $G'$ is  flexible in $\Sigma$ and
let $q\in \Sigma^V$ be a regular placement of $G$ in $\Sigma$. For convenience we will write, 
$$q=(q_{v_1},q_{v_2},\ldots, q_{v_n})=(q_1,q_2,\ldots, q_n).$$
Let $n_1\in \mathbb{R}^4$ be a normal vector to the tangent plane of $\Sigma$ at $q_1$
and let $e_4=(0,0,0,1)\in \mathbb{R}^4$. 
Since the set of regular placements of $G$ in $\Sigma$ is open in $\Sigma^V$ we may assume that the vectors $q_1-q_2$,  $q_1-q_3$, $n_1$ and $e_4$ are linearly independent in $\mathbb{R}^4$.

Define $q'_v=q_v$ for all $v\in V$ and $q'_{v_0} =q_{v_1}$. Then $q'=(q'_v)_{v\in V'}$ is a non-regular placement of $G'$ in $\Sigma$. Again for convenience we write,
$$q'=(q'_{v_0},q'_{v_1},q'_{v_2},\ldots, q'_{v_n})=(q_1,q_1,q_2,\ldots, q_n).$$
For each $k\in\mathbb{N}$, let $B_k$ denote the open ball in $\mathbb{R}^4$ with centre $0$ and radius $\frac{1}{k}$ and consider the following subset of $\mathbb{R}^{4(n+1)}$,
$$U_k = H_k\times \overbrace{B_k\times \cdots \times B_k}^n.$$
Let $N_k = (q'+U_k)\cap \Sigma^{V'}$ and note that $N_k$ is a non-empty open subset of $\Sigma^{V'}$. 
Since the set of regular placements of $G'$ in $\Sigma$ is dense in $\Sigma^{V'}$, for each $k\in\mathbb{N}$ there exists a regular placement  $q^k$ of $G'$ in $\Sigma$ such that $q^k\in N_k$. 
Moreover, by applying an isometry of $\Sigma$ to the components of $q^k$ we may assume that $q^k_{v_1}=q_{v_1}$ for each $k\in\mathbb{N}$. 
For convenience we  write, 
$$q^k=(q^k_{v_0},q^k_{v_1},\ldots, q^k_{v_n})=(q^k_0,q_1,q^k_2,\ldots, q^k_n).$$

Note that the sequence $(q^k)$ of regular placements of $G'$ in $\Sigma$ converges to the non-regular placement $q'$.
Also note that for each $k\in\mathbb{N}$, we have $q^k-q'\in U_k$.
In particular, $q^k_0-q_1\in H_k$ and so, by Lemma \ref{lem_Hk}, 
$$\left\Vert\frac{q_0^k-q_1}{\|q_0^k-q_1\|} -e_4\right\Vert < \frac{2\sqrt{2}}{k}.$$
It follows that the sequence of unit vectors $\frac{q_0^k-q_1}{\|q_0^k-q_1\|}$ converges to $e_4=(0,0,0,1)\in\mathbb{R}^4$.

For each $k\in\mathbb{N}$, the framework $(G',q^k)$ is infinitesimally flexible in $\Sigma$ and so there exists a unit vector $m^k=(m_0^k,m_1^k,\ldots, m_n^k)\in (\mathbb{R}^4)^{V'}$ which is a non-trivial infinitesimal flex of $(G',q^k)$.
We may assume, without loss of generality, that $m^k$ has no trivial flex component, in the sense that $P_{q^k}(m^k)=0$. 
By passing to a subsequence (using the Bolzano-Weierstrass Theorem), we may assume that the sequence $(m^k)$ converges to a unit norm vector $m'=(m_0,m_1,\ldots, m_n)\in (\mathbb{R}^4)^{V'}$. 
Note that for each edge $v_iv_j$ in $G'$, we have,
$$m_i\cdot (q'_{v_i}-q'_{v_j})= \lim_{k\to \infty} m_i^k\cdot(q_{v_i}^k-q_{v_j}^k)= 
     \lim_{k\to \infty} m_j^k\cdot(q_{v_i}^k-q_{v_j}^k)=m_j\cdot(q'_{v_i}-q'_{v_j}),$$
and for each vertex $v_i$ in $G'$ we have,
$$\pi(m_i)\cdot\pi(q'_{v_i}) = \lim_{k\to \infty} \pi(m_i^k)\cdot\pi(q_{v_i}^k)=0.$$
Moreover, by Lemma \ref{l:proj},
       $$P_{q'}(m') = \lim_{k\to\infty} P_{q^k}(m^k)=0.$$  Thus, $m'$ is a non-trivial infinitesimal flex of $(G',q')$.

We claim that $m_0=m_1$. To see this, note that since $m'$ is an  infinitesimal flex of $(G',q')$ we have, for $i=2,3$,
$$m_1\cdot (q_1-q_i)=m_1\cdot (q'_{v_1}-q'_{v_i})=m_i\cdot (q'_{v_1}-q'_{v_i})=m_i\cdot(q_1-q_i),$$
     $$m_0\cdot (q_1-q_i)=m_0\cdot (q'_{v_0}-q'_{v_i})=m_i\cdot (q'_{v_0}-q'_{v_i})=m_i\cdot(q_1-q_i).$$
     Thus,  $(m_0-m_1)\cdot (q_1-q_i)=0$ for $i=2,3$.
     We also have,
      $$(m_0-m_1)\cdot n_1 = \pi(m_0)\cdot \pi(q'_{v_0})-\pi(m_1)\cdot \pi(q'_{v_1})=0,$$
      and since $m^k$ is an infinitesimal flex of $(G',q^k)$, 
      $$(m_0-m_1)\cdot e_4 = 
      \lim_{k\to \infty}\, (m_0^k-m_1^k)\cdot \frac{q_0^k-q_1}{\|q_0^k-q_1\|}=0.$$
      Thus, $m_0-m_1$ is orthogonal to the four linearly independent vectors $q_1-q_2$,  $q_1-q_3$, $n_1$ and $e_4$ and hence $m_0=m_1$.
      
      It now follows that the vector $m=(m_1,m_2,\ldots, m_n)$ is a non-trivial infinitesimal flex of $(G,q)$. 
      We conclude that $G$ is  flexible in $\Sigma$.
\end{proof}

\begin{thm}
\label{t:hypercylinder}
    Let \( G \) be the graph of a doubly braced triangulation. 
		Then \( G  \) is (minimally) rigid in the hypercylinder $\Sigma$.
\end{thm}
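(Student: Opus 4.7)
The plan is to combine the inductive construction of Theorem \ref{thm_splitting_version} with the base-case rigidity Lemmas \ref{lem_seven_rigid} and \ref{lem_K6minusedge}, and with the vertex-splitting preservation result (Proposition \ref{prop_split_rigid}). The first step is to observe that a topological vertex splitting move, which uncontracts an edge $e$ of a triangulation and thereby distinguishes the two ``pivot'' vertices coming from the two triangular faces of $P$ containing $e$, is a special case of a $3$-dimensional vertex splitting move in the graph-theoretic sense defined just after Theorem \ref{thm_splitting_version}. Consequently Proposition \ref{prop_split_rigid} applies directly to every move in the construction sequence: if the current braced triangulation is rigid in $\Sigma$, so is the one obtained from it by a topological vertex split.

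Next, by Theorem \ref{thm_splitting_version} every doubly braced triangulation is obtained from one of the five irreducibles by such moves, so it suffices to verify rigidity of these base cases. Inspecting the figures, the three six-vertex irreducibles in Figures \ref{fig_doubly_braced_oct}, \ref{fig_hex_disjoint}, \ref{fig_hex_adjacent} all have underlying graph isomorphic to $K_6 - e$, which is covered by Lemma \ref{lem_K6minusedge}; the two seven-vertex irreducibles in Figures \ref{fig_ireed_seven} and \ref{fig_ireed_seven_non} both have underlying graph isomorphic to $K_5 \cup_{K_3} K_5$, which is covered by Lemma \ref{lem_seven_rigid}. Induction along the construction sequence then yields rigidity of $G$ in $\Sigma$.

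The final step is to upgrade rigidity to \emph{minimal} rigidity by an edge count. A doubly braced triangulation on $n$ vertices has $|E| = 3n - 6 + 2 = 3n - 4$ edges, so the rigidity matrix $R(G,q)$ has exactly $|E| + |V| = 4n - 4$ rows, which matches the rank required by Lemma \ref{lem:hcylrank}. Therefore at a regular placement the rows of $R(G,q)$ must be linearly independent, and removing any edge strictly drops the rank below $4n - 4$, yielding an infinitesimally flexible framework. This is essentially the content of Theorem \ref{thm_hypercyclinder_necessary} run in reverse.

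I do not foresee a substantive obstacle here: the argument is essentially a packaging of the machinery developed earlier in the paper. The only point that might require care is confirming that the base placements exhibited in the proofs of Lemmas \ref{lem_K6minusedge} and \ref{lem_seven_rigid} are full (so that Lemma \ref{lem:hcylrank} applies), but this follows from the observation that the projections $\pi(q_v)$ in those explicit placements span a two-dimensional subspace of $\mathbb{R}^3$; after that, everything else is a routine induction.
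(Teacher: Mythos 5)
Your proposal is correct and follows essentially the same route as the paper: the paper's proof is the one-line remark that the theorem follows from the classification of irreducibles (Theorem \ref{thm_irred_seven}), the base-case computations (Lemmas \ref{lem_seven_rigid}, \ref{lem_K6minusedge}), and the vertex-splitting preservation result (Proposition \ref{prop_split_rigid}). Your extra remarks---that topological vertex splitting is a special case of the graph-theoretic $3$-dimensional vertex split, that the base placements are full, and the edge count giving minimality---are all correct and simply make explicit what the paper leaves implicit (note also that the lemmas and Proposition \ref{prop_split_rigid} already carry the ``minimal'' qualifier parenthetically, so the separate edge-count argument, while valid, is not strictly needed).
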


\begin{proof}
    This follows immediately from Theorem \ref{thm_irred_seven},
    Lemmas \ref{lem_seven_rigid}, \ref{lem_K6minusedge} and Proposition 
    \ref{prop_split_rigid}.
\end{proof}


\section{Application: rigidity for mixed norms on $\mathbb{R}^3$}
\label{sec:mixednorms}

The rigidity theory of bar-joint frameworks in non-Euclidean finite dimensional real normed linear spaces was first considered in \cite{kit-pow}. This and subsequent work has explored special classes of norms, particularly the classical $\ell_p$-norms, polyhedral norms, unitarily invariant matrix norms and product norms (eg.~\cite{dkn,kitson,kitlev,kit-pow,kit-sch}). In this section, we consider a new context provided by a class of {\em mixed norms} on $\mathbb{R}^3$.

\subsection{The normed space $\ell_{2,p}^3$}
For $p\in(1,\infty)$, define the {\em mixed $(2,p)$-norm} 
on \( \mathbb R^3 \) by, 
\[ \|(x,y,z)\|_{2,p} = ((x^2+y^2)^{\frac{p}{2}}+|z|^p)^{\frac{1}{p}}.\]
We denote the normed spaces $(\mathbb{R}^3,\|\cdot\|_{2,p})$ by $\ell_{2,p}^3$. 
Note that the $(2,2)$-norm is the standard Euclidean norm on $\mathbb{R}^3$. Our main interest will be the non-Euclidean $(2,p)$-norms (i.e.~when $p\not=2$). 
The main result in this section states that the graph of a doubly braced triangulation is minimally rigid in $\ell_{2,p}^3$ for all $p\in(1,\infty)$, $p\not=2$ (Theorem \ref{t:norms}).

\begin{rem}
We have excluded the extreme case where $p=1$ as our geometric techniques are not applicable in that setting. In particular, the $(2,1)$-norm is neither smooth nor strictly convex (note that the unit sphere is a double cone). For similar reasons we will not consider the $(2,\infty)$-norm,
\[ \|(x,y,z)\|_{2,\infty} = \max \{(x^2+y^2)^{\frac{1}{2}},|z|\}.\]
(Note that in this case the unit sphere is cylindrical.)
Rigidity theory for the $(2,\infty)$-norm is developed in \cite[\S5.1]{kitlev} using different techniques. 
\end{rem}

Our first goal is to collect some preliminary geometric results which will be required later on.

\begin{lem}
\label{l:dual}
Let $p\in(1,\infty)$. 
Then the dual space of $\ell_{2,p}^3$ is $\ell_{2,q}^3$, where $q$ satisfies $\frac{1}{p}+\frac{1}{q}=1$. 
\end{lem}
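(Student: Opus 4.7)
The plan is to identify the continuous linear functionals on $\ell_{2,p}^3$ with vectors in $\mathbb{R}^3$ via the pairing $\phi_{(a,b,c)}(x,y,z)=ax+by+cz$, and then compute the dual norm of $\phi_{(a,b,c)}$ directly. Concretely, one needs to evaluate
\[ \|\phi_{(a,b,c)}\|_* = \sup\{|ax+by+cz| : \|(x,y,z)\|_{2,p}\leq 1\} \]
and show this equals $\|(a,b,c)\|_{2,q}$.

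I would handle the supremum in two stages. Setting $\rho=(x^2+y^2)^{1/2}$ and $\sigma=|z|$, the constraint on $(x,y,z)$ becomes $\rho^p+\sigma^p\leq 1$ with $\rho,\sigma\geq 0$. For fixed $\rho$, Cauchy-Schwarz gives that the maximum of $|ax+by|$ over $x^2+y^2=\rho^2$ is exactly $(a^2+b^2)^{1/2}\rho$; similarly the maximum of $|cz|$ over $|z|=\sigma$ is $|c|\sigma$. So the supremum reduces to
\[ \sup\big\{(a^2+b^2)^{1/2}\rho+|c|\sigma : \rho,\sigma\geq 0,\ \rho^p+\sigma^p\leq 1\big\}. \]

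The key estimate is now H\"older's inequality applied to the two-term sequences $\big((a^2+b^2)^{1/2},|c|\big)$ and $(\rho,\sigma)$ with conjugate exponents $q$ and $p$:
\[ (a^2+b^2)^{1/2}\rho+|c|\sigma \leq \big((a^2+b^2)^{q/2}+|c|^q\big)^{1/q}(\rho^p+\sigma^p)^{1/p} \leq \|(a,b,c)\|_{2,q}. \]
The upper bound is attained by the standard H\"older extremisers: choose $\rho,\sigma\geq 0$ proportional to $(a^2+b^2)^{q/(2p)}$ and $|c|^{q/p}$ so that $\rho^p+\sigma^p=1$, then align $(x,y)$ with $(a,b)$ and take $z=\sgn(c)\sigma$. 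Putting the two stages together yields $\|\phi_{(a,b,c)}\|_*=\|(a,b,c)\|_{2,q}$, which is the desired isometric identification of the dual space.

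The proof is essentially routine and the main (minor) obstacle is bookkeeping: cleanly separating the Euclidean factor on the first two coordinates from the $\ell_p$-factor on the third, and applying the right inequality (Cauchy-Schwarz in the plane, H\"older in the two remaining scalar variables). From a structural point of view the statement is the special case of the standard duality $(X\oplus_p Y)^*\cong X^*\oplus_q Y^*$ applied to the decomposition $\ell_{2,p}^3=\ell_2^2\oplus_p\mathbb{R}$, using that $\ell_2^2$ and $\mathbb{R}$ are self-dual; mentioning this framing at the outset may shorten the exposition.
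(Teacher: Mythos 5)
Your proof is correct and uses the same underlying inequality chain as the paper (Cauchy--Schwarz on the planar factor, then H\"older with exponents $q,p$ on the pair). The only difference is the attainment step: the paper rotates $y$ about the $z$-axis so its second coordinate vanishes and then reads off the $\ell_p$--$\ell_q$ extremiser $x_k^\theta=|y_k^\theta|^{q-1}\sgn(y_k^\theta)$, whereas you work directly in the radial variables $\rho,\sigma$ and exhibit the aligned extremiser explicitly. This is a cosmetic difference — both just encode alignment plus the H\"older proportionality condition — and your version is, if anything, slightly more transparent. Your closing remark that the result is the instance $(\ell_2^2\oplus_p\mathbb{R})^*\cong\ell_2^2\oplus_q\mathbb{R}$ of the general $(X\oplus_p Y)^*\cong X^*\oplus_q Y^*$ duality is also valid and a reasonable way to frame the computation, though it is not used by the paper.
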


\proof
Given $y\in\ell_{2,q}^3$, define $f_y: \ell_{2,p}^3\rightarrow \mathbb{R}$, $x\mapsto x\cdot y$. Note that for every $x\in\ell_{2,p}^3$,  the Cauchy-Schwarz and H{\"o}lder inequalities imply that
\[ |f_y(x)|\leq \sum_{i=1}^3 |x_iy_i|\leq \left\|\begin{bmatrix}x_1 \\x_2 \end{bmatrix}\right\|_2\,\left\|\begin{bmatrix}y_1 \\y_2 \end{bmatrix}\right\|_2 + |x_3y_3|\leq \left\| \begin{bmatrix}x_1 \\x_2 \\x_3\end{bmatrix}\right\|_{2,p} \left\| \begin{bmatrix}y_1 \\y_2 \\y_3\end{bmatrix}\right\|_{2,q}. \]
 Hence it suffices to show that the contraction, 
\[ T: \ell_{2,q}^3 \rightarrow (\ell_{2,p}^3)^\ast,\quad y\mapsto f_y,\]
is an isometry. 
Let $y\in \ell_{2,q}^3$ be non-zero. There exists $\theta\in [0,2\pi)$ such that, 
\[  y^\theta= R_\theta y= \begin{bmatrix}y^\theta_1 \\0 \\ y^\theta_3\end{bmatrix},\]
where $R_\theta$ is the isometry given by clockwise rotation by $\theta$ about the $z$-axis. Choose $x_\theta$, such that $x_k^\theta=|y_k^\theta|^{q-1} \sgn(y_k^\theta)$. 
Note that $f_y(R_{-\theta}x^\theta) = y\cdot R_{-\theta}x^{\theta} = R_\theta y\cdot x^\theta = f_{y^\theta}(x^\theta)$. Also, 
\[ \|x^\theta\|_{2,p}=\left(\sum_{k=1}^3 |y_k^\theta|^q\right)^{1/p}.\]
Hence we have,
\[\|f_y\|_{2,p}^*\geq \frac{f_y(R_{-\theta}x^\theta)}{\|R_{-\theta}x^\theta\|_{2,p}}= \frac{f_{y^\theta}(x^\theta)}{\|x^\theta\|_{2,p}}=\frac{\sum_{k=1}^3 |y_k^\theta|^q}{\left(\sum_{i=1}^3 |y_k^\theta|^q\right)^{1/p}}=
\left(\sum_{k=1}^3 |y_k^\theta|^q\right)^{1/q}
=\|y_\theta\|_{2,q}=\|y\|_{2,q},\]
and so $\|f_y\|_{2,p}^* = \|y\|_{2,q}$.
\endproof

\begin{lem}\label{lem:smooth}
The space $\ell_{2,p}^3$ is smooth and strictly convex for every $p\in (1,\infty)$.
\end{lem}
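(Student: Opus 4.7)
The plan is to establish strict convexity directly, and then to deduce smoothness from strict convexity via duality. I will use the standard fact that a finite-dimensional (equivalently, reflexive) normed space is smooth if and only if its dual space is strictly convex.

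The structural observation driving the strict convexity argument is the factorisation
\[\|(x,y,z)\|_{2,p}=\bigl\|\bigl(\|(x,y)\|_{2},\,|z|\bigr)\bigr\|_{p},\]
where on the right-hand side the usual $\ell_{p}$-norm on $\mathbb{R}^{2}$ is applied to the pair formed by the Euclidean norm of the first two coordinates and the absolute value of the third. Both the Euclidean norm on $\mathbb{R}^{2}$ and the $\ell_{p}$-norm on $\mathbb{R}^{2}$ are strictly convex for $p\in(1,\infty)$, and I will use both facts.

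Given two distinct unit vectors $u_{i}=(x_{i},y_{i},z_{i})$ in $\ell_{2,p}^{3}$, set $a_{i}=\sqrt{x_{i}^{2}+y_{i}^{2}}$ and $b_{i}=|z_{i}|$, so each $(a_{i},b_{i})$ lies on the unit sphere of $\ell_{p}^{2}$. The triangle inequality for the Euclidean norm together with that for the absolute value shows that the midpoint $v=\tfrac{1}{2}(u_{1}+u_{2})$ satisfies
\[\|v\|_{2,p}^{p}\leq\bigl(\tfrac{a_{1}+a_{2}}{2}\bigr)^{p}+\bigl(\tfrac{b_{1}+b_{2}}{2}\bigr)^{p}.\]
If $(a_{1},b_{1})\neq(a_{2},b_{2})$, strict convexity of $\ell_{p}^{2}$ forces the right-hand side to be strictly less than $1$. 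Otherwise $(a_{1},b_{1})=(a_{2},b_{2})=(a,b)$ with $a^{p}+b^{p}=1$; since $u_{1}\neq u_{2}$, at least one of the two scalar triangle inequalities used above must itself be strict --- either $(x_{1},y_{1})\neq(x_{2},y_{2})$ with $a>0$, in which case strict convexity of the Euclidean norm gives $\sqrt{x^{2}+y^{2}}<a$, or $z_{1}=-z_{2}\neq 0$, in which case $|z|=0<b$. A brief check of the degenerate configurations $a=0$ and $b=0$ (in which one coordinate block of both unit vectors is forced to be zero and the other must then distinguish them) completes the argument, giving $\|v\|_{2,p}<1$.

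For smoothness, by Lemma \ref{l:dual} the dual of $\ell_{2,p}^{3}$ is $\ell_{2,q}^{3}$ with $q\in(1,\infty)$, and the argument above applies verbatim to show that $\ell_{2,q}^{3}$ is strictly convex. Duality in finite dimensions then yields smoothness of $\ell_{2,p}^{3}$. The main obstacle is purely bookkeeping, namely the need to treat the degenerate cases $a=0$ or $b=0$ in the strict convexity argument, where the $\ell_{p}^{2}$ step becomes an equality and one must exploit the surviving coordinate block to produce the required strict inequality; no ideas beyond strict convexity of the Euclidean norm and of the scalar function $t\mapsto t^{p}$ on $[0,\infty)$ are needed.
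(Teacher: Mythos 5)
Your proof is correct, but it takes the mirror image of the paper's route. The paper proves \emph{smoothness} directly --- by writing the norm as $h\circ(f+g)$ with $f(t)=((x+ta)^2+(y+tb)^2)^{p/2}$, $g(t)=|z+tc|^p$, $h(t)=t^{1/p}$, checking one-sided differentiability at $0$ (citing \cite[Lemma 1]{kit-sch}), and then deducing strict convexity from Lemma~\ref{l:dual} via the duality fact that a reflexive space that is the dual of a smooth space is strictly convex. You instead prove \emph{strict convexity} directly, via the composite-norm factorisation $\|(x,y,z)\|_{2,p}=\|(\|(x,y)\|_2,|z|)\|_p$ and a case analysis on whether $(a_1,b_1)=(a_2,b_2)$, and then invoke the same Lemma~\ref{l:dual} plus finite-dimensional duality to get smoothness. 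Your case analysis is sound: when $(a_1,b_1)\neq(a_2,b_2)$ strict convexity of $\ell_p^2$ does the work; when they coincide, either $(x_1,y_1)\neq(x_2,y_2)$ forces $a>0$ and strict convexity of $\ell_2^2$ gives a strict drop in the first block, or $z_1=-z_2\neq0$ forces $b>0$ and the $z$-block drops to zero. The tradeoff: your argument is more elementary and self-contained (it only needs strict convexity of $\ell_2^2$ and of $\ell_p^2$, and the monotonicity of $t\mapsto t^p$), whereas the paper's argument offloads the direct work onto a differentiability computation and an external lemma; both ultimately lean on Lemma~\ref{l:dual} and the smooth/strictly-convex duality, just in opposite directions. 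Either version would serve the paper equally well.
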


\proof 
By \cite[Lemma 1]{kit-sch}, it suffices to show that for all non-zero $(x,y,z)$ and $(a,b,c)$ in $\ell_{2,p}^3$, the function
\[\zeta:\mathbb{R}\to \mathbb{R},\quad t\mapsto \|(x,y,z)+t(a,b,c)\|_{2,p}\]
is differentiable at zero. Note that,
\[\zeta=h\circ\left( f+g\right),\]
where $f(t)=\left((x+ta)^2+(y+tb)^2\right)^{\frac{p}{2}}$, $g(t)=|z+tc|^p$ and $h(t)=t^\frac{1}{p}\,(t>0)$. 
Applying the chain rule, it suffices to show that $f,g$ are differentiable at $0$. We shall prove it only for $f$, since the same arguments will work for $g$. When $(x,y)\neq(0,0)$, then $(x+ta)^2+(y+tb)^2>0$ for $t$ sufficiently close to zero, and hence $f$ is differentiable. So it remains to check the case for $(x,y)=0$. Then 
\[ \left|\frac{\left((ta)^2+(tb)^2\right)^{p/2}}{t}\right|=  |t|^{p-1}(a^2+b^2)^{p/2}.\] 
Since \[\lim_{t\to 0} |t|^{p-1}(a^2+b^2)^{p/2}= 0,\]
it follows that $f$ is differentiable with $f'(0)=0$.

By Lemma \ref{l:dual}, for each $p\in (1,\infty)$ the space $\ell_{2,p}^3$ is reflexive and the dual of a smooth space. Thus $\ell_{2,p}^3$ is also strictly convex (see \cite[p.~184]{beau} eg.).
\endproof

\subsection{Isometries of $\ell_{2,p}^3$}
Next, we determine the identity component $\operatorname{Isom}_0(\ell_{2,p}^3)$ of the isometry group $\operatorname{Isom}(\ell_{2,p}^3)$ for  $p\neq 2$. It is known, by the Mazur-Ulam Theorem (\cite[Theorem 3.1.2]{thompson}), that  for every isometry $\phi$ on a real finite dimensional normed space $X$, there exists a linear isometry $T_\phi$ on $X$ and $t_\phi\in X$, such that  
\[\phi(x)=T_\phi x+t_\phi , \text{ for all }x\in X.\]
Moreover, the map $\phi \mapsto T_\phi$ is a group homomorphism with kernel equal to the group of translations on $X$.
 Hence we can focus on linear isometries. To do this, we recall John's theorem regarding the L{\"o}wner-John ellipsoid, that is the ellipsoid of maximal volume, inside the unit ball of $X$ (see \cite{joh} and \cite[Theorem 3.3.1]{thompson}). 

\begin{thm}[John]\label{thm:John}
Each convex body $K$ in $\mathbb{R}^n$ contains a unique ellipsoid of maximal volume, called the inner L{\"o}wner-John ellipsoid. This ellipsoid is equal to the Euclidean unit ball $B_2^n$ if and only if $B_2^n$ is contained in $K$ and there exist unit vectors $u_i \in \partial K$ and positive numbers $c_i$, $i=1,\dots,m$, such that:

\begin{enumerate}[(i)]
	\item $\sum\limits_{i=1}^{m} c_i u_i=0$;
	\item  $\sum\limits_{i=1}^{m} c_i \langle  x,u_i\rangle^2=\|x\|_2^2$ for all $x\in \mathbb{R}^n$.
\end{enumerate}
\end{thm}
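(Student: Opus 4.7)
For existence, I would parametrise ellipsoids in $\mathbb{R}^n$ as $E_{A,b}=AB_2^n+b$ with $A\in\operatorname{Sym}^+(n)$ and $b\in\mathbb{R}^n$ (absorbing the orthogonal polar factor of a general affine map into the rotational invariance of $B_2^n$), so that $\operatorname{vol}(E_{A,b})=\det(A)\,\operatorname{vol}(B_2^n)$. The feasible set $\{(A,b):E_{A,b}\subseteq K\}$ is closed, and boundedness of $K$ forces $\det A$ to be uniformly bounded above, so the continuous objective $\det A$ attains its maximum by compactness. For uniqueness, if $(A_0,b_0)$ and $(A_1,b_1)$ both realise the maximum, convexity of $K$ gives $\tfrac{1}{2}E_{A_0,b_0}+\tfrac{1}{2}E_{A_1,b_1}\subseteq K$, and this Minkowski average contains the midpoint ellipsoid $E_{A_{1/2},b_{1/2}}$. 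Strict concavity of $A\mapsto(\det A)^{1/n}$ on $\operatorname{Sym}^+(n)$ then forces $A_0=A_1$, and a second-order perturbation argument (slightly dilating $A$ while translating) rules out $b_0\neq b_1$.

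For the necessity direction of the characterisation, assume $B_2^n$ is optimal and let $C=\partial B_2^n\cap\partial K$. Since $B_2^n\subseteq K$ and they share boundary points, the outward normal to $K$ at any $u\in C$ is $u$ itself, so $h_K(u)=1$ on $C$. The key claim is that there is no pair $(S,v)\in\operatorname{Sym}(n)\times\mathbb{R}^n$ with $\operatorname{tr}(S)>0$ satisfying $\langle Su,u\rangle+\langle v,u\rangle\leq 0$ for every $u\in C$: otherwise a convex-programming argument shows that, for small $t>0$, the perturbed ellipsoid $\{(I+tS)x+tv:x\in B_2^n\}$ lies in $K$ (using continuity of $h_K-h_{B_2^n}$ on $S^{n-1}$ together with a small inward correction to absorb $O(t^2)$ slack) while its volume $\det(I+tS)=1+t\operatorname{tr}(S)+O(t^2)$ exceeds $\operatorname{vol}(B_2^n)$. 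Applying convex separation in $\operatorname{Sym}(n)\times\mathbb{R}^n$ to this non-existence, together with Caratheodory's theorem, produces finitely many $u_1,\ldots,u_m\in C$ and weights $c_i>0$ with $\sum_i c_i(u_iu_i^T,u_i)=(I,0)$, which is exactly conditions (i) and (ii).

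Conversely, assume the decomposition $(u_i,c_i)$ holds together with $B_2^n\subseteq K$, and take any $E_{A,b}\subseteq K$ with $A$ symmetric positive definite. The support-function bound at each contact point gives $\|Au_i\|+\langle b,u_i\rangle\leq h_K(u_i)=1$. Combining (ii) to write $\operatorname{tr}(A)=\sum_i c_i\langle Au_i,u_i\rangle$, Cauchy-Schwarz $\langle Au_i,u_i\rangle\leq\|Au_i\|$, and (i) (using $\sum_i c_i=\operatorname{tr}(I)=n$) yields
\begin{equation*}
\operatorname{tr}(A)\leq\sum_i c_i\|Au_i\|\leq\sum_i c_i\bigl(1-\langle b,u_i\rangle\bigr)=n-\bigl\langle b,\textstyle\sum_i c_iu_i\bigr\rangle=n.
\end{equation*}
AM--GM on the eigenvalues of $A$ then gives $\det A\leq(\operatorname{tr}(A)/n)^n\leq 1$, so $\operatorname{vol}(E_{A,b})\leq\operatorname{vol}(B_2^n)$, confirming optimality of $B_2^n$.

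The most delicate step is expected to be the separation argument converting the non-existence of a volume-increasing infinitesimal perturbation into the explicit conic representation $(I,0)=\sum_i c_i(u_iu_i^T,u_i)$ with strictly positive weights: this requires a careful treatment of the convex cone generated by $\{(uu^T,u):u\in C\}$ in $\operatorname{Sym}(n)\times\mathbb{R}^n$, an application of Caratheodory to reduce to finitely many generators, and a passage from linearised feasibility to genuine feasibility of the perturbed ellipsoid in the presence of non-strict contact constraints. The corresponding second-order step pinning down the centre $b$ in the uniqueness argument is similarly subtle.
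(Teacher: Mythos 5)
The paper does not prove Theorem~\ref{thm:John}: it is recalled as a classical result with pointers to \cite{joh} and to \cite[Theorem~3.3.1]{thompson}, so there is no in-paper proof to compare against. I therefore assess your sketch on its own.

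Your outline is correct and follows the standard modern treatment (as in Ball's survey and Thompson's book). The sufficiency direction is essentially complete: from $\sum_i c_i(u_iu_i^T,u_i)=(I,0)$ you get $\sum_i c_i = n$ by taking traces, and then for any competitor $E_{A,b}\subseteq K$ with $A$ symmetric positive definite,
\begin{equation*}
\operatorname{tr}(A)=\sum_i c_i\langle Au_i,u_i\rangle\le\sum_i c_i\|Au_i\|\le\sum_i c_i\bigl(1-\langle b,u_i\rangle\bigr)=n,
\end{equation*}
so AM--GM gives $\det A\le 1$. The one implicit step worth making explicit is $h_K(u_i)=1$: if $\{z:\langle z,w\rangle=c\}$ with $\|w\|=1$ supports $K$ at $u_i$, then $w\in B_2^n\subseteq K$ gives $1\le c$, while $\langle u_i,w\rangle=c$ together with $\|u_i\|=\|w\|=1$ and Cauchy--Schwarz forces $c=1$ and $w=u_i$; this is exactly the ``outward normal is $u$ itself'' observation you use in the necessity direction. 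For necessity, the two delicate points you flag are indeed the ones requiring care: closedness of the cone generated by $\{(uu^T,u):u\in C\}$ (which holds because $C\subseteq S^{n-1}$ is compact and $0$ is not a generator, so separation plus Carath\'eodory applies), and the passage from first-order feasibility of the perturbation $(I+tS)x+tv$ to genuine containment in $K$, absorbing the $O(t^2)$ error at tight contact points by a uniform $1-O(t^2)$ contraction while keeping the $O(t)$ volume gain. Your uniqueness argument via strict concavity of $(\det)^{1/n}$ on $\operatorname{Sym}^+(n)$ together with a second-order translate-and-dilate perturbation to pin down the centre is likewise the standard one. In short, the sketch is sound; a full write-up would need to supply the separation/Carath\'eodory and second-order details in the places you have already identified.
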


\begin{cor}\label{isomxsubeu}
Let $X$ be a normed space that is associated with an inner L{\"o}wner-John ellipsoid $E$. Then the group $\operatorname{Isom}(X)$ of isometries of $X$ is a subgroup of the isometry group of the Euclidean space with unit ball $E$.
\end{cor}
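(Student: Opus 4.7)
The plan is to reduce to linear isometries using the Mazur-Ulam theorem already invoked in the paper, and then exploit the \emph{uniqueness} clause in John's theorem. Since every isometry $\phi\in\operatorname{Isom}(X)$ decomposes as $\phi(x)=T_\phi x+t_\phi$ with $T_\phi$ linear, and since translations $x\mapsto x+t$ are isometries of every normed space on $\mathbb{R}^n$ (including the Euclidean one with unit ball $E$), it suffices to show that every linear isometry $T=T_\phi$ of $X$ is a linear isometry of the inner product space whose unit ball is $E$.

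First I would observe that any linear isometry $T$ of $X$ bijectively maps the unit ball $B_X$ onto itself. Because the group of linear isometries of a finite dimensional normed space is compact (it is a closed bounded subgroup of $GL_n(\mathbb{R})$), the determinant takes values in a compact multiplicative subgroup of $\mathbb{R}^\times$, so $|\det T|=1$; equivalently, $T$ preserves Lebesgue volume. Thus $T(E)$ is an ellipsoid contained in $T(B_X)=B_X$ whose volume equals that of $E$.

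Next I would apply the uniqueness assertion in Theorem \ref{thm:John}: inside the convex body $B_X$ there is a \emph{unique} ellipsoid of maximal volume. Since $T(E)$ is another such maximal-volume ellipsoid inside $B_X$, it must coincide with $E$, i.e.\ $T(E)=E$. But the linear isometries of the Euclidean space with unit ball $E$ are precisely the linear maps that send $E$ to itself, so $T$ lies in that isometry group. Reassembling, $\phi(x)=T_\phi x+t_\phi$ is the composition of a Euclidean linear isometry and a translation, hence a Euclidean isometry.

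I do not expect a serious obstacle here; the only subtle point is justifying $|\det T|=1$, which is needed so that $T(E)$ and $E$ have the same volume (otherwise the uniqueness in John's theorem would not directly force $T(E)=E$). The compactness-of-isometry-group argument, or equivalently the fact that a linear bijection preserving a convex body preserves Lebesgue volume in absolute value, handles this cleanly and is the crux of the argument.
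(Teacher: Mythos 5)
Your proof is correct and follows essentially the same route as the paper's: reduce to a linear isometry $T$, observe that $T$ preserves $B_X$ and volume, then invoke the uniqueness clause of John's theorem to get $T(E)=E$. Your remark about justifying $|\det T|=1$ (via compactness of the linear isometry group, or more directly because a linear bijection mapping a bounded convex body with nonempty interior onto itself must have $|\det T|=1$) usefully fills in a step the paper states without proof.
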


\proof
Given a linear isometry $T$ on $X$, note that $T$ maps $B_X$ to itself. Since $T$ is volume preserving, it follows by the uniqueness of the inner L{\"o}wner-John ellipsoid that $T(E)=E$, so $T$ is also an isometry of the Euclidean space associated with $E$.
\endproof
 
Let $B_{2,p}^3\,$ denote the closed unit ball in $\ell_{2,p}^3$ and let $B_2^3$ denote the closed unit ball in Euclidean space $\mathbb{R}^3$.

\begin{lem}\label{lem:LJell}
Let $p\in(2,\infty)$. Then the inner L{\"o}wner-John ellipsoid for $B_{2,p}^3$ is  $B_2^3$.
\end{lem}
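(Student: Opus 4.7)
The plan is to apply John's Theorem (Theorem \ref{thm:John}) directly, by first verifying the containment $B_2^3 \subseteq B_{2,p}^3$ and then exhibiting an explicit finite family of Euclidean unit vectors on $\partial B_{2,p}^3$ together with positive weights that satisfy conditions (i) and (ii).

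For the containment, I would argue as follows. Given $(x,y,z)$ with $x^2+y^2+z^2 \leq 1$, set $r = \sqrt{x^2+y^2}$, so that $r,|z| \in [0,1]$ and $r^2 + z^2 \leq 1$. Since $p > 2$, the elementary inequalities $r^p \leq r^2$ and $|z|^p \leq z^2$ give $r^p + |z|^p \leq r^2 + z^2 \leq 1$, which is precisely $\|(x,y,z)\|_{2,p} \leq 1$. Thus $B_2^3 \subseteq B_{2,p}^3$, as required by the hypothesis of Theorem \ref{thm:John}.

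For the contact configuration, the natural candidates are the six signed standard basis vectors $\pm e_1, \pm e_2, \pm e_3$, each equipped with the weight $c_i = \tfrac{1}{2}$. A quick check shows $\|\pm e_j\|_{2,p} = 1$ for $j = 1,2,3$, so these are Euclidean unit vectors lying on $\partial B_{2,p}^3$. Condition (i) follows immediately from cancellation, and condition (ii) reduces to the identity $\tfrac{1}{2}\sum_{j=1}^{3}\bigl(x_j^2 + (-x_j)^2\bigr) = x_1^2+x_2^2+x_3^2$, which is exactly $\|x\|_2^2$. John's Theorem then forces $B_2^3$ to be the inner L\"owner--John ellipsoid of $B_{2,p}^3$.

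I do not foresee a serious obstacle; the argument is essentially symmetry plus a one-line elementary inequality. The only point genuinely using the hypothesis $p > 2$ is the containment step: for $p < 2$ the reverse containment holds, so the L\"owner--John ellipsoid of $B_{2,p}^3$ is no longer a Euclidean ball scaled to sit inside it in the same way, and a different (though analogous) analysis would be required.
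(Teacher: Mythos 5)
Your proof is correct and follows essentially the same route as the paper: verify $B_2^3 \subseteq B_{2,p}^3$, then apply John's Theorem with the contact points $\pm e_1, \pm e_2, \pm e_3$ and weights $c_i = \tfrac{1}{2}$. The only difference is that you spell out the containment via the elementary inequality $t^p \leq t^2$ for $t \in [0,1]$ and verify condition (ii) by direct computation, whereas the paper states the containment without proof and cites Parseval's identity for (ii); these are cosmetic differences.
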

\proof
We apply Theorem \ref{thm:John} with $K=B_{2,p}^3$.
Let $\{e_1,e_2,e_3\}$ be the standard orthonormal basis in $\mathbb{R}^3$. Note that for $p>2$, $B_2^3$ is contained in  $B_{2,p}^3$ and each vector $e_i$ lies on $\partial B_{2,p}^3$, $i=1,2,3$.  
Define for $i=1,2,3$ the vectors $u_i=e_i$, $u_{i+3}=-e_i$ and the scalars $c_i=\frac{1}{2}$, $c_{i+3}=c_i$.  Property $(i)$ of Theorem \ref{thm:John} is evident, while property $(ii)$ is satisfied by Parseval's identity. The result follows.
\endproof

Recall that the orientation preserving isometries on the Euclidean space $\mathbb{R}^n$ are of the form $\phi(x)= T_\phi x+t_\phi$ with $T_\phi\in \operatorname{SO}(n)$, meaning that $\operatorname{det}T_\phi=1$. Hence the identity component $\operatorname{Isom}_0(\mathbb{R}^n)$ is generated by translations and rotations.

\begin{prop}\label{prop:orprisom}
Let $p\in (1,\infty)$, $p\not=2$. Then $\operatorname{Isom}_0(\ell_{2,p}^3)$ is the group generated by rotations $R$ about the $z$-axis and translations $T_t$,  $t\in \ell_{2,p}^3$.   \end{prop}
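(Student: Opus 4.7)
The plan is to combine the Mazur--Ulam decomposition with the L\"owner--John characterisation to reduce to a rotation-of-an-orthogonal-matrix problem. Since the excerpt already notes that every isometry $\phi$ of $\ell_{2,p}^3$ splits as $\phi(x)=T_\phi x+t_\phi$ with $T_\phi$ a linear isometry, and since the translation subgroup $\{T_t : t\in\ell_{2,p}^3\}$ is already path-connected and lies in $\operatorname{Isom}_0(\ell_{2,p}^3)$, it suffices to identify the identity component of the group $\mathcal{L}_p$ of linear isometries of $\ell_{2,p}^3$ and then take the semidirect product with translations.

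First I would show $\mathcal{L}_p\subseteq O(3)$. For $p>2$, Lemma \ref{lem:LJell} combined with Corollary \ref{isomxsubeu} gives this immediately. For $p\in(1,2)$ the inner L\"owner--John ellipsoid is no longer $B_2^3$, so I would argue by duality: by Lemma \ref{l:dual} the dual of $\ell_{2,p}^3$ is $\ell_{2,q}^3$ with $q\in(2,\infty)$, and the map $T\mapsto (T^\ast)^{-1}$ is a group isomorphism $\mathcal{L}_p\to\mathcal{L}_q$ (adjoints preserve orthogonality, so this transfers $\mathcal{L}_q\subseteq O(3)$ back to $\mathcal{L}_p\subseteq O(3)$).

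Next I would pin down which $T\in O(3)$ actually preserve $\|\cdot\|_{2,p}$. Any such $T$ preserves both unit balls $B_{2,p}^3$ and $B_2^3$, hence also the intersection of their boundaries
\[ S = \partial B_{2,p}^3\cap \partial B_2^3. \]
Setting $u=x^2+y^2$ and $v=z^2$, the defining conditions become $u+v=1$ and $u^{p/2}+v^{p/2}=1$. Since $t\mapsto t^{p/2}$ is strictly convex on $[0,1]$ for $p>2$ and strictly concave for $p\in(1,2)$, the function $u\mapsto u^{p/2}+(1-u)^{p/2}$ equals $1$ on $[0,1]$ only at the endpoints. Hence
\[ S = \{(x,y,0):x^2+y^2=1\}\cup\{(0,0,\pm 1)\}, \]
the disjoint union of the equator and two poles. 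Restricted to the compact Hausdorff set $S$, the map $T$ is a homeomorphism, and therefore preserves connected components with their topological type: the equator maps to itself, and the poles are either fixed or swapped. In particular $T$ preserves the $xy$-plane and the $z$-axis setwise, so $T=\operatorname{diag}(A,\varepsilon)$ with $A\in O(2)$ and $\varepsilon\in\{\pm 1\}$. Conversely any such block-diagonal matrix preserves $(x^2+y^2)^{p/2}+|z|^p$ and hence lies in $\mathcal{L}_p$.

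Thus $\mathcal{L}_p\cong O(2)\times O(1)$, whose identity component is $\operatorname{SO}(2)\times\{1\}$, i.e.\ precisely the rotations about the $z$-axis. Taking the semidirect product with the path-connected translation group gives the claimed description of $\operatorname{Isom}_0(\ell_{2,p}^3)$. The step I expect to be the main obstacle is the argument for $p\in(1,2)$ in the reduction $\mathcal{L}_p\subseteq O(3)$: Lemma \ref{lem:LJell} is explicitly stated only for $p>2$, and the duality workaround needs a careful check that $T\mapsto(T^\ast)^{-1}$ really is a group homomorphism preserving the orthogonal subgroup (an alternative would be to prove directly that $B_2^3$ is the outer L\"owner--John ellipsoid of $B_{2,p}^3$ for $p<2$, which is doable but requires an analogue of Theorem \ref{thm:John} not stated in the excerpt).
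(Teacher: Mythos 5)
Your proof is correct and follows essentially the same route as the paper's: reduce to linear isometries via Mazur--Ulam, use John's theorem (Lemma \ref{lem:LJell} and Corollary \ref{isomxsubeu}) to embed the linear isometry group in $O(3)$, analyse the intersection $\partial B_2^3\cap\partial B_{2,p}^3$, and dispose of $p\in(1,2)$ by duality. The only small variation is that the paper applies the entire $p>2$ argument to $T^\ast$ in $\ell_{2,q}^3$, whereas you use duality solely to obtain the containment $\mathcal{L}_p\subseteq O(3)$ and then run the sphere-intersection analysis directly in $\ell_{2,p}^3$; you are also slightly more explicit than the paper about identifying $\mathcal{L}_p\cong O(2)\times O(1)$ and passing to its identity component $SO(2)\times\{1\}$ to exclude reflections.
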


\proof
Let $T$ be a linear isometry that lies in $\operatorname{Isom}_0(\ell_{2,p}^3)$. We consider first the case $p> 2$.  It follows by Corollary \ref{isomxsubeu} and Lemma \ref{lem:LJell} that the linear isometries of $\ell_{2,p}^3$ are a subgroup of the group of linear isometries of Euclidean space $\ell_2^3$. Hence $T$ leaves invariant the set 
\[\partial B_2^3\cap \partial B_{2,p}^3=\{(x,y,0):x^2+y^2=1\}\cup\{(0,0,\pm 1)\}.\]
Since $T$ fixes both poles $(0,0,\pm 1)$ it also fixes the $z$-axis.  Thus, $T$ is a rotation operator about the $z$-axis. 

Now suppose $p\in(1,2)$. Note that in this case the dual operator $T^\ast$ is a linear isometry on $\ell_{2,q}^3$, where $\frac{1}{p}+\frac{1}{q}=1$. Moreover, $T^\ast$ lies in the identity component $\operatorname{Isom}_0(\ell_{2,q}^3)$. Since $q=1+\frac{1}{p-1}>2$, the above argument shows that $T^\ast$ is a rotation operator about the $z$-axis. It follows that $T$ is also a rotation operator about the $z$-axis.
\endproof


\subsection{Rigid motions of $\ell_{2,p}^3$}
Let $X$ be a finite dimensional real normed linear space. A {\em rigid motion} of $X$ is a collection $\alpha=\{\alpha_x:[-1,1]\to X\}_{x\in X}$  with the properties that:
\begin{enumerate}[(i)]
\item $\alpha_x$ is a continuous path, for all $x\in X$;
\item $\alpha_x(0)=x$ for any $x\in X$;
\item $\|\alpha_x(t)-\alpha_y(t)\|=\|x-y\|$ for all $x,y\in X$ and all $t\in [-1,1]$. 
 \end{enumerate}

\begin{prop}\label{prop:rigid}
Let $X$ be a finite dimensional real normed linear space and let $\alpha=\{\alpha_x\}_{x\in X}$ be a rigid motion of $X$. For each $t\in [-1,1]$, define
$$\beta_t:X\to X,\quad \beta_t(x) = \alpha_x(t)-\alpha_0(t).$$
Then,
\begin{enumerate}[(i)]
\item $\beta_t\in \Isom_0(X)$ for each $t\in [-1,1]$.
\item The map $\beta: [-1,1]\rightarrow \operatorname{Isom}_0(X)$, $t\mapsto \beta_t$,
is continuous.
\end{enumerate}
\end{prop}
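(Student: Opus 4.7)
The plan is to first establish that each $\beta_t$ is a surjective isometry fixing the origin, so that the Mazur-Ulam theorem forces it to be linear, and then deduce continuity of $t\mapsto\beta_t$ via a basis argument; the membership in the identity component asserted in (i) will then follow formally from $\beta_0=\mathrm{id}$ together with this continuity.

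First I would verify directly from the definition of a rigid motion that $\|\beta_t(x)-\beta_t(y)\|=\|\alpha_x(t)-\alpha_y(t)\|=\|x-y\|$ and $\beta_t(0)=\alpha_0(t)-\alpha_0(t)=0$. To apply the Mazur-Ulam theorem (in the form used in the paragraph preceding Corollary \ref{isomxsubeu}) one still needs $\beta_t$ to be surjective. My plan is to obtain this from Brouwer's invariance of domain: the map $\beta_t$ is a continuous injection of $X$ into itself, and since $X$ is homeomorphic to $\mathbb{R}^{\dim X}$, its image $\beta_t(X)$ is open. On the other hand $\beta_t(X)$ is closed, because any Cauchy sequence in $\beta_t(X)$ is the image under the isometry $\beta_t$ of a Cauchy sequence in the complete space $X$. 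Connectedness of $X$ then forces $\beta_t(X)=X$, and Mazur-Ulam combined with $\beta_t(0)=0$ gives that $\beta_t$ is a linear isometry, so $\beta_t\in\Isom(X)$.

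For (ii), I would fix a basis $e_1,\ldots,e_n$ of $X$ and observe that each coordinate path $t\mapsto\beta_t(e_i)=\alpha_{e_i}(t)-\alpha_0(t)$ is continuous by the continuity hypothesis on the paths of the rigid motion. Since each $\beta_t$ is now known to be linear and $X$ is finite-dimensional, pointwise continuity on a basis is equivalent to continuity of $t\mapsto\beta_t$ in the operator norm, which is the natural topology on $\Isom(X)$. Finally, since $\beta_0=\mathrm{id}$ and $t\mapsto\beta_t$ is a continuous path, the image of the connected interval $[-1,1]$ lies in the path component of the identity, namely $\Isom_0(X)$, which proves (i).

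The main obstacle, as I see it, is securing surjectivity of $\beta_t$: a continuous injective self-map of a finite-dimensional normed space is not automatically surjective without some topological input, and Mazur-Ulam fails for merely isometric embeddings. The cleanest route around this is the invariance-of-domain argument above, though one could alternatively attempt a more hands-on argument using compactness of closed balls together with the nonexpansive structure of $\beta_t$.
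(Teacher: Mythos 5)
Your proof is correct, and it differs from the paper's in two interesting ways. First, you explicitly secure surjectivity of $\beta_t$ before invoking Mazur--Ulam, whereas the paper applies the theorem directly; since the classical Mazur--Ulam theorem is stated for surjective isometries, your invariance-of-domain argument fills in a step the paper leaves implicit. (An alternative closing of this gap: an isometric self-embedding of a compact metric space is automatically onto, so $\beta_t$ maps each closed ball $\overline{B}(0,r)$ onto itself and is therefore surjective — but your route via Brouwer is equally legitimate.) Second, your continuity argument is genuinely different: you exploit the linearity of $\beta_t$ and the finite-dimensionality of $L(X,X)$, reducing operator-norm continuity of $t\mapsto\beta_t$ to continuity of the finitely many coordinate paths $t\mapsto\beta_t(e_i)=\alpha_{e_i}(t)-\alpha_0(t)$, which is immediate from the definition of a rigid motion. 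The paper instead gives a compactness argument, covering the unit ball $B_X$ by finitely many small balls centred at points $x_i$ and using the rigid-motion identity $\|\alpha_x(t)-\alpha_{x_i}(t)\|=\|x-x_i\|$ to promote equicontinuity at the centres to uniform continuity over $B_X$. Your basis argument is shorter and cleaner precisely because it uses linearity, which has already been established; the paper's covering argument is slightly more general in that it would work even without linearity. Both close with the same observation that $\beta_0=\mathrm{id}$ and connectedness of $[-1,1]$ force $\beta_t\in\Isom_0(X)$.
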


\proof
Note that, for each $t\in [-1,1]$, $\beta_t$ is isometric and $\beta_t(0)=0$. It follows, by the Mazur-Ulam Theorem, that $\beta_t$ is a linear isometry. 
Let $t_0\in[-1,1]$ and let $\epsilon>0$. Since the unit ball $B_X$ is compact, we can choose $x_1,x_2,\dots,x_n\in B_X$, such that 
\[B_X\subseteq\bigcup_{i=1}^n \,B\left(x_i,\frac{\epsilon}{4}\right)\quad \text{and}\quad 0\in\{x_i\}_{i=1}^n.\]
Since the paths $\alpha_{x_1},\ldots,\alpha_{x_n}$ are continuous we can choose $\delta>0$ such that for all $t\in[-1,1]$, 
\[|t-t_0|<\delta \quad \implies \quad \max_{1\leq i\leq n}\,\|\alpha_{x_i}(t)-\alpha_{x_i}(t_0)\|<\frac{\epsilon}{4}\]
Let $x\in B_X$. Then there exists $i_0\in\{1,2,\dots,n\}$ such that $\|x-x_{i_0}\|_X\leq\frac{\epsilon}{4}$.
For each $t\in[-1,1]$ we have,
\begin{eqnarray*}
\|\alpha_x(t)-\alpha_x(t_0)\|
&\leq& \|\alpha_x(t)-\alpha_{x_{i_0}}(t)\|+\|\alpha_{x_{i_0}}(t)-\alpha_{x_{i_0}}(t_0)\|+\|\alpha_{x_{i_0}}(t_0)-\alpha_x(t_0)\| \\
&\leq& 2\|x-x_{i_0}\|+\frac{\epsilon}{4}\leq \frac{3\epsilon}{4}
\end{eqnarray*}
Hence for all $t\in (t_0-\delta,t_0+\delta)$ we have
\begin{eqnarray*}
\|\beta_t(x)-\beta_{t_0}(x)\|
&=&\|(\alpha_x(t)-\alpha_0(t))-(\alpha_x(t_0)-\alpha_0(t_0))\| \\ 
&\leq& \|\alpha_x(t)-\alpha_x(t_0)\|+\|\alpha_0(t)-\alpha_0(t_0)\| \\ 
&\leq& \frac{3\epsilon}{4}+\frac{\epsilon}{4}=\epsilon.
\end{eqnarray*}
Since $\delta$ is independent of $x\in B_X$, it follows that for all $t\in[-1,1]$, 
\[|t-t_0|<\delta \quad \implies \quad \|\beta_t-\beta_{t_0}\|_{op}<\epsilon.\]
Thus the map $\beta: [-1,1]\rightarrow \operatorname{Isom}(X)$, $t\mapsto \beta_t$,
is continuous.

Finally, note that $\beta([-1,1])$ is a connected subset of $\Isom(X)$ which contains the identity on $X$. Hence $\beta_t$ lies in $\operatorname{Isom}_0(X)$ for all $t\in [-1,1]$.

\endproof

\begin{cor}\label{cor:rigid}
Let $p\in(1,\infty)$, $p\not=2$.
A collection of maps $\alpha=\{\alpha_x:[-1,1]\to \ell_{2,p}^3\}_{x\in \ell_{2,p}^3}$ is a rigid motion of $\ell_{2,p}^3$  if and only if there exists a continuous map $\theta:[-1,1]\to \mathbb{R}$ which 
satisfies $\theta(0)=0$ such that for each $x=(x_1,x_2,x_3)\in\ell_{2,p}^3$ and $t\in[-1,1]$,
\begin{equation}
\alpha_x(t) = \begin{bmatrix} \cos\theta(t) & -\sin\theta(t) & 0 \\  \sin\theta(t) & \cos\theta(t) & 0\\0 & 0 & 1 \end{bmatrix} \begin{bmatrix} x_1\\x_2\\x_3\end{bmatrix}+\alpha_0(t).
\label{rigidl2p3}
\end{equation}
\end{cor}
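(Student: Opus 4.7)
The plan is to combine the two preceding propositions together with a path-lifting argument for the covering map $\mathbb{R}\to SO(2)$ given by $\theta\mapsto R_\theta$.

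For the forward direction, suppose $\alpha=\{\alpha_x\}_{x\in\ell_{2,p}^3}$ is a rigid motion. First I would apply Proposition \ref{prop:rigid} to obtain the continuous map $\beta:[-1,1]\to\operatorname{Isom}_0(\ell_{2,p}^3)$, $t\mapsto \beta_t$, where $\beta_t(x)=\alpha_x(t)-\alpha_0(t)$. By construction $\beta_t(0)=0$ and, by the Mazur--Ulam Theorem, $\beta_t$ is linear. Since $p\neq 2$, Proposition \ref{prop:orprisom} identifies $\operatorname{Isom}_0(\ell_{2,p}^3)$ as the group generated by translations and rotations about the $z$-axis, so every linear element is a rotation about the $z$-axis. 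Hence for each $t\in[-1,1]$ there exists a real number, well-defined modulo $2\pi$, that serves as a rotation angle for $\beta_t$.

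The main obstacle is showing that these angles can be chosen as a \emph{continuous} function $\theta:[-1,1]\to\mathbb{R}$ with $\theta(0)=0$, rather than merely as a map into $\mathbb{R}/2\pi\mathbb{Z}$. To deal with this, I would exploit the fact that the map
\[
\Phi:\mathbb{R}\to \operatorname{Isom}_0(\ell_{2,p}^3),\qquad \theta\mapsto R_\theta,
\]
where $R_\theta$ denotes rotation by $\theta$ about the $z$-axis, is a continuous group homomorphism whose image is precisely the subgroup of linear isometries fixing the $z$-axis, and which factors as a covering map over $SO(2)$. Since $\beta_0$ is the identity (because $\alpha_x(0)=x$ forces $\beta_0(x)=x$, using $\alpha_0(0)=0$), standard path lifting gives a unique continuous $\theta:[-1,1]\to\mathbb{R}$ with $\theta(0)=0$ and $\beta_t=R_{\theta(t)}$ for all $t$. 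Substituting back into the definition of $\beta_t$ yields equation (\ref{rigidl2p3}).

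For the converse, assume $\alpha$ has the form (\ref{rigidl2p3}) for some continuous $\theta$ with $\theta(0)=0$. Property (ii) of a rigid motion at $x=0$ gives $\alpha_0(0)=0$, hence at $t=0$ the matrix is the identity and $\alpha_x(0)=x$ for all $x$. Each $\alpha_x$ is continuous as a sum and product of continuous functions in $t$. Finally, since $R_{\theta(t)}$ is a linear isometry of $\ell_{2,p}^3$ (it lies in $\operatorname{Isom}_0(\ell_{2,p}^3)$ by Proposition \ref{prop:orprisom}), we have
\[
\|\alpha_x(t)-\alpha_y(t)\|_{2,p}=\|R_{\theta(t)}(x-y)\|_{2,p}=\|x-y\|_{2,p}
\]
for all $x,y$ and $t$, verifying property (iii). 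Thus $\alpha$ is a rigid motion.
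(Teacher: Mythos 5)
Your argument follows essentially the same route as the paper's: apply Proposition \ref{prop:rigid} to extract a continuous path $t\mapsto\beta_t$ in $\Isom_0(\ell_{2,p}^3)$, use Proposition \ref{prop:orprisom} to identify each $\beta_t$ as a $z$-axis rotation, and deduce the desired form. The paper's proof is terse (it simply asserts that continuity of $t\mapsto\theta(t)$ follows from continuity of $\alpha_x$), whereas you spell out the path-lifting argument for the covering $\mathbb{R}\to SO(2)$; that is a welcome expansion of a step the paper leaves implicit, and it is the right way to turn an angle defined only modulo $2\pi$ into a genuine continuous real-valued function.

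One small caution in your converse: you invoke ``Property (ii) of a rigid motion at $x=0$'' to obtain $\alpha_0(0)=0$, but in the converse direction you are trying to \emph{prove} that $\alpha$ is a rigid motion, so you cannot yet appeal to that property. (Similarly, continuity of $\alpha_0$ is needed to argue continuity of each $\alpha_x$ but does not follow from the displayed formula, since at $x=0$ the formula is a tautology.) This is really an imprecision in the statement of the corollary rather than a substantive flaw in your proof: the ``if'' direction implicitly assumes $\alpha_0$ is a continuous path with $\alpha_0(0)=0$, which is also what the paper's ``the converse direction is clear'' glosses over. It would be cleaner to state that assumption explicitly rather than derive it circularly.
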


\proof
Suppose $\alpha$ is a rigid motion of $\ell_{2,p}^3$.
Then equation \eqref{rigidl2p3} follows directly from Propositions \ref{prop:orprisom} and \ref{prop:rigid}.
Moreover, since $\alpha_x(t)$ is continuous on $[-1,1]$
, the same also holds for the map $t\mapsto\theta(t)$. Note also that we can take $\theta(0)=0$.
The converse direction is clear.
\endproof

Let $\alpha=\{\alpha_x\}_{x\in X}$ be a rigid motion of a normed space $X$. 
If each $\alpha_x$ is differentiable at $t=0$ then the map $\eta:X\to X$, $\eta(x)=\alpha_x'(0)$, is called an {\em infinitesimal rigid motion} of $X$. 
The collection of all infinitesimal rigid motions of $X$ is a real vector space, denoted $\mathcal{T}(X)$.

\begin{thm}\label{th:infrmdim}
Let $p\in(1,\infty)$, $p\not=2$, and let $\eta:\ell_{2,p}^3\to \ell_{2,p}^3$ be an affine map.
Then $\eta\in\mathcal{T}(\ell_{2,p}^3)$
if and only if there exists a scalar $\lambda\in\bR$ and a vector $c\in \mathbb{R}^3$ such that,
\[\eta(x_1,x_2,x_3) = \lambda(-x_2,x_1,0)+c,\]
 for all $(x_1,x_2,x_3)\in \mathbb{R}^3$.

In particular, $\dim \mathcal{T}(\ell_{2,p}^3) = 4$.
\end{thm}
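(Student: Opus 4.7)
The plan is to invoke Corollary \ref{cor:rigid}, which provides a complete description of the rigid motions of $\ell_{2,p}^3$, and then pass from rigid motions to infinitesimal rigid motions by differentiating at $t=0$.

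For the forward direction, suppose $\eta\in\mathcal{T}(\ell_{2,p}^3)$ arises from a rigid motion $\alpha=\{\alpha_x\}$ with $\eta(x)=\alpha_x'(0)$ for all $x$. By Corollary \ref{cor:rigid} there exists a continuous $\theta:[-1,1]\to\bR$ with $\theta(0)=0$ such that
\[\alpha_x(t)= A_{\theta(t)}\,x + \alpha_0(t),\]
where $A_\theta$ is the rotation by $\theta$ about the $z$-axis. Since $\alpha_0'(0)$ and $\alpha_{e_1}'(0)$ exist, the coordinate functions $\cos\theta(t)$ and $\sin\theta(t)$ must be differentiable at $t=0$. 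Using $\theta(0)=0$ and the continuity of $\theta$, one inverts $\sin$ locally near $0$ (via $\theta(t)=\arcsin(\sin\theta(t))$ for small $t$) to conclude that $\theta$ itself is differentiable at $t=0$; write $\lambda=\theta'(0)$ and $c=\alpha_0'(0)$. Differentiating the formula above at $t=0$ then yields
\[\eta(x) = \lambda\, A_0'\,x + c = \lambda(-x_2,x_1,0)+c,\]
which is the required form.

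For the converse, given $\lambda\in\bR$ and $c\in\bR^3$, define $\alpha_x(t)=A_{\lambda t}\,x + tc$ and take $\theta(t)=\lambda t$. Corollary \ref{cor:rigid} immediately certifies that $\alpha$ is a rigid motion of $\ell_{2,p}^3$, and $\alpha_x'(0)=\eta(x)$ by direct differentiation, so $\eta\in\mathcal{T}(\ell_{2,p}^3)$. The dimension assertion then follows at once: the assignment $(\lambda,c)\mapsto \eta$ is a linear isomorphism from $\bR\times\bR^3$ onto $\mathcal{T}(\ell_{2,p}^3)$, giving $\dim\mathcal{T}(\ell_{2,p}^3)=4$.

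The main obstacle is the technical step in the forward direction: extracting differentiability of the scalar function $\theta$ at $t=0$ from the differentiability of $\alpha_x$ at a single well-chosen $x$. Apart from that, the argument is essentially a direct application of Corollary \ref{cor:rigid}, and it is crucial that Proposition \ref{prop:orprisom} has already pinned down the identity component of $\Isom(\ell_{2,p}^3)$ when $p\neq 2$ — without this, one would have to account for additional symmetries and the dimension count could change.
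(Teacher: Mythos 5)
Your proof is correct and takes essentially the same route as the paper's: both directions rest on Corollary~\ref{cor:rigid}, with the converse handled by exhibiting the explicit rotation-plus-translation path and the dimension count following from the $(\lambda,c)$ parametrisation. The one place you go beyond the paper is in justifying that $\theta$ is differentiable at $t=0$ (via differentiability of $\cos\theta(t),\sin\theta(t)$ and local inversion through $\arcsin$); the paper simply differentiates the formula, leaving this point implicit, so your version is if anything a touch more careful.
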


\proof
By Corollary \ref{cor:rigid}, if $\eta$ is an infinitesimal rigid motion of $\ell_{2,p}^3$ then there exists $\theta$ such that, for each $x\in \bR^3$, $\eta(x)$ is given by,  
\[ \eta(x) = \frac{d}{dt}\left(\begin{bmatrix} \cos\theta(t) & -\sin\theta(t) & 0 \\
 \sin\theta(t) & \cos\theta(t) & 0\\0 & 0 & 1 \end{bmatrix} \begin{bmatrix} x_1\\x_2\\x_3\end{bmatrix}+\alpha_0(t)\right)\bigg|_{t=0}=
 \theta'(0)\begin{bmatrix} -x_2\\ x_1\\0 \end{bmatrix}+\alpha_0'(0).\]

For the converse, suppose $\eta$ is an affine map of the form 
\[\eta(x_1,x_2,x_3) = \lambda(-x_2,x_1,0)+c,\]
for some scalar $\lambda\in\bR$ and some vector $c\in \mathbb{R}^3$.
Consider the collection of continuous paths,
\[\alpha_x(t) = \begin{bmatrix} \cos(\lambda t) & -\sin(\lambda t) & 0 \\
 \sin(\lambda t) & \cos(\lambda t) & 0\\0 & 0 & 1 \end{bmatrix} \begin{bmatrix} x_1\\x_2\\x_3\end{bmatrix}  + tc.\]
Then $\alpha$ is a rigid motion of $\ell_{2,p}^3$ that satisfies $\eta(x)=\alpha_x'(0)$ for each $x\in \ell_{2,p}^3$.

\endproof

\subsection{Full sets in $\ell_{2,p}^3$} 

Let $X$ be a normed linear space and let $S\subseteq X$ be a non-empty set.
We say that $S$ is {\em isometrically  full} in  $X$ if the only isometry in $\operatorname{Isom}_0(X)$ which fixes every point in $S$ is the identity map.

\begin{lem}\label{lem:fullfullaff}
If $S$ has full affine span in $X$ then $S$ is isometrically full in $X$.
\end{lem}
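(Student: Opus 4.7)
The plan is to invoke the Mazur-Ulam theorem, already cited earlier in the paper: every isometry $\phi$ of a real normed linear space $X$ is an affine map, i.e.\ $\phi(x) = T_\phi x + t_\phi$ for some linear isometry $T_\phi$ and some translation vector $t_\phi \in X$. In particular every element of $\operatorname{Isom}_0(X)$ is affine.

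Given this, suppose $\phi \in \operatorname{Isom}_0(X)$ fixes every point of $S$. Since $S$ has full affine span in $X$, any $x \in X$ can be written as a finite affine combination $x = \sum_i \lambda_i s_i$ with $s_i \in S$ and $\sum_i \lambda_i = 1$. Because $\phi$ is affine it preserves such combinations, so
\[
\phi(x) \;=\; \sum_i \lambda_i \phi(s_i) \;=\; \sum_i \lambda_i s_i \;=\; x.
\]
Hence $\phi$ is the identity on $X$, and $S$ is isometrically full as required.

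There is no substantial obstacle here: the lemma is a direct consequence of Mazur-Ulam combined with the elementary fact that two affine maps which agree on an affine-spanning subset of $X$ must agree on all of $X$. One could alternatively phrase the second step by fixing a basepoint $s_0 \in S$, noting that $\{s - s_0 : s \in S\}$ spans $X$ linearly, and observing that the linear part of $\phi - \mathrm{id}$ vanishes on a spanning set and so is zero.
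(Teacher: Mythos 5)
Your proof is correct and takes essentially the same route as the paper: both invoke Mazur--Ulam to conclude that $\phi$ is affine, and then use the affine spanning hypothesis to force $\phi = \mathrm{id}$. Indeed, your parenthetical reformulation (fix $s_0 \in S$, look at the linear part on the spanning set $\{s - s_0 : s \in S\}$) is precisely the paper's argument, so the two are interchangeable.
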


\proof
Suppose that there exists $\phi\in \operatorname{Isom}_0(X)$ such that $\phi(s)=s$ for every $s\in S$. 
Note that $\phi$ is of the form $\phi(x)=Ax+b$, for some linear operator $A$ and $b\in X$. Fix some element $s_0\in S$. Then the operator $A$  also lies in $\operatorname{Isom}_0(X)$ and it is the identity on the linear span of the set $\{s-s_0:s\in S\}$. Since $S$ has full affine span, it follows that $A$ is the identity. Since $b=\phi(s)-s=0$ we see that $\phi$ is the identity map. 
\endproof

Define the restriction map,
\[\rho_S:\mathcal{T}(X)\rightarrow X^S:\eta\mapsto (\eta(s))_{s\in S}.\]
We say that $S$ is {\em full} in $X$ if $\rho_S$ is injective (see \cite{kitlev}).

\begin{prop}
\label{prop:fullsetsl2p3}
Let $p\in (1,\infty)$, $p\not=2$, and let $S$ be a non-empty subset of $\ell_{2,p}^3$.
The following statements are equivalent.
\begin{enumerate}[(i)]
\item $S$ is full in $\ell_{2,p}^3$.
\item $S$ is isometrically full in $\ell_{2,p}^3$.
\item The orthogonal projection of $S$ onto the $xy$-plane contains at least two points. 
\end{enumerate}
\end{prop}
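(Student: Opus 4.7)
The plan is to establish the two equivalences (i)$\Leftrightarrow$(iii) and (ii)$\Leftrightarrow$(iii) separately; then (i)$\Leftrightarrow$(ii) follows by transitivity. The main tools are the explicit description of $\mathcal{T}(\ell_{2,p}^3)$ from Theorem~\ref{th:infrmdim} and of $\Isom_0(\ell_{2,p}^3)$ from Proposition~\ref{prop:orprisom}. Both descriptions share the key feature that, modulo three translational degrees of freedom, the only remaining freedom is a single rotational degree about the $z$-axis, which is exactly why projection onto the $xy$-plane is the correct obstruction.

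For (iii)$\Rightarrow$(i), suppose $\eta_1,\eta_2\in\mathcal{T}(\ell_{2,p}^3)$ agree on $S$. By Theorem~\ref{th:infrmdim} their difference has the form $\eta(x_1,x_2,x_3) = \lambda(-x_2,x_1,0)+c$. Evaluating at two points of $S$ whose $xy$-projections differ and subtracting, the first two coordinates yield a $2\times 2$ linear system whose only solution is $\lambda = 0$; then $c = 0$ follows from $\eta$ vanishing at any single point of $S$. For the contrapositive of (i)$\Rightarrow$(iii), if every point of $S$ projects to a common $(a,b)$ in the $xy$-plane, then the affine map $\eta(x_1,x_2,x_3) = (-x_2+b,\, x_1-a,\, 0)$ lies in $\mathcal{T}(\ell_{2,p}^3)$ by Theorem~\ref{th:infrmdim} (with $\lambda=1$, $c=(b,-a,0)$), is non-zero as a map on $\ell_{2,p}^3$ (evaluate at $(a+1,b,0)$), and vanishes identically on $S$, exhibiting a non-trivial element of $\ker\rho_S$.

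For (iii)$\Rightarrow$(ii), let $\phi\in\Isom_0(\ell_{2,p}^3)$ fix $S$ pointwise. By Proposition~\ref{prop:orprisom} one may write $\phi(x) = R_\theta x + t$ with $R_\theta$ a rotation about the $z$-axis. Fixing two points $s_1,s_2\in S$ and subtracting gives $(I-R_\theta)(s_1-s_2)=0$; since $\ker(I-R_\theta)$ equals the $z$-axis whenever $\theta\not\equiv 0\pmod{2\pi}$, distinct $xy$-projections force $\theta=0$, hence $R_\theta=I$ and $t=s_1-R_0 s_1=0$, so $\phi$ is the identity. For the contrapositive of (ii)$\Rightarrow$(iii), if $S$ projects to a single point $(a,b)$, then $\phi(x) = R_\theta(x-(a,b,0))+(a,b,0)$ is a composition of a $z$-axis rotation with a translation, hence lies in $\Isom_0(\ell_{2,p}^3)$, is non-trivial for any $\theta\neq 0$, and fixes every point on the vertical line through $(a,b,0)$, in particular every point of $S$.

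I do not foresee any serious obstacle: once Theorem~\ref{th:infrmdim} and Proposition~\ref{prop:orprisom} are in hand, everything reduces to elementary linear algebra with $2\times 2$ rotation matrices on the $xy$-plane. The only subtlety is the degenerate subcase $(a,b)=(0,0)$, where the witnessing infinitesimal flex collapses to $(-x_2,x_1,0)$ and the witnessing isometry becomes a nontrivial rotation about the $z$-axis itself; in both cases the arguments go through without modification.
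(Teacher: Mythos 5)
Your proof is correct and follows essentially the same route as the paper: both equivalences are reduced, via Theorem~\ref{th:infrmdim} and Proposition~\ref{prop:orprisom}, to elementary linear algebra involving rotations about the $z$-axis, with the same witnessing infinitesimal flex $\lambda(-x_2,x_1,0)+(b,-a,0)$ and the same conjugated rotation $T_{(a,b,0)}R_\theta T_{-(a,b,0)}$ in the degenerate case. The only cosmetic difference is that you phrase injectivity of $\rho_S$ via a difference of two elements of $\mathcal{T}(\ell_{2,p}^3)$ rather than directly via a nonzero kernel element, which is an equivalent presentation of the same argument.
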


\proof
Let $P_{xy}$ denote the projection of $\ell_{2,p}^3$ onto the $xy$-plane along the $z$-axis. 

$(i)\Leftrightarrow(iii)$
 Suppose that $P_{xy}(S)=\{s\}$. Say $s=(s_1,s_2,0)$ and define 
\[\eta: \ell_{2,p}^3\rightarrow \ell_{2,p}^3,\quad x\mapsto \begin{bmatrix} -x_2\\ x_1\\0 \end{bmatrix}+
\begin{bmatrix} s_2\\ -s_1\\0 \end{bmatrix}.\]
By Theorem \ref{th:infrmdim}, $\eta$ is an infinitesimal rigid motion of $\ell_{2,p}^3$.
Note that $\rho_S(\eta)=0$ and so $S$ is not full in $\ell_{2,p}^3$.

Let us now assume that there exist  $s, r\in S$ such that $P_{xy}(s)\neq P_{xy}(r)$. If $S$ is not full, then there exists a non-zero $\eta\in  \mathcal{T}(\ell_{2,p}^3)$ that satisfies $\eta(s)=\eta(r)=0$. Write $s=(s_1,s_2,s_3)$ and $r=(r_1,r_2,r_3)$. Then, by Theorem \ref{th:infrmdim}, it follows that $(-s_2,s_1,0)=(-r_2,r_1,0)$. Hence 
$P_{xy}(s)=P_{xy}(r)$, a contradiction.

$(ii)\Leftrightarrow(iii)$
Suppose first that $S$ is isometrically full and that the set $P_{xy}(S)=\{P_{xy}(s):s\in S\}$ is a singleton $\{a\}$. Then for every rotation $R$ about the $z$-axis we have
\[T_{a}RT_{-a}s=T_aR(s-a)=T_a(s-a)=s, \quad \forall\,s\in S,\]
a contradiction.

For the converse, suppose there exists $s_1,s_2\in S$ such that $P_{xy}(s_1)\neq P_{xy}(s_2)$.
Note that, by Proposition \ref{prop:orprisom}, it follows that every isometry in $\operatorname{Isom}_0(\ell_{2,p}^3)$  can be written in the form $T_tR$ for some rotation $R$ about the $z$-axis and some translation $T_t$.
Suppose $T_tR(s)=s$ for each $s\in S$ and let $s_1,s_2\in S$ be such that $P_{xy}(s_1)\neq P_{xy}(s_2)$. 
Then $s_1-s_2 = T_tR(s_1)-T_tR(s_2) = R(s_1-s_2)$ and so,
\[P_{xy}(s_1-s_2) = P_{xy}R(s_1-s_2) = RP_{xy}(s_1-s_2).\]
If $R$ is not the identity map then $P_{xy}(s_1-s_2)=0$, a contradiction.
It follows that $T_tR$ is the identity map and so $S$ is full.
\endproof

\begin{rem}
We expect that in any finite dimensional real normed linear space a subset $S$ is full if and only if it is isometrically full, but we are currently unaware of such a proof.
\end{rem}

\subsection{Frameworks in $\ell_{2,p}^3$}
Let $G=(V,E)$ be a finite simple graph. A {\em (bar-joint) framework} in $X$ is a pair $(G,q)$ where $q=(q_v)_{v\in V}\in X^V$ and $q_v\not=q_w$ whenever $vw\in E$.
A {\em subframework} of $(G,q)$ is a framework $(H, q_H)$ where $H=(V(H), E(H))$ is a subgraph of $G$ and $q_H(v) =q(v)$ for all $v\in V(H)$.

A framework $(G,q)$ is said to be {\em full} in $X$ if the set $S=\{q_v:v\in V\}$ is full in $X$.
A framework $(G,q)$ is {\em completely full} in $X$ if it is full in $X$ and every subframework of $(G,q)$ containing at least $2\dim(X)$ vertices is also full in $X$. 

The {\em rigidity map} for $G$ and $X$ is defined by $f_G:X^V\to\mathbb{R}^E$, $x\mapsto (\|x_v-x_w\|)_{vw\in E}$. An {\em infinitesimal flex} of a bar-joint framework $(G,q)$ in $X$ is a vector $m\in X^V$ such that, for each edge $vw\in E$, the directional derivative of the rigidity map $f_G$ in the direction of $m$ vanishes,
\[\lim_{t\to 0} \frac{1}{t}(f_G(q+tm)-f_G(q))=0.\] 

An infinitesimal flex $m\in X^V$ is said to be {\em trivial} if there exists an infinitesimal rigid motion $\eta\in \mathcal{T}(X)$ such that $m_v=\eta(q_v)$ for all $v\in V$. A bar-joint framework $(G,q)$ in $X$ is {\em infinitesimally rigid} if and only if every infinitesimal flex of $(G,q)$ is trivial.

\begin{lem}
\label{lem:mnflex}
Let $(G,q)$ be a bar-joint framework   in $\ell_{2,p}^3$, where $p\in(1,\infty)$. Then a vector $m\in X^V$ is an infinitesimal flex of $(G,q)$ if and only if for each edge $vw\in E$ we have,
 \begin{eqnarray*}
\begin{cases}
(x,y,\frac{\sgn(z)|z|^{p-1}}{d^{p-2}})\cdot (a,b,c)=0, &\,\,\,\, \mbox{if }d\neq 0, \\ 
c=0, &\,\,\,\,  \textrm{otherwise}.
\end{cases}
\end{eqnarray*}
where $q_v-q_w=(x,y,z)$, $m_v-m_w=(a,b,c)$ and $d=(x^2+y^2)^{\frac{1}{2}}$. 
\end{lem}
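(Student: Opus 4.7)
The plan is to reduce the infinitesimal flex condition on each edge to a one-variable derivative computation, differentiate the $p$-th power of the norm rather than the norm itself, and then separate into the two stated cases.

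Fix an edge $vw\in E$ and set $(x,y,z)=q_v-q_w$, $(a,b,c)=m_v-m_w$. Unpacking the definition of infinitesimal flex and projecting onto the coordinate of $f_G$ indexed by $vw$, the flex condition becomes
\[
\left.\frac{d}{dt}\right|_{t=0}\|(x+ta,y+tb,z+tc)\|_{2,p}=0.
\]
Since $vw$ is an edge we have $(x,y,z)\neq 0$, so by Lemma \ref{lem:smooth} the norm is differentiable at this point, and the derivative above exists.

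Rather than differentiate the norm directly, it is more convenient to work with its $p$-th power
\[
\phi(t)=\big((x+ta)^2+(y+tb)^2\big)^{p/2}+|z+tc|^p.
\]
Since $\phi(0)>0$, the vanishing of $(\phi^{1/p})'(0)$ is equivalent to $\phi'(0)=0$. A direct chain rule calculation, splitting $\phi$ as a sum of the two summands and handling each exactly as in the proof of Lemma \ref{lem:smooth}, yields
\[
\phi'(0)=p\,d^{p-2}(xa+yb)+p\,\sgn(z)|z|^{p-1}c,
\]
where $d=(x^2+y^2)^{1/2}$.

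It remains to interpret this equation in the two cases. If $d\neq 0$, dividing $\phi'(0)=0$ by $p\,d^{p-2}$ rearranges into the stated dot product $(x,y,\sgn(z)|z|^{p-1}/d^{p-2})\cdot(a,b,c)=0$. The main obstacle I expect is the case $d=0$, where the general formula is no longer well defined (the factor $d^{p-2}$ is singular when $p<2$). Here one must argue directly: $x=y=0$ forces the first summand of $\phi$ to reduce to $|t|^p(a^2+b^2)^{p/2}$, which for $p>1$ is differentiable at $0$ with derivative $0$; meanwhile $(x,y,z)\neq 0$ forces $z\neq 0$, so in this case $\phi'(0)=p\,\sgn(z)|z|^{p-1}c$, which vanishes if and only if $c=0$. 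The converse directions in both cases are immediate by reversing the computations.
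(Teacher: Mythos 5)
Your proposal is correct and follows essentially the same route as the paper: both decompose the relevant scalar function into a planar part and a $z$-part (as in Lemma \ref{lem:smooth}), apply the chain rule, and split into the cases $d\neq 0$ and $d=0$. The only difference is that you differentiate the $p$-th power $\phi=\zeta^p$ and use $\phi(0)>0$ to pass back, whereas the paper differentiates $\zeta=h\circ(f+g)$ directly with $h(t)=t^{1/p}$; the computations are equivalent.
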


\proof
For each edge $vw\in E$, consider the function
\[\zeta_{vw}:\mathbb{R}\to \mathbb{R},\quad t\mapsto \|(q_v+tm_v)-(q_w+tm_w)\|_{2,p}.\]
Note that $m$ is an infinitesimal flex for $(G,q)$ if and only if $\zeta_{vw}'(0)=0$ for each edge $vw\in E$. 
As in the proof of Lemma \ref{lem:smooth}, by expressing $\zeta_{vw}$ in the form $\zeta_{vw}=h\circ\left( f+g\right)$ we can show that $\zeta_{vw}$ is differentiable at $0$. 
If $d\neq 0$ then using the chain rule we compute,  
\[\zeta_{vw}'(0)=
(d^p+|z|^p)^{\frac{1}{p}-1}(d^{p-2}(xa+yb)+\sgn(z)|z|^{p-1}c),\]
Rearranging the above we obtain the desired equation. 
If $d=0$, then $z\neq 0$, so we have,
\[\zeta_{vw}'(0)=h'(f(0)+g(0))(f'(0)+g'(0))=\left(|z|^p\right)^{\frac{1-p}{p}}\sgn(z)|z|^{p-1}c=\sgn(z)c.\]
The result follows.
\endproof

\subsection{The rigidity matrix}
We define the {\em rigidity matrix} $R(G,q)$ for a graph $G$ and a vector $q\in\left(\ell_{2,p}^3\right)^V$ to be the $|E|\times 3|V|$ matrix with rows indexed by $E$, columns indexed by $V\times \{1,2,3\}$ and entries defined as follows: Let $vw\in E$. Write $q_v-q_w=(x,y,z)$ and $d=(x^2+y^2)^{\frac{1}{2}}$. When $d\not=0$ then the entries of the row indexed by $vw$ are given by,
\[ \kbordermatrix{
& & & & (v,1) & (v,2) & (v,3) & & & & (w,1) & (w,2) & (w,3) & & & \\
vw 
& 0 & \cdots & 0 
& x 
& y
& \frac{\sgn(z)|z|^{p-1}}{d^{p-2}} 
& 0 & \cdots & 0
& -x 
& -y 
& -\frac{\sgn(z)|z|^{p-1}}{d^{p-2}} 
& 0 & \cdots & 0
}.\]
When $d=0$, then the entries of the row indexed by $vw$ are given by, 
\[ \kbordermatrix{
& & & & (v,1) & (v,2) & (v,3) & & & & (w,1) & (w,2) & (w,3) & & & \\
vw 
& 0 & \cdots & 0 
& 0 
& 0
& z 
& 0 & \cdots & 0
& 0 
& 0 
& -z 
& 0 & \cdots & 0
}.\]

\begin{lem}
\label{l:mixednormrank}
Let $p\in(1,\infty)$, $p\not=2$. A full bar-joint framework $(G,q)$ in $\ell_{2,p}^3$ is infinitesimally rigid if and only if $\rank R(G,q)=3|V|-4$.
\end{lem}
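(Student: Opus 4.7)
The plan is to identify the kernel of $R(G,q)$ with the space of infinitesimal flexes and then use fullness to compute the dimension of the space of trivial flexes. The equivalence in Lemma \ref{lem:mnflex} is exactly the row-by-row reading of $R(G,q)$: for an edge $vw$, the equation it encodes is precisely the directional derivative condition characterizing an infinitesimal flex. Hence $\ker R(G,q)$ is the linear space of infinitesimal flexes of $(G,q)$, and $\rank R(G,q) = 3|V| - \dim\ker R(G,q)$.

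Next I would verify that every trivial infinitesimal flex actually lies in $\ker R(G,q)$. Given $\eta\in\mathcal{T}(\ell_{2,p}^3)$, define $m_v = \eta(q_v)$; then $m$ arises as the $t=0$ derivative of a rigid motion $\alpha$ applied pointwise, and since the distances $\|\alpha_{q_v}(t)-\alpha_{q_w}(t)\|_{2,p}$ are constant in $t$, differentiating yields $\zeta_{vw}'(0)=0$ for each edge $vw\in E$. Thus the space $\mathcal{T}(q) := \{(\eta(q_v))_{v\in V} : \eta\in\mathcal{T}(\ell_{2,p}^3)\}$ is always contained in $\ker R(G,q)$.

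Now I invoke fullness: since $(G,q)$ is full, the restriction map $\rho_S\colon \mathcal{T}(\ell_{2,p}^3)\to X^V$ is injective, and so
\[
\dim \mathcal{T}(q) \;=\; \dim \mathcal{T}(\ell_{2,p}^3) \;=\; 4,
\]
where the last equality is Theorem \ref{th:infrmdim}. By definition, $(G,q)$ is infinitesimally rigid iff every infinitesimal flex is trivial, i.e.\ iff $\ker R(G,q) = \mathcal{T}(q)$. Combined with $\mathcal{T}(q) \subseteq \ker R(G,q)$ and $\dim\mathcal{T}(q)=4$, this is equivalent to $\dim\ker R(G,q) = 4$, which via the rank-nullity identity is equivalent to $\rank R(G,q) = 3|V|-4$.

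I do not anticipate a real obstacle here; all three ingredients (kernel description, trivial flexes lie in the kernel, dimension count via fullness) are essentially packaging of earlier lemmas. The only mildly subtle point is confirming that trivial flexes satisfy the edge equations of Lemma \ref{lem:mnflex} in the degenerate case $d=0$ (when $q_v-q_w$ is vertical); here the flex $m_v - m_w = \eta(q_v)-\eta(q_w)$ has third coordinate zero because, by Theorem \ref{th:infrmdim}, the linear part of $\eta$ acts trivially on the $z$-coordinate, so the condition $c=0$ in Lemma \ref{lem:mnflex} holds automatically.
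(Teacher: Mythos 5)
Your proof is correct and follows essentially the same route as the paper: identify $\ker R(G,q)$ with the space of infinitesimal flexes via Lemma \ref{lem:mnflex}, use fullness together with Theorem \ref{th:infrmdim} to see that the trivial flexes form a $4$-dimensional subspace of that kernel, and finish with rank--nullity. The only difference is that you explicitly verify the (needed but implicit in the paper) inclusion $\mathcal{T}(q)\subseteq\ker R(G,q)$, including the $d=0$ case, which is a reasonable thing to spell out since, unlike the hypercylinder section, this part of the paper has no stated analogue of Lemma \ref{lem:hcyltriv}.
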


\proof
By Lemma \ref{lem:mnflex}, the kernel of $R(G,q)$ is the linear space of infinitesimal flexes of $(G,q)$. Also, $\rank R(G,q)=3|V|-\dim \ker R(G,q)$.
Since $(G,q)$ is full, by Theorem \ref{th:infrmdim} the infinitesimal rigid motions of  $\ell_{2,p}^3$ induce a $4$-dimensional space of trivial infinitesimal flexes on $(G,q)$.
The result now follows.
\endproof

This gives the following analogue of Theorem~\ref{thm_hypercyclinder_necessary}. 

\begin{thm}
    \label{thm_mixnorms_necessary}
    Let $p\in(1,\infty)$, $p\not=2$. Suppose that $G=(V,E)$ has at least six vertices and that $(G,q)$  is an infinitesimally rigid and completely full framework in $\ell_{2,p}^3$. Furthermore, suppose that for any $e\in E$, $(G-e,q)$ is not infinitesimally rigid. Then $G$ is $(3,4)$-tight. 
\end{thm}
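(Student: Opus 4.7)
The plan is to follow essentially the same strategy as the proof of Theorem~\ref{thm_hypercyclinder_necessary}, but with the rank formula supplied by Lemma~\ref{l:mixednormrank}. The argument is actually slightly cleaner in this setting because $R(G,q)$ for $\ell_{2,p}^3$ has only edge rows, with no separate ``tangent/normal'' rows as in the hypercylinder case.

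First, since $(G,q)$ is full and infinitesimally rigid, Lemma~\ref{l:mixednormrank} gives $\rank R(G,q)=3|V|-4$, which already forces $|E|\geq 3|V|-4$. If $|E|>3|V|-4$, then $\rank R(G,q)<|E|$, so there is a nontrivial row dependence $\omega\in\mathbb{R}^E$, and for any edge $e$ with $\omega_e\neq 0$ the removal of $e$ leaves the rank unchanged. Hence $(G-e,q)$ is still infinitesimally rigid, contradicting the edge-minimality hypothesis. This gives $|E|=3|V|-4$.

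Next, for the sparsity condition, suppose $G'=(V',E')$ is a subgraph with at least one edge and $|E'|>3|V'|-4$. A quick case check using simplicity of $G$ shows $|V'|\geq 6$: for $|V'|\leq 5$ one has $\binom{|V'|}{2}\leq 3|V'|-4$, which rules out the inequality. Hence the completely full hypothesis applies to $(G',q_{G'})$, and Lemma~\ref{l:mixednormrank} bounds the rank of its rigidity matrix by $3|V'|-4<|E'|$. The key observation is that the submatrix $M$ of $R(G,q)$ formed by the rows indexed by $E'$ and the columns indexed by $V'\times\{1,2,3\}$ is precisely $R(G',q_{G'})$, because every edge row has nonzero entries only on the columns of its two endpoints, and both endpoints of every $e'\in E'$ lie in $V'$. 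Therefore $M$ has a nontrivial row dependence $\omega\in\mathbb{R}^{E'}$. Extending $\omega$ by zeros to $\mathbb{R}^E$ yields a row dependence of the full $R(G,q)$: rows outside $E'$ are killed by support, and rows inside $E'$ already vanish on the remaining columns. Picking $e\in E'$ with $\omega_e\neq 0$, removing $e$ from $G$ preserves $\rank R(G,q)$, again contradicting edge-minimality.

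There is no serious obstacle; the only delicate point is the verification that the restricted submatrix really coincides with $R(G',q_{G'})$ (so that Lemma~\ref{l:mixednormrank} can be invoked on the subframework) and the separate check that small $|V'|$ is absorbed by simplicity. Both are formal. Thus the whole theorem reduces to combining the rank formula of Lemma~\ref{l:mixednormrank} with the completely full hypothesis, in direct parallel with Theorem~\ref{thm_hypercyclinder_necessary}.
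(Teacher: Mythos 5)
Your proposal is correct and follows essentially the same strategy as the paper's proof: use Lemma~\ref{l:mixednormrank} to pin down the rank, deduce $|E|=3|V|-4$ from edge-minimality, and for the sparsity condition observe that simplicity forces $|V'|\geq 6$ so the completely full hypothesis applies to the subframework, producing a row dependence supported on $E'$ that extends by zeros to a dependence of $R(G,q)$. The only cosmetic difference is that you derive $|E|\geq 3|V|-4$ directly from the rank formula, while the paper argues the case $|E|<3|V|-4$ by contradiction via flexibility; these are equivalent.
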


\begin{proof} If $|E|<3|V|-4$, then the rigidity matrix
$R(G,q)$ has rank less than $3|V|-4$. Hence, by Lemma \ref{l:mixednormrank}, $(G,q)$ is
infinitesimally flexible, a contradiction. Now suppose $|E|>3|V|-4$. By Lemma \ref{l:mixednormrank},  
$R(G,q)$ has a non-trivial row dependence and hence there is an edge
whose removal does not decrease the rank of the
rigidity matrix. Thus $G-e$ is still infinitesimally rigid, a contradiction.

Similarly, if there is a non-trivial subgraph $G'=(V',E')$ with $|E'|> 3|V'|-4$, then,
by the simplicity of $G$, $|V'|\geq 6$.
Since $(G,q)$  is completely full, the subframework $(G',q_{G'})$ is full  in $\ell_{2,p}^3$. 
Thus, by Lemma \ref{l:mixednormrank},  the $|E'|\times 3|V'|$ submatrix of $R(G,q)$
corresponding to $G'$ has a non-trivial row dependence. Thus, there is an edge of $G'$ whose
 removal from $G$ leaves the framework infinitesimally rigid, a contradiction.
\end{proof}

It is open as to whether every $(3,4)$-tight graph can be realised as a minimally infinitesimally rigid framework in $\ell_{2,p}^3$, when $p\in (1,\infty)$ and $p\not=2$. However, we will now show that if $G$ is the graph of a doubly braced triangulation, then such a realisation of $G$ always exists.

\subsection{Minimal rigidity of doubly braced triangulations in $\ell_{2,p}^3$}

We first show that the irreducible base graphs given in Theorem~\ref{thm_irred_seven} can be realised as minimally infinitesimally rigid bar-joint frameworks in $\ell_{2,p}^3$ whenever $p\in (1,\infty)$ and $p\not=2$.

\begin{ex}
\label{e:baserigid1}
Consider the base graph $K_6-e$. 
Let $V(K_4-e)=\{s_1,s_2,s_3,s_4\}$ where $e=s_2s_4$ is the deleted edge. To obtain $K_6-e$ we cone $K_4-e$ with a vertex $v_0$ and the resulting graph with another vertex $v_1$. Note that $K_6-e$ is the underlying graph of the irreducible doubly braced triangulations given in Figures \ref{fig_doubly_braced_oct}, \ref{fig_hex_disjoint} and \ref{fig_hex_adjacent} (see also Figure \ref{K_6-e}).

\begin{figure}[ht]
        \centering

        \definecolor{ududff}{rgb}{0,0,0}
    \begin{tikzpicture}[line cap=round,line join=round,>=triangle 45,x=1cm,y=1cm,scale=0.9]
    
    \draw [line width=1.5pt] (-10.81,3.44)-- (-9.77,3.48);
    \draw [line width=1.5pt] (-9.77,3.48)-- (-10.25,2.44);
    \draw [line width=1.5pt] (-10.25,2.44)-- (-10.81,3.44);
		\draw [line width=1.5pt] (-10.25,2.44)-- (-13.39,1.4);
    \draw [line width=1.5pt] (-10.81,3.44)-- (-13.39,1.4);
    \draw [line width=1pt,blue] (-9.77,3.48)-- (-10.31,5.98);
    \draw [line width=1pt, red] (-9.77,3.48)-- (-7.29,1.5);
    \draw [line width=1pt, red] (-10.25,2.44)-- (-7.29,1.5);
    \draw [line width=1pt, red] (-10.81,3.44)-- (-7.29,1.5);
    \draw [line width=1pt,blue] (-10.25,2.44)-- (-10.31,5.98);
		\draw [line width=1pt,blue] (-10.31,5.98)-- (-13.39,1.4);
    \draw [line width=1pt, red] (-13.39,1.4)-- (-7.29,1.5);
    \draw [line width=1pt, red] (-7.29,1.5)-- (-10.31,5.98);
    \draw [line width=1pt,blue] (-10.81,3.44)-- (-10.31,5.98);
    \begin{scriptsize}
    \draw [fill=blue] (-10.31,5.98) circle (1.5pt);
		\draw [blue] (-10.31,6.19) node {$v_0$};
    \draw [fill=red] (-7.29,1.5) circle (1.5pt);
		\draw[red] (-7.05,1.5) node {$v_1$};
		\draw [fill=ududff] (-13.39,1.4) circle (1.5pt);
		\draw[color=ududff] (-13.6,1.4) node {$s_2$};
    \draw [fill=ududff] (-10.81,3.44) circle (1.5pt);
		\draw[color=ududff] (-11,3.54) node {$s_3$};
    \draw [fill=ududff] (-9.77,3.48) circle (1.5pt);
		\draw[color=ududff] (-9.55,3.54) node {$s_4$};
    \draw [fill=ududff] (-10.25,2.44) circle (1.5pt);
		\draw[color=ududff] (-10.25,2.25) node {$s_1$};
    \end{scriptsize}
    \end{tikzpicture}

    \caption{The base graph $K_6-e$.}
        \label{K_6-e}
\end{figure}
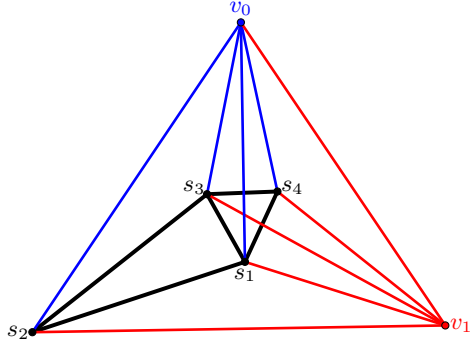

Let $q$ be the following placement: 
\[
s_1=(1,0,0),\quad s_2=(0,1,0),\quad s_3=(-1,0,0), \quad s_4=(0,-1,0),\]
\[v_0=(1,1,1), \quad v_1=(0,0,-1).\]
Then the rigidity matrix is of the form 
\[R(K_6-e,q)=\begin{bmatrix} A & 0\\ \ast & D \end{bmatrix},\]
where the submatrix $A$ contains the entries arising from the $x$ and $y$ coordinates of the edges of $K_4-e$ and is of the form
\[A =\kbordermatrix{ 
& (s_1,1) & (s_1,2)  & (s_2,1) & (s_2,2) & (s_3,1) & (s_3,2) & (s_4,1) & (s_4,2) \\
s_1s_2 & 1 & -1  & -1 & 1 &  0 & 0 &  0 & 0  \\
s_1s_3 & 2 & 0  & 0 & 0 & -2 & 0 & 0 & 0  \\
s_1s_4 & 1 & 1  & 0 & 0 & 0 & 0 & 1 & 1   \\
s_2s_3 & 0 & 0  & 1 & 1 & -1 & -1 &  0 & 0   \\
s_3s_4 & 0 & 0  & 0 & 0 & -1 & 1 & 1 & -1   \\
}\]
and $D$, which lies in $M_{9\times 10}(\mathbb{R})$, is given below;
\[D=\kbordermatrix{ 
& (s_1,3) & (s_2,3) & (s_3,3) & (s_4,3) & (v_0,1) & (v_0,2) & (v_0,3) & (v_1,1) & (v_1,2) & (v_1,3)\\
s_1v_0 & -1 & 0 & 0 & 0 & 0 & 1 & 1 & 0 & 0 & 0 \\
s_2v_0 & 0 & -1 & 0 & 0 & 1 & 0 & 1 & 0 & 0 & 0  \\
s_3v_0 & 0 & 0 & -\sqrt{5}^{2-p} & 0 & 2 & 1 & \sqrt{5}^{2-p} & 0 & 0 & 0  \\
s_4v_0 & 0 & 0 & 0 & -\sqrt{5}^{2-p} & 1 & 2 & \sqrt{5}^{2-p} & 0 & 0 & 0 \\
s_1v_1 & 1 & 0 & 0 & 0 & 0 & 0 & 0 & -1 & 0 & -1  \\
s_2v_1 & 0 & 1 & 0 & 0 & 0 & 0 & 0 & 0 & -1 & -1  \\
s_3v_1 & 0 & 0 & 1 & 0 & 0 & 0 & 0 & 1 & 0 & -1  \\
s_4v_1 & 0 & 0 & 0 & 1 & 0 & 0 & 0 & 0 & 1 & -1  \\
v_0v_1 & 0 & 0 & 0 & 0 & 1 & 1 & \sqrt{2}^p & -1 & -1 & -\sqrt{2}^p  
}\]

Since the rows of the matrix $A$ are evidently linearly independent, it suffices to show that the rows of the matrix $D$ are also linearly independent. In the remaining argument, the row operations will be indicated with the standard notation. For example, the fifth row of the matrix $D_1$ below is the sum of the first and the fifth row of the matrix $D$, so we write $R_5=r_5+r_1$.  
\[D_1=\resizebox{0.95\hsize}{!}{$\kbordermatrix{ 
& (s_1,3) & (s_2,3) & (s_3,3) & (s_4,3) & (v_0,1) & (v_0,2) & (v_0,3) & (v_1,1) & (v_1,2) & (v_1,3)\\
r_1 & -1 & 0 & 0 & 0 & 0 & 1 & 1 & 0 & 0 & 0 \\
r_2 & 0 & -1 & 0 & 0 & 1 & 0 & 1 & 0 & 0 & 0  \\
r_3 & 0 & 0 & -\sqrt{5}^{2-p} & 0 & 2 & 1 & \sqrt{5}^{2-p} & 0 & 0 & 0  \\
r_4 & 0 & 0 & 0 & -\sqrt{5}^{2-p} & 1 & 2 & \sqrt{5}^{2-p} & 0 & 0 & 0 \\
R_5= r_5+r_1 & 0 & 0 & 0 & 0 & 0 & 1 & 1 & -1 & 0 & -1  \\
R_6= r_6+r_2 & 0 & 0 & 0 & 0 & 1 & 0 & 1 & 0 & -1 & -1  \\
R_7= \sqrt{5}^{2-p}r_7+r_3 & 0 & 0 & 0 & 0 & 2 & 1 & \sqrt{5}^{2-p} & \sqrt{5}^{2-p} & 0 & -\sqrt{5}^{2-p}  \\
R_8= \sqrt{5}^{2-p}r_8+r_4 & 0 & 0 & 0 & 0 & 1 & 2 & \sqrt{5}^{2-p} & 0 & \sqrt{5}^{2-p} & -\sqrt{5}^{2-p}  \\
r_9 & 0 & 0 & 0 & 0 & 1 & 1 & \sqrt{2}^p & -1 & -1 & -\sqrt{2}^p  
}$}\]
 It is evident now that the first four rows of this matrix are linearly independent, so we may focus on the submatrix:
\[D_2=\kbordermatrix{ 
&  (v_0,1) & (v_0,2) & (v_0,3) & (v_1,1) & (v_1,2) & (v_1,3)\\
r_1  & 0 & 1 & 1 & -1 & 0 & -1  \\
r_2  & 1 & 0 & 1 & 0 & -1 & -1  \\
r_3 &2 & 1 & \sqrt{5}^{2-p} & \sqrt{5}^{2-p} & 0 & -\sqrt{5}^{2-p}  \\
r_4  & 1 & 2 & \sqrt{5}^{2-p} & 0 & \sqrt{5}^{2-p} & -\sqrt{5}^{2-p}  \\
r_5  & 1 & 1 & \sqrt{2}^p & -1 & -1 & -\sqrt{2}^p  
}\]
Next, we eliminate the matrix elements below the first entry in the main diagonal of the above matrix, and get the equivalent matrix 
\[D_3=\kbordermatrix{ 
&  (v_0,1) & (v_0,2) & (v_0,3) & (v_1,1) & (v_1,2) & (v_1,3)\\
R_1=r_2  & 1 & 0 & 1 & 0 & -1 & -1   \\
R_2=r_1  & 0 & 1 & 1 & -1 & 0 & -1  \\
R_3= r_3-2r_2 &0 & 1 & -2+\sqrt{5}^{2-p} & \sqrt{5}^{2-p} & 2 &2-\sqrt{5}^{2-p}  \\
R_4=r_4-r_2  & 0 & 2 & -1+\sqrt{5}^{2-p} & 0 & 1+\sqrt{5}^{2-p} & 1-\sqrt{5}^{2-p}  \\
R_5=r_5-r_2  & 0 & 1 & -1+\sqrt{2}^p & -1 & 0 & 1-\sqrt{2}^p  
}\]
 Thus, we can remove the first row and the first column. Working in a similar manner we obtain
\[D_4=\kbordermatrix{ 
&   (v_0,2) & (v_0,3) & (v_1,1) & (v_1,2) & (v_1,3)\\
r_1  & 1 & 1 & -1 & 0 & -1  \\
R_2=r_2+r_1  & 0 & 3-\sqrt{5}^{2-p} & -1-\sqrt{5}^{2-p} & -2 &-3+\sqrt{5}^{2-p}  \\
R_3=r_3+2r_1  & 0 & 3-\sqrt{5}^{2-p} & -2 & -1-\sqrt{5}^{2-p} &-3+\sqrt{5}^{2-p}  \\
R_4=r_4-r_1  & 0 & -2+\sqrt{2}^p & 0 & 0 & 2-\sqrt{2}^p  
}.\]
Note that the last row of the matrix $D_4$ becomes zero for $p=2$.
For $p\neq 2$, we remove again the first row and the first column and rearrange 
\[D_5=\kbordermatrix{ 
&  (v_0,3) & (v_1,1) & (v_1,2) & (v_1,3)\\
R_1=r_3 & -2+\sqrt{2}^p & 0 & 0 & 2-\sqrt{2}^p   \\
R_2=r_1-x r_3  & 0 & -1-\sqrt{5}^{2-p} & -2 &0  \\
R_3=r_2-xr_3 & 0 & -2 & -1-\sqrt{5}^{2-p} &0  
}\]
where $x=\frac{3-\sqrt{5}^{2-p}}{-2+\sqrt{2}^p}$. Thus, it suffices to show that the matrix
\[D_6=\kbordermatrix{ 
& (v_1,1) & (v_1,2) \\
R_1=r_2 & -1-\sqrt{5}^{2-p} & -2 \\
R_2=r_3 & -2 & -1-\sqrt{5}^{2-p}   
}\] 
has linearly independent rows, which is true for every $p\neq 2$.
\end{ex}

\begin{ex}
\label{e:baserigid2}
Consider the base graph $K_5 \cup_{K_3} K_5$. This graph can be obtained from $K_4-e$ by  repeatedly adding three degree 4 vertices (see Figure~\ref{fig_ireed_seven K53K5}). We denote  $V(K_4-e)=\{s_1,s_2,s_3,s_4\}$  and the extra vertices by $v_1,v_2,v_3$. Note that $K_5 \cup_{K_3} K_5$ is the underlying graph of the irreducible doubly braced triangulations  given in Figures \ref{fig_ireed_seven} and \ref{fig_ireed_seven_non}. The two $K_5$ subgraphs will be described by the respective vertex sets $\{s_1,s_2,s_3,v_1,v_2\}$ and 
$\{s_1,s_3,s_4,v_1,v_3\}$. The intersection of those subgraphs is the graph $K_3$ indicated by the dashed edges in Figure~\ref{fig_ireed_seven K53K5}.

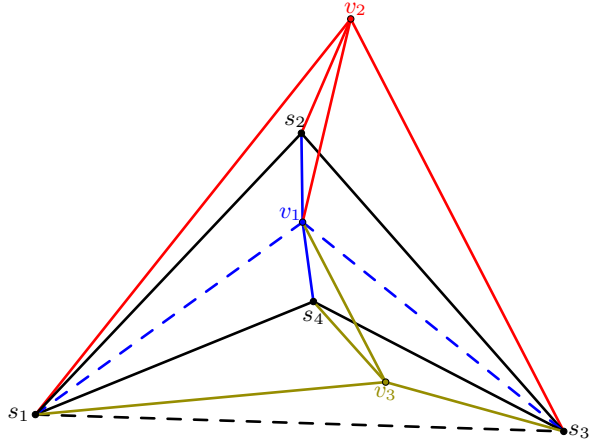
\begin{figure}[ht]
    \definecolor{ududff}{rgb}{0,0,0}
    \begin{tikzpicture}[line cap=round,line join=round,>=triangle 45,x=1cm,y=1cm,scale=0.8]
    \draw [line width=1pt,dash pattern=on 5pt off 5pt] (-3.32,-0.6)-- (5.46,-0.88);
    \draw [line width=1pt,red] (5.46,-0.88)-- (1.92,5.98);
    \draw [line width=1pt, red] (1.92,5.98)-- (-3.32,-0.6);
    \draw [line width=1pt,blue] (1.3,1.28)-- (1.12,2.6);
    \draw [line width=1pt,blue] (1.12,2.6)-- (1.1,4.08);
    \draw [line width=1pt,red] (1.1,4.08)-- (1.92,5.98);
    \draw [line width=1pt] (1.3,1.28)-- (-3.32,-0.6);
    \draw [line width=1pt,blue,dash pattern=on 5pt off 5pt] (1.12,2.6)-- (-3.32,-0.6);
    \draw [line width=1pt] (1.1,4.08)-- (-3.32,-0.6);
    \draw [line width=1pt] (1.1,4.08)-- (5.46,-0.88);
    \draw [line width=1pt,blue,dash pattern=on 5pt off 5pt] (1.12,2.6)-- (5.46,-0.88);
    \draw [line width=1pt] (1.3,1.28)-- (5.46,-0.88);
    \draw [line width=1pt, olive] (1.3,1.28)-- (2.5,-0.06);
    \draw [line width=1pt, olive] (2.5,-0.06)-- (-3.32,-0.6);
    \draw [line width=1pt, olive] (2.5,-0.06)-- (5.46,-0.88);
    \draw [line width=1pt,red] (1.12,2.6)-- (1.92,5.98);
    \draw [line width=1pt, olive] (1.12,2.6)-- (2.5,-0.06);
		    \begin{scriptsize}
    \draw [fill=ududff] (-3.32,-0.6) circle (1.5pt);
    \draw [fill=ududff] (5.46,-0.88) circle (1.5pt);
    \draw [fill=ududff] (1.3,1.28) circle (1.5pt);
    \draw [fill=ududff] (1.1,4.08) circle (1.5pt);
		\draw [fill=blue] (1.12,2.6) circle (1.5pt);
    \draw [fill=red] (1.92,5.98) circle (1.5pt);
		\draw [fill=olive] (2.5,-0.06) circle (1.5pt);
		\draw[color=ududff] (-3.6,-0.6) node {$s_1$};
		\draw[color=ududff] (5.72,-0.88) node {$s_3$};
		\draw[color=ududff] (1.3,1.02) node {$s_4$};
		\draw[color=ududff] (1,4.28) node {$s_2$};
		\draw[color=blue] (0.92,2.74) node {$v_1$};
		\draw[color=red] (2,6.15) node {$v_2$};
		\draw[color=olive] (2.5,-0.25) node {$v_3$};
    \end{scriptsize}
    \end{tikzpicture}
    \caption{The base graph $K_5 \cup_{K_3} K_5$.}
    \label{fig_ireed_seven K53K5}
\end{figure}

The placement $q$ of $V(K_4-e)$ is the same as in Example \ref{e:baserigid1}, while the vertices $v_1$, $v_2$, $v_3$  are placed at the respective points $(0,0,1)$, $(-1,1,-1)$, $(-1,-1,-1)$. Following the same procedure as in the previous example, the rigidity matrix of the framework $(K_5 \cup_{K_3} K_5,q)$ is a lower triangular block matrix
\[R(K_5 \cup_{K_3} K_5,q)=\begin{bmatrix} A & 0\\ \ast & D \end{bmatrix},\]
where the submatrix $A$ contains the entries arising from the $x$ and $y$ coordinates of the edges of $K_4-e$, and $D$ is the following matrix

\[D=\resizebox{0.95\hsize}{!}{$\kbordermatrix{ 
& (s_1,3) & (s_2,3) & (s_3,3) & (s_4,3) & (v_1,1) & (v_1,2) & (v_1,3) & (v_2,1) & (v_2,2) & (v_2,3)& (v_3,1) & (v_3,2) & (v_3,3)\\
s_1v_1 & -1 & 0 & 0 & 0 & -1 & 0 & 1 & 0 & 0 & 0 & 0 & 0 & 0 \\
s_2v_1 & 0 & -1 & 0 & 0 & 0 & -1 & 1 & 0 & 0 & 0 & 0 & 0 & 0  \\
s_3v_1 & 0 & 0 & -1 & 0 & 1 & 0 & 1 & 0 & 0 & 0 & 0 & 0 & 0  \\
s_4v_1 & 0 & 0 & 0 & -1 & 0 & 1 & 1 & 0 & 0 & 0 & 0 & 0 & 0 \\
s_1v_2 & \sqrt{5}^{2-p} & 0 & 0 & 0 & 0 & 0 & 0 & -2 & 1 & -\sqrt{5}^{2-p} & 0 & 0 & 0  \\
s_2v_2 & 0 & 1 & 0 & 0 & 0 & 0 & 0 & -1 & 0 & -1 & 0 & 0 & 0  \\
s_3v_2 & 0 & 0 & 1 & 0 & 0 & 0 & 0 & 0 & 1 & -1 & 0 & 0 & 0  \\
v_1v_2 & 0 & 0 & 0 & 0 & 1 & -1 & \sqrt{2}^{p} & -1 & 1 & -\sqrt{2}^{p} & 0 & 0 & 0  \\
s_1v_3 & \sqrt{5}^{2-p} & 0 & 0 & 0 & 0 & 0 & 0 & 0 & 0 & 0 & -2 & -1 & -\sqrt{5}^{2-p}  \\
s_3v_3 & 0 & 0 & 1 & 0 & 0 & 0 & 0 & 0 & 0 & 0 & 0 & -1 & -1  \\
s_4v_3 & 0 & 0 & 0 & 1 & 0 & 0 & 0 & 0 & 0 & 0 & -1 & 0 & -1  \\
v_1v_3 & 0 & 0 & 0 & 0 & 1 & 1 & \sqrt{2}^{p} & 0 & 0 & 0 & -1 & -1 & -\sqrt{2}^{p}  \\
}$}\]

It suffices to show again that the rows of the matrix $D$ are linearly independent. Performing row operations  in order to eliminate the subdiagonal elements of the first four columns, we obtain the equivalent matrix 
\[\begin{bmatrix} D_1 & D_2\\  0 & E \end{bmatrix}\]
where the blocks $D_1, D_2$ form the first 4 rows of the matrix $D$ and $E$ is given by

\[E=\kbordermatrix{ 
 & (v_1,1) & (v_1,2) & (v_1,3) & (v_2,1) & (v_2,2) & (v_2,3)& (v_3,1) & (v_3,2) & (v_3,3)\\
r_1  & -\sqrt{5}^{2-p} & 0 & \sqrt{5}^{2-p} & -2 & 1 & -\sqrt{5}^{2-p} & 0 & 0 & 0  \\
r_2  & 0 & -1 & 1 & -1 & 0 & -1 & 0 & 0 & 0  \\
r_3  & 1 & 0 & 1 & 0 & 1 & -1 & 0 & 0 & 0  \\
r_4  & 1 & -1 & \sqrt{2}^{p} & -1 & 1 & -\sqrt{2}^{p} & 0 & 0 & 0  \\
r_5  & -\sqrt{5}^{2-p} & 0 & \sqrt{5}^{2-p} & 0 & 0 & 0 & -2 & -1 & -\sqrt{5}^{2-p}  \\
r_6  & 1 & 0 & 1 & 0 & 0 & 0 & 0 & -1 & -1  \\
r_7  & 0 & 1 & 1  & 0 & 0 & 0 & -1 & 0 & -1  \\
r_8  & 1 & 1 & \sqrt{2}^{p} & 0 & 0 & 0 & -1 & -1 & -\sqrt{2}^{p}  
}\]

We work now simultaneously on 2 different blocks of $E$, the first one is formed by the first 4 rows of $E$ and the second one is given from the remaining rows.

\[E_1=\resizebox{0.95\hsize}{!}{$\kbordermatrix{ 
 & (v_1,1) & (v_1,2) & (v_1,3) & (v_2,1) & (v_2,2) & (v_2,3)& (v_3,1) & (v_3,2) & (v_3,3)\\
R_1=-r_2 & 0 & 1 & -1 & 1 & 0 & 1 & 0 & 0 & 0  \\
R_2=r_1-2r_2  & -\sqrt{5}^{2-p} & 2 & -2+\sqrt{5}^{2-p} & 0 & 1 & 2-\sqrt{5}^{2-p} & 0 & 0 & 0  \\
r_3 & 1 & 0 & 1 & 0 & 1 & -1 & 0 & 0 & 0  \\
R_4=r_4-r_2  & 1 & 0 & -1+\sqrt{2}^{p} & 0 & 1 & 1-\sqrt{2}^{p} & 0 & 0 & 0  \\
R_5=-r_7  & 0 & -1 & -1  & 0 & 0 & 0 & 1 & 0 & 1  \\
R_6=r_6  & 1 & 0 & 1 & 0 & 0 & 0 & 0 & -1 & -1  \\
R_7=r_5-2r_7  &-\sqrt{5}^{2-p} & -2 & -2+\sqrt{5}^{2-p} & 0 & 0 & 0 & 0 & -1 & 2-\sqrt{5}^{2-p}  \\
R_8=r_8-r_7  & 1 & 0 & -1+\sqrt{2}^{p} & 0 & 0 & 0 & 0 & -1 & 1-\sqrt{2}^{p}  \\
}$}\]
Hence we may remove the  first and the fifth row and the columns $(v_2,1)$ and $(v_3,1)$ to obtain the matrix $E_2$
\[E_2=\kbordermatrix{ 
 & (v_1,1) & (v_1,2) & (v_1,3) & (v_2,2) & (v_2,3) & (v_3,2) & (v_3,3)\\
r_1  & -\sqrt{5}^{2-p} & 2 & -2+\sqrt{5}^{2-p} & 1 & 2-\sqrt{5}^{2-p} & 0 & 0  \\
r_2  & 1 & 0 & 1 & 1 & -1 & 0 & 0 \\
r_3   & 1 & 0 & -1+\sqrt{2}^{p} & 1 & 1-\sqrt{2}^{p} & 0 & 0  \\
r_4  & 1 & 0 & 1 & 0 & 0 & -1 & -1  \\
r_5  &-\sqrt{5}^{2-p} & -2 & -2+\sqrt{5}^{2-p} & 0 & 0 & -1 & 2-\sqrt{5}^{2-p}   \\
r_6 & 1 & 0 & -1+\sqrt{2}^{p} & 0 & 0  & -1 & 1-\sqrt{2}^{p}  \\
}\]

We continue with the following row operations:

\[E_3=\resizebox{0.9\hsize}{!}{$\kbordermatrix{ 
& (v_1,1) & (v_1,2) & (v_1,3) & (v_2,2) & (v_2,3) & (v_3,2) & (v_3,3)\\
R_1=r_2  & 1 & 0 & 1 & 1 & -1 & 0 & 0\\
R_2=r_1-r_2  & -1-\sqrt{5}^{2-p} & 2 & -3+\sqrt{5}^{2-p}  & 0 & 3-\sqrt{5}^{2-p} & 0 & 0  \\
R_3=r_3-r_2  & 0 & 0 & -2+\sqrt{2}^{p} & 0 & 2-\sqrt{2}^{p}  & 0 & 0  \\
R_4=r_4  & 1 & 0 & 1 & 0 & 0 & -1 & -1  \\
R_5=r_5-r_4  & -1-\sqrt{5}^{2-p} & -2 & -3+\sqrt{5}^{2-p} & 0 & 0  & 0 & 3-\sqrt{5}^{2-p}  \\
R_6=r_6-r_4  & 0 & 0 & -2+\sqrt{2}^{p} & 0 & 0  & 0 & 2-\sqrt{2}^{p}  \\
}$}\]
Again note that for $p=2$ all the entries of the rows $R_3$ and $R_6$ of $E_3$ are equal to zero, so the matrix fails to have independent rows. For $p\neq 2$, it suffices to show that the rows of the matrix $E_4$, given below, are linearly independent.
\[E_4=\kbordermatrix{ 
& (v_1,1) & (v_1,2) & (v_1,3) & (v_2,3)  & (v_3,3)\\
r_1  & -1-\sqrt{5}^{2-p} & 2 & -3+\sqrt{5}^{2-p}  & 3-\sqrt{5}^{2-p}  & 0  \\
r_2 & 0 & 0 & -2+\sqrt{2}^{p}  & 2-\sqrt{2}^{p}   & 0  \\
r_3 & -1-\sqrt{5}^{2-p} & -2 & -3+\sqrt{5}^{2-p} & 0  & 3-\sqrt{5}^{2-p}  \\
r_4 & 0 & 0 & -2+\sqrt{2}^{p} & 0 & 2-\sqrt{2}^{p}  \\
}.\]
Define again $x=\frac{3-\sqrt{5}^{2-p}}{-2+\sqrt{2}^p}$. Since the equivalent matrix 
\[E_5=\kbordermatrix{ 
& (v_1,1) & (v_1,2) & (v_1,3) & (v_2,3)  & (v_3,3)\\
R_1=r_1+xr_2  &  -1-\sqrt{5}^{2-p} & 2 & 0  & 0  & 0  \\
r_2 & 0 & 0 & -2+\sqrt{2}^{p}  & 2-\sqrt{2}^{p}   & 0  \\
R_3=r_3+xr_4 & -1-\sqrt{5}^{2-p} & -2 & 0 & 0  & 0  \\
r_4 & 0 & 0 & -2+\sqrt{2}^{p} & 0 & 2-\sqrt{2}^{p}  \\
}.\]
has evidently linearly independent rows, it follows that the framework $(K_5 \cup_{K_3} K_5,q)$ is infinitesimally rigid.

\end{ex}

We now recall the following result.

\begin{prop}{\cite[Proposition 4.7]{dkn}}
\label{p:vsrigid}
Let $X$ be a strictly convex and smooth finite dimensional real normed linear space with dimension $d$.
Suppose $G'$ is a graph which is obtained from $G$ by a $d$-dimensional vertex splitting move. 
If there exists $q$ such that $(G,q)$ is (minimally) infinitesimally rigid in $X$ then there exists $q'$ such that $(G',q')$ is (minimally) infinitesimally rigid in $X$.
\end{prop}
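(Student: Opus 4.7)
The plan is to adapt the limit argument from Proposition~\ref{prop_split_rigid} to an arbitrary smooth strictly convex $d$-dimensional normed space $X$. Let $G'=(V',E')$ be obtained from $G=(V,E)$ by a $d$-dimensional vertex split at $v_1$ on the edges $v_1v_2,\dots,v_1v_d$, introducing a new vertex $v_0$ adjacent to $v_1,\dots,v_d$. Given an infinitesimally rigid framework $(G,q)$, I would define $q^\epsilon\in X^{V'}$ by $q^\epsilon_v=q_v$ for $v\in V$ and $q^\epsilon_{v_0}=q_{v_1}+\epsilon u$ for a unit vector $u\in X$ to be chosen and a small $\epsilon>0$. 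The goal is to show that $(G',q^\epsilon)$ is infinitesimally rigid for all sufficiently small $\epsilon$.

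Since $X$ is smooth, the norm has a derivative $\varphi_x\in X^*$ at each nonzero $x$, and by homogeneity $\varphi_{tx}=\sgn(t)\varphi_x$ for $t\ne 0$. By a small perturbation of $q$ (preserving infinitesimal rigidity, which is an open condition on regular placements), I would arrange that the $d-1$ functionals $\varphi_{q_{v_1}-q_{v_i}}$ for $i=2,\dots,d$ are linearly independent in $X^*$. Then, exploiting that the duality map is a homeomorphism between the unit spheres of $X$ and $X^*$ under smoothness and strict convexity, I would choose $u$ so that $\varphi_u$ extends these $d-1$ functionals to a basis of $X^*$.

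Now suppose for contradiction that $(G',q^\epsilon)$ is infinitesimally flexible along a sequence $\epsilon_k\to 0$. Pick unit infinitesimal flexes $m^k$ of $(G',q^{\epsilon_k})$ normalised to have zero projection onto the space of trivial flexes, and extract a convergent subsequence $m^k\to m$. A continuity argument for the trivial-flex projection, analogous to Lemma~\ref{l:proj} and based on an explicit continuous basis of $\mathcal{T}(X)$ arising from translations and the one-parameter subgroups of $\Isom_0(X)$, ensures that $m$ is a unit flex of the degenerate framework $(G',q^0)$ with $q^0_{v_0}=q_{v_1}$, and that $m$ has zero trivial component for this framework.

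The crux is to show $m_{v_0}=m_{v_1}$. For each $i\in\{2,\dots,d\}$, both edges $v_0v_i$ and $v_1v_i$ lie in $E'$; taking the limit of their length constraints and subtracting yields $\varphi_{q_{v_1}-q_{v_i}}(m_{v_0}-m_{v_1})=0$. For the edge $v_0v_1$, the constraint at $q^{\epsilon_k}$ reduces by homogeneity to $\varphi_u(m^k_{v_0}-m^k_{v_1})=0$, and hence $\varphi_u(m_{v_0}-m_{v_1})=0$ in the limit. By the choice of $u$, these $d$ functionals span $X^*$, forcing $m_{v_0}=m_{v_1}$. Consequently $(m_v)_{v\in V}$ is an infinitesimal flex of $(G,q)$ and thus trivial, so $m_v=\eta(q_v)$ for some $\eta\in\mathcal{T}(X)$; since $q^0_{v_0}=q_{v_1}$, the full vector $m$ is then a trivial flex of $(G',q^0)$, contradicting its unit norm and vanishing trivial component. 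I expect the main obstacle to be establishing the continuity of the trivial-flex projection in the abstract normed setting, since $\mathcal{T}(X)$ need not admit as clean a description as in the Euclidean or hypercylinder cases, so some care is required to produce a continuous basis of trivial flexes that behaves well as the placement degenerates.
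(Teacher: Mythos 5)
The paper does not prove this proposition itself; it is cited verbatim from \cite{dkn}, so there is no internal proof to compare against. Your proposal is, however, the expected adaptation of the paper's own limit argument for Proposition~\ref{prop_split_rigid} (the hypercylinder case) to the general smooth strictly convex setting, with the Euclidean inner products replaced by the support functionals $\varphi_x$ of the norm. The core mechanics check out: since $\varphi_{tx}=\sgn(t)\varphi_x$, the edge $v_0v_1$ constraint at $q^{\epsilon_k}$ really does reduce to $\varphi_u(m^k_{v_0}-m^k_{v_1})=0$; the pairs $v_0v_i,\,v_1v_i$ give $\varphi_{q_{v_1}-q_{v_i}}(m_{v_0}-m_{v_1})=0$ after the limit; smoothness and strict convexity make the duality map a homeomorphism $S_X\to S_{X^*}$, so a generic small perturbation of $q$ (staying in the open set of rigid placements) makes $\varphi_{q_{v_1}-q_{v_2}},\dots,\varphi_{q_{v_1}-q_{v_d}}$ independent and then $u$ can be chosen to complete a basis of $X^*$; and finally $m_{v_0}=m_{v_1}$ lets you descend $m$ to a flex of $(G,q)$ exactly as in the hypercylinder proof, since any replaced edge $v_0x$ contributes the same limiting constraint as $v_1x$.

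The obstacle you flag --- continuity of the trivial-flex projection $P_q$ as $q^{\epsilon_k}\to q^0$ --- is genuine but more tractable than you suggest. Since $\mathcal{T}(X)$ is a fixed finite-dimensional subspace of the affine maps $X\to X$, pick once and for all a basis $\eta_1,\dots,\eta_k$; then $(\eta_j(q_v))_{v\in V'}$, $j=1,\dots,k$, is a basis of $\mathcal{T}(q)\subset X^{V'}$ at every full placement $q$ and depends continuously (indeed affinely) on $q$. The degenerate placement $q^0$ is full for $G'$ because its point set $\{q^0_v:v\in V'\}$ equals $\{q_v:v\in V\}$, the point set of the given rigid (hence full) framework $(G,q)$; openness of fullness then makes $q^{\epsilon_k}$ full for large $k$. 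So $P_{q^{\epsilon_k}}(m^k)\to P_{q^0}(m)$ exactly as in Lemma~\ref{l:proj}, and the contradiction goes through. One small omission: you do not address the parenthetical \emph{minimally} in the statement. This follows from an edge count: if $(G,q)$ is minimally rigid then $|E|=d|V|-\dim\mathcal{T}(X)$, and since a $d$-dimensional vertex split adds $d$ edges and one vertex, $|E'|=d|V'|-\dim\mathcal{T}(X)$; once $(G',q')$ is shown rigid, the rigidity matrix has $|E'|$ independent rows, so deleting any edge destroys rigidity. With these two points filled in, your argument is a correct proof.
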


A framework $(G,q)$ in $\ell_{2,p}^3$ is said to be {\em regular}  if the function  $$r_G:\left(\ell_{2,p}^3\right)^V\to \mathbb{N},\quad x\mapsto \rank R(G,x),$$ achieves its maximum value at $q$. 
Note that if $(G,q)$ is infinitesimally rigid in $\ell_{2,p}^3$ for some regular placement $q$ then every regular placement of $G$ in $\ell_{2,p}^3$ is infinitesimally rigid.
In this case, we say that the graph $G$ is rigid in $\ell_{2,p}^3$.

\begin{thm}
\label{t:norms}
 Let \( G \) be the graph of a doubly braced triangulation and let $p\in(1,\infty)$, $p\not=2$. 
		Then  \( G  \) is (minimally) rigid in $\ell_{2,p}^3$.
\end{thm}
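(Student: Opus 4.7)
The plan is to assemble the result directly from the machinery that has already been set up in the paper, in essentially the same way that Theorem~\ref{t:hypercylinder} was proven for the hypercylinder. The only new ingredients I need are: (a) the combinatorial inductive construction from Section~\ref{sec_b2}, (b) the two base case computations in Examples~\ref{e:baserigid1} and \ref{e:baserigid2}, (c) the fact that $\ell_{2,p}^3$ is smooth and strictly convex, and (d) the vertex-splitting preservation result, Proposition~\ref{p:vsrigid}.

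First, I will invoke Theorem~\ref{thm_splitting_version} to express the given doubly braced triangulation $G$ as the result of a finite sequence of topological vertex splitting moves applied to one of the five irreducible base graphs depicted in Figures~\ref{fig_doubly_braced_oct}--\ref{fig_ireed_seven_non}. Since a topological vertex splitting move is a special case of a $3$-dimensional vertex splitting move in the sense of Whiteley (with the two common neighbours given by the endpoints of the contracted facial edge, plus one of the two vertices of the unique non-facial triangle through the new edge), the inductive step only requires that $3$-dimensional vertex splitting preserves (minimal) infinitesimal rigidity in $\ell_{2,p}^3$.

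Next, by Lemma~\ref{lem:smooth}, the normed space $\ell_{2,p}^3$ is smooth and strictly convex, so the hypotheses of Proposition~\ref{p:vsrigid} (with $d=3$) are fulfilled. Consequently, if the underlying graph of some doubly braced triangulation admits a placement in $\ell_{2,p}^3$ that is minimally infinitesimally rigid, then every graph obtained from it by a topological vertex split also admits such a placement, and iterating along the construction sequence provided by Theorem~\ref{thm_splitting_version} yields a minimally infinitesimally rigid placement of $G$.

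It remains to handle the base cases. The underlying graphs of the five irreducibles are, as observed in Section~\ref{sec_b2}, exactly the two graphs $K_6-e$ and $K_5\cup_{K_3}K_5$. Examples~\ref{e:baserigid1} and \ref{e:baserigid2} exhibit explicit placements $q$ of each of these two graphs in $\ell_{2,p}^3$ (for every $p\in(1,\infty)$ with $p\neq 2$) whose rigidity matrices have rank equal to the number of edges; by Lemma~\ref{l:mixednormrank} (together with Proposition~\ref{prop:fullsetsl2p3} applied to the chosen placements, whose projections to the $xy$-plane contain two linearly independent vectors, so they are full) these placements are infinitesimally rigid, and the edge count $3|V|-4$ ensures minimality. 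This, together with the inductive step above, establishes rigidity of $G$ for some regular placement and hence, by the standard openness argument recalled immediately before the theorem, $G$ is (minimally) rigid in $\ell_{2,p}^3$.

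The only genuinely nontrivial ingredients are the two base-case matrix rank calculations (already dispatched in Examples~\ref{e:baserigid1} and \ref{e:baserigid2}, which require $p\neq 2$ in a critical way at one pivoting step) and the inductive preservation lemma, Proposition~\ref{p:vsrigid}; once these are in hand the proof is essentially a bookkeeping exercise, and I expect no significant obstacle beyond verifying that the topological vertex splits produced by Theorem~\ref{thm_splitting_version} do indeed instantiate the more general $3$-dimensional vertex split to which Proposition~\ref{p:vsrigid} applies.
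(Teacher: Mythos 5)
Your proof is correct and follows essentially the same route as the paper's own argument: reduce to the two base graphs via Theorem~\ref{thm_splitting_version}, dispatch those via the explicit rank computations of Examples~\ref{e:baserigid1} and~\ref{e:baserigid2}, and propagate rigidity along the inductive construction using Proposition~\ref{p:vsrigid} together with the smoothness and strict convexity established in Lemma~\ref{lem:smooth}. Your parenthetical description of which three neighbours are distinguished in the vertex split is slightly off (the two distinguished edges run to the apices of the two triangular faces incident to the contracted edge, not to endpoints of that edge), but since the paper already records that topological vertex splitting is a special case of $3$-dimensional vertex splitting this does not affect the argument.
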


\proof
Let $G$ be the graph of a doubly braced triangulation. By Theorem~\ref{thm_irred_seven},
  $G$ can be constructed from the graph of one of the irreducible doubly braced triangulations by a sequence of 3-dimensional vertex splitting moves.
Examples \ref{e:baserigid1} and \ref{e:baserigid2} show that the  graphs of these irreducible doubly braced triangulations have an infinitesimally rigid placement in $\ell_{2,p}^3$. (In fact these placements are minimally infinitesimally rigid since they have exactly $3|V|-4$ edges.)  By Lemma \ref{lem:smooth}, $\ell_{2,p}^3$ is strictly convex and smooth  for all $p\in(1,\infty)$. Thus the result follows from Proposition \ref{p:vsrigid}.
\endproof


\printbibliography

\end{document}